\newtheorem{lemma}{Lemma}
\newtheorem*{lemma*}{Lemma}
\newtheorem{theorem}{Theorem}
\newtheorem*{theorem*}{Theorem}
\newtheorem{corollary}{Corollary}
\newtheorem{definition}{Definition}
\newtheorem*{definition*}{Definition}
\newtheorem{prop}{Proposition}
\DeclareMathOperator{\supp}{supp}
\title{Strong Jumps and Lagrangians of Non-Uniform Hypergraphs}
\author{
Travis Johnston \thanks{University of South Carolina, Columbia, SC 29208, ({\tt j.travis.johnston@gmail.com}).}
\and Linyuan Lu \thanks{University of South Carolina, Columbia, SC
  29208, ({\tt lu@math.sc.edu}).
This author was supported in part by NSF grant  DMS 1300547 and ONR N00014-13-1-0717. }
}
\begin{document}

\maketitle

\begin{abstract}
  The hypergraph jump problem and the study of Lagrangians of uniform
  hypergraphs are two classical areas of study in the extremal graph
  theory.  In this paper, we refine the concept of jumps
  to strong jumps and consider the analogous problems over non-uniform
  hypergraphs.  Strong jumps have rich topological and algebraic
  structures.    The non-strong-jump values are
  precisely the densities of the hereditary properties, which include
  the Tur\'an densities of families of hypergraphs as special cases.
  Our method uses a generalized Lagrangian for non-uniform
  hypergraphs.  We also classify all strong jump values for
  $\{1,2\}$-hypergraphs.

\noindent{\em AMS classifications}: 05D05, 05C65, 05D40 \\
{\em Keywords}: Lagrangian, $R$-graph, non-uniform hypergraph, strong jump, non-jump, weak
jump, Tur\'an density, hereditary property
\end{abstract}

\section{Introduction}

The problem of determining jump values, or non-jump values, is intimately related to determining Tur\'an densities of hypergraphs.

\begin{definition}
	A real number $\alpha$ is a \textbf{jump} for a positive integer $r$ if there exists a $c>0$ such that for every $\epsilon>0$ and every $t\geq r$
	there exists an integer $n_{0}(\alpha, r, t, \epsilon)$ such that if $n\geq n_{0}$ and $G$ is an $r$-uniform hypergraph on $n$ vertices with edge
	density at least $\alpha+\epsilon$ then $G$ contains a subgraph $H$ on $t$ vertices with edge density at least $\alpha+c$.
\end{definition}

\noindent Erd\H{o}s observed that all values in $[0,1)$ are  jumps for $2$ and all values in $[0,\frac{r!}{r^r})$ are jumps for $r$.
He asked whether all values in $[0,1)$ are jumps for any $r\geq 2$--this was known as the jumping constant conjecture.
The question was answered negatively by Frankl and R\"odl in 1984 \cite{FR84},
who showed that $1-\frac{1}{l^{r-2}}$ is a non-jump for every $r\geq 3$ and $l>2r$. 
Since then, several pairs $(\alpha,r)$ of jumps/non-jumps have been identified \cite{baber2, FrPeRoTa07}.

In this paper, we consider the jump problem for general hypergraphs.
A hypergraph $H$ is a pair $(V,E)$ where $V$ is the vertex set and $E\subseteq 2^V$ is the edge set.
The set of edge sizes (or edge types) of $H$ is denoted by $R(H) := \{|e| : e \in E\}$.
If $R(H) = \{r\}$ then $H$ is an $r$-uniform hypergraph, or $r$-graph for short.
Given a fixed set $R$, we say a hypergraph $H$ is an \textbf{$R$-graph} if $R(H)\subseteq R$.
Notationally, we use the superscript $R$ to denote the edge types and the subscript $n$ to denote the number of vertices.
For example, $G_n^R$ denotes an $R$-graph $G$ on $n$ vertices; we may drop the $R$ or $n$ when either is understood under context.

In our previous paper \cite{tnuh}, we introduced the use of the Lubell function to measure the edge-density of a non-uniform hypergraph.
If $H$ is an $R$-graph on $n$ vertices, then the Lubell function of $H$ is the following: $h_{n}(H)=\sum_{e\in E(H)} \frac{1}{{n \choose |e| }}$.
The Lubell function is widely used in the study of extremal poset problems \cite{AxeManMar, BHLL, Bukh, GriLi, GriLu, GriLiLu, crowns}. 
Using the Lubell function, we defined the Tur\'an density of a family of $R$-graphs $\mathcal{H}$ is defined as $\pi(\mathcal{H}) := \lim_{n\to\infty} \pi_n(\mathcal{H})$
where $\pi_n (\mathcal{H})$ is the maximum of $h_n(G)$ among $\mathcal{H}$-free $R$-graphs $G$ on $n$ vertices.
We may occasionally write $\pi^{R}(\mathcal{H})$ when we emphasize that $R(\mathcal{H})$ may be a proper subset of $R$.
We proved that this limit always exists.
When $\mathcal{H}$ is a family of $r$-uniform hypergraphs the definition is the same as the classical definition.
Additionally, we demonstrated that there is a natural non-uniform generalization of the supersaturation lemma.
Because supersaturation is also observed in non-uniform hypergraphs, we also prove that blowing-up a non-uniform hypergraph doesn't change its Tur\'an density.
We also determine all the Tur\'an densities of a single $\{1, 2\}$-hypergraph.
These densities are always one of the following numbers
$$\bigg\{1,\frac{9}{8}, \frac{5}{4}, \frac{3}{2}, \frac{5}{3}, \ldots, 2-\frac{1}{k},\ldots \bigg \}$$
provided that the $\{1,2\}$-hypergraph is not also a $\{1\}$-graph or a $\{2\}$-graph.

The question of whether hypergraphs jump makes sense for $R$-graphs as well.
Using the Lubell function $h_n(H)$ as the measure of the edge density of the hypergraph $H$, one can easily extend the definition of jump to $R$-graphs.

\begin{definition}
	The value $\alpha\in [0, |R|]$ is a \textbf{jump} for $R$ if there
	exists a $c>0$ such that for every $\epsilon>0$ and every $t\geq \max \{r:r\in R\}$
	there exists an integer $n_0:=n_{0}(R, \alpha, \epsilon, t)$
	such that if $n\geq n_{0}$ and $G_{n}$ is an $R$-graph on $n$
	vertices with $h_{n}(G_{n})\geq \alpha+\epsilon$ then
	there exists a subgraph $H_t$ of $G_n$ on $t$ vertices with
	$h_{t}(H_{t})\geq \alpha+c$.
\end{definition}

\noindent Knowing whether a value $\alpha$ is a jump or not for $r$ rarely gives much information regarding Tur\'an densities.
One notable exception is the following example.
Suppose that one is trying to determine the Tur\'an density of a family of $R$-graphs $\mathcal{H}$.
It is known that $\alpha$ is a jump for $R$ with jumping constant $c$.
If it can be shown that $\pi(\mathcal{H})<\alpha+c$ then the fact that $\alpha$ is a jump implies that $\pi(\mathcal{H})\leq \alpha$.
This observation may be particularly helpful if when using a method such as Razborov's flag algebras.

In this paper, we introduce a new concept called strong jumps.

\begin{definition}
	The value $\alpha\in [0, |R|]$ is a \textbf{strong jump} for $R$ if there
	exists $c>0$ such that for every $t\geq	\max \{r:r\in R\}$ there exists an integer $n_0:=n_{0}(R, \alpha, c, t)$
	such that if $n\geq n_{0}$ and $G_{n}$ is an $R$-graph on $n$ vertices with $h_{n}(G_{n})\geq \alpha-c$ then
	there exists a subgraph $H_t$ of $G_n$ on $t$ vertices with $h_{t}(H_{t})\geq \alpha+c$.
\end{definition}

\noindent If a value $\alpha$ is a strong jump for $R$ it is also a jump for $R$;
the converse statement is not true.
A value $\alpha$ is a {\em weak jump} for $R$ if it is a jump but not strong jump. 
Refining the notion of jumps in this way turns out to have several nice consequences.
For example, the set of all strong jumps forms an open set (see Proposition \ref{open}).
Its complement, the set of not strong-jump values, has an algebraic structure (see Theorem \ref{t:add}) and is closely related to Tur\'an density.
We will show that $0$ is always a jump for $R$.
Furthermore, $0$ cannot be a strong jump; hence it is a weak jump.
Notice that $|R|$ is a weak jump for $R$; this is a degenerate case of the definition of jump.

\begin{theorem}\label{t:add}
	Consider any two finite sets $R_1$ and $R_2$ of non-negative integers.
	Suppose that $R_1\cap R_2=\emptyset$ and $R=R_1\cup R_2$. 
	If $\alpha_1$ is not a strong jump for $R_1$ and $\alpha_2$ is not a strong jump for $R_2$,
	then $\alpha_1+\alpha_2$ is not a strong jump for $R_1\cup R_2$.
\end{theorem}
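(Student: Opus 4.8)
The plan is, for every $c>0$, to exhibit a single $R$-graph that witnesses the failure of the strong-jump condition for $\alpha:=\alpha_1+\alpha_2$ at that $c$, by superposing an $R_1$-graph that witnesses that $\alpha_1$ is not a strong jump with an $R_2$-graph that witnesses that $\alpha_2$ is not a strong jump, both placed on a common vertex set. The algebraic fact that makes this work is that since $R_1\cap R_2=\emptyset$, if $G^{(1)}$ is an $R_1$-graph and $G^{(2)}$ an $R_2$-graph on the same vertex set $[n]$, then $G^{(1)}\cup G^{(2)}$ is an $R$-graph whose edge set is the \emph{disjoint} union $E(G^{(1)})\sqcup E(G^{(2)})$ (no collisions are possible, as the edge sizes differ), so $h_n(G^{(1)}\cup G^{(2)})=h_n(G^{(1)})+h_n(G^{(2)})$; and the same type-splitting applies verbatim to every induced subgraph.

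A preparatory ingredient is the standard averaging identity: for an $R$-graph $G$ on $n$ vertices and $\max R\le s\le n$, the mean of $h_s(H)$ over the $\binom{n}{s}$ subgraphs $H$ induced on $s$-element subsets of $V(G)$ equals $h_n(G)$, since each edge $e$ contributes $\binom{n-|e|}{s-|e|}\big/\big(\binom{n}{s}\binom{s}{|e|}\big)=1/\binom{n}{|e|}$ to the average. Writing $\lambda^{(s)}(G):=\max\{h_s(H):H\subseteq G,\ |V(H)|=s\}$ for the largest Lubell value of an $s$-vertex subgraph, it follows that $\lambda^{(s)}(G)$ is nonincreasing in $s$ and is always at least $h_n(G)$. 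I would then use this to upgrade each hypothesis to a form uniform in $n$: if $\alpha_i$ is not a strong jump for $R_i$, then for every $c>0$ there exist $t_i\ge\max R_i$ and $N_i$ such that for every $n\ge N_i$ there is an $R_i$-graph $G^{(i)}_n$ on $[n]$ with $h_n(G^{(i)}_n)\ge\alpha_i-c$ and $\lambda^{(t_i)}(G^{(i)}_n)<\alpha_i+c$. Indeed, negating the definition of strong jump directly yields such a $t_i$ together with witnesses along some infinite set of values of $n$ only; but given an arbitrary target $n\ge\max R_i$, one picks a witness $G_{n^*}$ on $n^*\ge n$ vertices and passes, by the averaging identity, to an induced subgraph on $n$ vertices whose Lubell value is at least $h_{n^*}(G_{n^*})\ge\alpha_i-c$ — its $t_i$-vertex subgraphs are $t_i$-vertex subgraphs of $G_{n^*}$, hence still have Lubell value $<\alpha_i+c$.

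For the main step, fix $c>0$, apply the upgraded hypotheses with parameter $c/2$ to obtain $(t_1,N_1)$ and $(t_2,N_2)$, and set $t:=\max\{t_1,t_2\}$ (which is $\ge\max R$) and $N:=\max\{N_1,N_2,t\}$. For each $n\ge N$, let $G_n:=G^{(1)}_n\cup G^{(2)}_n$ on $[n]$; then $h_n(G_n)=h_n(G^{(1)}_n)+h_n(G^{(2)}_n)\ge(\alpha_1-\tfrac c2)+(\alpha_2-\tfrac c2)=\alpha-c$. If $H$ is any subgraph of $G_n$ induced on $t$ vertices and $H=H^{(1)}\cup H^{(2)}$ is its type-split, then each $H^{(i)}$ is a $t$-vertex subgraph of $G^{(i)}_n$, so $h_t(H^{(i)})\le\lambda^{(t)}(G^{(i)}_n)\le\lambda^{(t_i)}(G^{(i)}_n)<\alpha_i+\tfrac c2$ (the middle inequality by monotonicity of $\lambda^{(\cdot)}$, since $t\ge t_i$), and therefore $h_t(H)=h_t(H^{(1)})+h_t(H^{(2)})<\alpha+c$. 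Thus, for this $c$ and this $t$, every $n\ge N$ furnishes an $R$-graph $G_n$ with $h_n(G_n)\ge\alpha-c$ all of whose $t$-vertex subgraphs have Lubell value $<\alpha+c$ — which is precisely the negation of the strong-jump condition. As $c>0$ was arbitrary, $\alpha_1+\alpha_2$ is not a strong jump for $R$.

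The step I expect to be the real obstacle is the quantifier mismatch underlying the ``upgrade'' above: the bare negation of ``strong jump'' only provides witnesses along some unspecified infinite set of $n$-values for $\alpha_1$, and along a possibly disjoint such set for $\alpha_2$, so the two families cannot be superposed as they come. The averaging/monotonicity lemma is exactly what lets one fill in all sufficiently large $n$ and align the two families; everything after that is routine bookkeeping with the additivity of $h_n$ across the two edge-size classes, which is where the hypothesis $R_1\cap R_2=\emptyset$ is used.
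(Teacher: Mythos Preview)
Your proof is correct and takes essentially the same approach as the paper: align witnesses for $\alpha_1$ and $\alpha_2$ on common vertex sets via an averaging/induced-subgraph argument, then superpose and use that $R_1\cap R_2=\emptyset$ makes the Lubell function additive on the union. The paper packages the alignment step into a reusable statement (Corollary~\ref{nsj}, item~(3)) phrased in the language of admissible sequences with $h(\mathbf{G})=\bar h(\mathbf{G})=\alpha$, while you carry out the same averaging inline directly from the negated definition; the underlying content is identical.
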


The non-jump values can be determined by the following theorem.

\begin{theorem}\label{t:nonjump}
	For any fixed finite set $R$ of non-negative integers and $\alpha \in [0,|R|)$, 
	$\alpha$ is a non-jump if and only if it is the limit of a decreasing sequence of non-strong-jump values.
\end{theorem}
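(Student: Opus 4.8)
The plan is to prove the two implications separately, after recording two soft ingredients. First, an \emph{averaging} observation: for every $R$-graph $G_n$ and every $t\le n$ there is an induced $t$-vertex subgraph $H_t$ with $h_t(H_t)\ge h_n(G_n)$, since the $h_t$-value of the subgraph induced on a uniformly random $t$-subset of $V(G_n)$ has expectation exactly $h_n(G_n)$. Second, a quantitative form of Proposition~\ref{open} (a \emph{spreading lemma}): if $\beta$ is a strong jump with constant $c$, then every $\beta'$ with $|\beta'-\beta|\le c/2$ is a strong jump with constant $c/2$ --- a one-line consequence of the definition, using $\beta'-c/2\ge\beta-c$ and $\beta+c\ge\beta'+c/2$. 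Finally, I note that ``$\alpha$ is the limit of a decreasing sequence of non-strong-jump values'' means exactly that $(\alpha,\alpha+\eta)$ contains a non-strong-jump value for every $\eta>0$; its negation is that some $\delta\in(0,|R|-\alpha)$ has $(\alpha,\alpha+\delta)$ consisting entirely of strong jumps.

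For the ``if'' direction, I would argue by contradiction: assume $\alpha=\lim_k\beta_k$ with each $\beta_k>\alpha$ a non-strong-jump, and that $\alpha$ is a jump with constant $c$. Pick $k$ with $\beta_k-\alpha\le 2c/3$ and set $c':=(\beta_k-\alpha)/2>0$. Since $\beta_k$ is not a strong jump, there is a $t\ge\max R$ such that for arbitrarily large $n$ there is an $R$-graph $G_n$ with $h_n(G_n)\ge\beta_k-c'=\alpha+c'$ all of whose $t$-vertex subgraphs $H$ satisfy $h_t(H)<\beta_k+c'=\alpha+\tfrac32(\beta_k-\alpha)\le\alpha+c$. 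Feeding $\epsilon:=c'$ and this $t$ into the jump hypothesis for $\alpha$, applied to such a $G_n$ with $n$ past the relevant threshold, produces a $t$-vertex subgraph of density $\ge\alpha+c$ --- contradicting the previous sentence. Hence $\alpha$ is a non-jump.

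For the ``only if'' direction I would again argue by contradiction: suppose $\alpha$ is a non-jump but some $\delta\in(0,|R|-\alpha)$ makes $(\alpha,\alpha+\delta)$ entirely strong jumps, and derive that $\alpha$ is a jump with constant $c:=\delta/4$. Fix $\epsilon>0$ and $t\ge\max R$. If $\epsilon\ge\delta/4$, the averaging observation alone gives a $t$-vertex subgraph of density $\ge\alpha+\epsilon\ge\alpha+c$ in any $G_n$ ($n\ge t$) with $h_n(G_n)\ge\alpha+\epsilon$. If $\epsilon<\delta/4$, I would cover the compact interval $[\alpha+\epsilon,\alpha+\delta/4]\subseteq(\alpha,\alpha+\delta)$ by the spreading-lemma neighbourhoods of its points, pass to a finite subcover, and thereby obtain a single $c^\ast=c^\ast(\epsilon)>0$ (the minimum of the finitely many resulting half-constants) with which every point of $[\alpha+\epsilon,\alpha+\delta/4]$ is a strong jump with constant $c^\ast$. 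Then form a chain $\alpha+\epsilon=\gamma_0<\gamma_1<\cdots<\gamma_s=\alpha+\delta/4$ with consecutive gaps $\le c^\ast$ (so $s\le\lceil\delta/(4c^\ast)\rceil$); letting $N_\gamma(\cdot)$ denote the vertex threshold in the strong-jump statement for $\gamma$ with constant $c^\ast$, set $m_s:=t$, $m_i:=N_{\gamma_i}(m_{i+1})$ for $i=s-1,\dots,0$, and $n_0:=m_0$. For $n\ge n_0$ and $h_n(G_n)\ge\alpha+\epsilon$, successively invoking the strong-jump property of $\gamma_0,\gamma_1,\dots,\gamma_{s-1}$ yields nested induced subgraphs $G_n=G^{(0)}\supseteq G^{(1)}\supseteq\cdots\supseteq G^{(s)}$ with $|V(G^{(i)})|=m_i$ and $h_{m_i}(G^{(i)})\ge\gamma_i$; in particular $G^{(s)}$ is a $t$-vertex subgraph of density $\ge\alpha+\delta/4=\alpha+c$. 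Thus $\alpha$ is a jump, the desired contradiction.

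I expect the ``only if'' direction to be the main obstacle, and within it the control of strong-jump constants: these may degenerate to $0$ as one approaches the non-jump value $\alpha$, so the chain length $s$ must be allowed to blow up as $\epsilon\to0$. This is harmless precisely because the jump definition permits $n_0$ to depend on $\epsilon$, which is why the compactness step is run on the $\epsilon$-dependent interval $[\alpha+\epsilon,\alpha+\delta/4]$ rather than on a fixed neighbourhood of $\alpha$. The second delicate point is bookkeeping: the target sizes $m_i$ must be chosen from the top of the chain downward so that every invocation of a strong jump is applied to a graph already known to have enough vertices --- hence $n_0=m_0$ is fixed last. Everything else amounts to unwinding the quantifiers in the definitions of jump and strong jump.
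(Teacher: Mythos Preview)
Your proof is correct in both directions, but the ``only if'' argument takes a genuinely different route from the paper's. The paper first isolates the lemma that $\bar h(\mathbf{G})$ is \emph{never} a strong jump (for any admissible sequence $\mathbf{G}$), and then the implication is essentially one line: given an admissible $\mathbf{G}$ with $h(\mathbf{G})>\alpha$, either $h(\mathbf{G})\ge\alpha+c$ and we are done by $\bar h\ge h$, or $h(\mathbf{G})\in(\alpha,\alpha+c)$; in the latter case $\bar h(\mathbf{G})\ge h(\mathbf{G})$ lies in a region of strong jumps only if it escapes above, i.e.\ $\bar h(\mathbf{G})\ge\alpha+c$. No compactness, no chaining, no $\epsilon$-dependent bookkeeping is needed, and the jump constant comes out as $c=\delta$ rather than $\delta/4$.

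Your approach, by contrast, builds a finite chain $\gamma_0<\cdots<\gamma_s$ through $[\alpha+\epsilon,\alpha+\delta/4]$ and iterates the strong-jump definition along it, using compactness to secure a uniform constant $c^\ast$ and carefully back-propagating the vertex thresholds $m_i$. This is entirely sound---the subtleties you flag (degeneration of constants near $\alpha$, top-down choice of $m_i$) are real and you handle them correctly---but it is working harder than necessary. The structural fact ``upper densities are non-strong-jumps'' absorbs all of that effort into a single observation. What your approach buys is self-containment: it avoids the admissible-sequence formalism and the auxiliary lemma, working directly from the raw definitions. (One cosmetic slip: you write $|V(G^{(0)})|=m_0$, but $G^{(0)}=G_n$ has $n\ge m_0$ vertices; this does not affect the argument.)
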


For example, the set of all strong jumps for $\{1\}$ forms an open interval $(0,1)$ while the only not strong jumps (both weak jumps) are $0,1$.
The set of all strong jumps for $\{2\}$ (i.e, graphs) are 

\[\cup_{k=1}^n\left(\frac{k-1}{k}, \frac{k}{k+1}\right);\]

\noindent while the non-strong-jumps (all are weak jumps) are 

\[0, \frac{1}{2},\frac{2}{3},\ldots, \frac{k}{k+1},\ldots, \text{ and } 1.\]

\noindent By Theorem \ref{t:add}, the following values are non-strong-jumps for $\{1,2\}$.

\[0, \frac{1}{2},\frac{2}{3},\ldots, \frac{k}{k+1}, \dots, 1, \frac{3}{2},\frac{5}{3},\ldots, \frac{2k+1}{k+1},2.\]

\noindent In this paper, we will determine all non-strong-jumps for $\{1,2\}$;
in particular, we will classify every $\alpha\in [0,2]$ as either a strong jump, weak jump, or non-jump.

\begin{theorem}\label{t:12}
	Every $\alpha\in[0,2]$ is a jump for $\{1,2\}$.
	Furthermore, the weak jumps are precisely the following:
	\[0, \frac{1}{2},\frac{2}{3},\ldots, \frac{k}{k+1}, \dots, 1, \frac{9}{8}, \frac{7}{6}, \ldots, 1+\frac{k}{4(k+1)},\ldots, \frac{5}{4}, \frac{3}{2},\frac{5}{3},\ldots, \frac{2k+1}{k+1},2.\]
\end{theorem}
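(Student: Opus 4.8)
The plan is to pin down the set of non-strong-jumps of $\{1,2\}$ exactly — it will turn out to be the set $L$ displayed in the theorem — and then read off the jump/weak-jump dichotomy. The key reduction, which I would take from the earlier parts of the paper and from \cite{tnuh}, is that the non-strong-jumps of a finite $R$ are precisely the densities of hereditary properties of $R$-graphs, and that for $\{1,2\}$-graphs these densities are governed by the generalized Lagrangian
\[
\lambda(G):=\max\left\{\ \sum_{v\in S}x_v+2\sum_{\{v,w\}\in E_2}x_vx_w\ :\ x\in\Delta\ \right\},
\]
where $G$ has $1$-edge set $E_1$, $2$-edge set $E_2$, $S=\{v:\{v\}\in E_1\}$, and $\Delta$ is the standard simplex: blowing up $G$ optimally produces $\{1,2\}$-graphs whose Lubell value tends to $\lambda(G)$, while conversely supersaturation together with blow-up invariance forces any $\{1,2\}$-graph of Lubell value $\gamma+\epsilon$ to contain arbitrarily large blow-ups of some finite $H$ with $\lambda(H)\ge\gamma$. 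So the crux is to compute $\mathcal{L}:=\{\lambda(G):G\text{ a finite }\{1,2\}\text{-graph}\}$ and then combine this with Theorems \ref{t:add}, \ref{t:nonjump} and Proposition \ref{open}.

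First I would classify $\mathcal{L}$. A Motzkin--Straus-type exchange argument applies: if $x$ is optimal with $x_v,x_w>0$ but $\{v,w\}\notin E_2$, then shifting mass between $v$ and $w$ changes the objective linearly — the linear contribution $\sum_{u\in S}x_u$ also changes only linearly, so it does not obstruct the move — hence one of $x_v,x_w$ may be set to $0$ without loss, and iterating shows some optimal weighting is supported on a clique of the $2$-edges. Therefore $\lambda(G)=\max_K\lambda(K)$ over cliques $K$, where $\lambda(K)$ means the Lagrangian of the complete graph on $|K|$ vertices with $|K\cap S|$ of them marked as singletons, i.e.\ $\max_{x\in\Delta_k}\bigl(\sum_{i\le s}x_i+1-\sum_i x_i^2\bigr)$ for suitable $k\ge s$ (using $2\sum_{i<j}x_ix_j=1-\sum_i x_i^2$ on a clique). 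A direct calculation evaluates this as $1-\tfrac1k$ when $s=0$, as $1+\tfrac{k-1}{4k}$ when $s=1$, and as $2-\tfrac1s$ when $s\ge2$; the extremal configurations are $K_k$ with no marked vertices, $K_k$ with exactly one marked vertex, and $K_k$ with all vertices marked, respectively. Hence $\mathcal{L}=\{0,\tfrac12,\tfrac23,\dots\}\cup\{1,\tfrac98,\tfrac76,\dots\}\cup\{\tfrac32,\tfrac53,\dots\}=L\setminus\{\tfrac54,2\}$, and since the only accumulation points of $\mathcal{L}$ are $1$ (already in $\mathcal{L}$), $\tfrac54$, and $2$, the closure of $\mathcal{L}$ is exactly $L$.

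Next I would show $L$ is precisely the non-strong-jump set. For $L\subseteq(\text{non-strong-jumps})$: the values $0,\tfrac12,\dots,1$ and $1,\tfrac32,\tfrac53,\dots,2$ are non-strong-jumps by Theorem \ref{t:add} applied with $R_1=\{2\}$, $R_2=\{1\}$ (alternatively, all but $\tfrac54$ and $2$ are among the Lagrangians above, hence densities of the hereditary properties generated by the corresponding blow-ups, and $2=|R|$ is degenerate); and then $\tfrac54$, being an accumulation point of the non-strong-jumps $1+\tfrac{k}{4(k+1)}$, is a non-strong-jump because by Proposition \ref{open} the strong-jump set is open, so its complement is closed. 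For the reverse inclusion, let $\alpha\in[0,2]\setminus L$; since $L$ is closed, the distance from $\alpha$ to $L$ is some $2c>0$, and $\alpha$ lies strictly between consecutive elements $\beta^-<\alpha<\beta^+$ of $L$ with $\beta^+\ge\alpha+2c$. I claim $\alpha$ is a strong jump with constant $c$: given $t$ and $n$ large and a $\{1,2\}$-graph $G_n$ with $h_n(G_n)\ge\alpha-c$, the supersaturation/blow-up machinery yields a finite $H$ with $\lambda(H)\ge h_n(G_n)-o(1)>\beta^-$ together with arbitrarily large blow-ups $H((t_0))\subseteq G_n$; but $\lambda(H)\in\mathcal{L}\subseteq L$ and $\lambda(H)>\beta^-$, so in fact $\lambda(H)\ge\beta^+$, and a routine optimization picks a sub-configuration on $t$ vertices inside $H((t_0))$ of Lubell value at least $\beta^+-o(1)\ge\alpha+c$. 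Hence $\alpha$ is a strong jump, so it is not a non-strong-jump.

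It then remains to note that $L$ contains no strictly decreasing sequence: its nontrivial accumulation points $1,\tfrac54,2$ are each approached only from below, so no real number is a limit from above of elements of $L$. By Theorem \ref{t:nonjump} this means no $\alpha\in[0,2)$ is a non-jump, while $\alpha=2=|R|$ is a (degenerate) jump; hence every $\alpha\in[0,2]$ is a jump, and consequently the weak jumps coincide with the non-strong-jumps, which we have identified as exactly $L$. I expect the main obstacle to be the reverse inclusion in Step 3 — establishing and applying the non-uniform supersaturation statement that a near-extremal $\{1,2\}$-graph must contain a large blow-up of a configuration whose Lagrangian is one of the \emph{discrete} values in $L$, equivalently that no Lubell value can persist inside one of the open gaps $(\beta^-,\beta^+)$ — together with the secondary bookkeeping of checking that the Motzkin--Straus reduction genuinely survives the linear $1$-edge term.
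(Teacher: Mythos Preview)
Your approach is correct and genuinely different from the paper's. The paper proceeds by a direct case analysis on the intervals $[0,1)$, $[1,\tfrac98)$, $[\tfrac98,\tfrac54)$, $[\tfrac54,\tfrac32)$, $[\tfrac32,2)$; in each case it exhibits an explicit finite family $\mathcal{F}$ and verifies the two hypotheses of Theorem~\ref{t:equiv} by hand. In particular, the case $\alpha\in[\tfrac98,\tfrac54)$ requires computing the new Tur\'an density $\pi(K_t^{\ast},K_2^{\{1,2\}})=\tfrac54-\tfrac1{4(t-1)}$ from scratch, which is the most laborious step. Your route instead classifies the entire set $\mathcal{L}$ of Lagrangians of $\{1,2\}$-graphs in one stroke via a Motzkin--Straus reduction to cliques-with-marked-vertices, takes its closure, and then invokes Proposition~\ref{open} and Theorem~\ref{t:nonjump} to finish. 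This is more conceptual and entirely sidesteps the explicit Tur\'an calculation; on the other hand, the paper's method yields those exact Tur\'an densities as a byproduct, which are of independent interest.

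One point deserves care. In your reverse-inclusion step you assert that supersaturation produces a finite $H$ with $\lambda(H)>\beta^-$ \emph{together with} arbitrarily large blow-ups $H((t_0))\subseteq G_n$. The second clause is not immediate: you have not shown $\pi(H)<h_n(G_n)$ for the particular $H$ you found, so the blow-up theorem does not apply directly to it. The clean fix is to route the argument through Theorem~\ref{t:equiv} rather than arguing pointwise: set $\mathcal{F}_M=\{F:\ |V(F)|=M,\ \lambda(F)\ge\beta^+\}$; since every $M$-vertex subgraph of an $\mathcal{F}_M$-free $G_n$ has $\lambda\le\beta^-$ and hence $h_M\le(1+o_M(1))\beta^-$, averaging gives $\pi(\mathcal{F}_M)\le(1+o_M(1))\beta^-<\alpha$ for $M$ large, while $\min_{F\in\mathcal{F}_M}\lambda(F)\ge\beta^+>\alpha$ by construction. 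Theorem~\ref{t:equiv} then gives that $\alpha$ is a strong jump, and the blow-up step is absorbed into its proof. With this adjustment your outline goes through.
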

It is interesting to observe that the interval $[0,2]$ has more non-strong-jumps than those values guaranteed by Theorem \ref{t:add}.
Those values $1+\frac{k}{4(k+1)}$ (for $k=1,2,3,\ldots$) are the Tur\'an density of a set of two $\{1,2\}$-hypergraphs.

\begin{definition}
	The \textbf{polynomial form} of a hypergraph $H^R_n$, denoted by $\lambda(P, \vec{x})$, is defined as
		\[\lambda(P, \vec{x}):= \sum_{e\in E(P)} |e|!\prod_{i\in e} x_{i}.\]
\end{definition}

\noindent The \textbf{Lagrangian} of $H$, denoted by $\lambda(H)$, is the maximum value of the polynomial $\lambda(P, \vec{x})$
over the simplex $S_{n}=\{(x_1,x_2,\ldots, x_n)\in [0,1]^n \colon \sum_{i=1}^nx_i=1\}.$
This version of Lagrangian of
non-uniform hypergraphs (and its weighed generalization) 
was also considered recently by Peng et.~al. \cite{TPZZ}, where they obtained
 some Motzkin-Straus type results in terms of the Lagrangian of non-uniform hypergraphs whose edges contain 1, or 2, and more vertices.

The definition of Lagrangian,
 when restricting to $r$-uniform hypergraphs, differs only by a multiplicative factor, $r!$, from the classical definition.
This does not substantively affect the results.  
In addition, many results regarding the Lagrangian of uniform hypergraphs generalize nicely.
For example, we have the following theorem, which generalizes a theorem due to Frankl and R\"odl for $r$-uniform hypergraphs.

\begin{theorem}\label{t:equiv}
	Let $R$ be a finite set of positive integers, and let $\alpha\in [0, |R|)$.
	Then $\alpha$ is a jump for $R$ if and only if there exists a finite family of $R$-graphs $\mathcal{F}$ such that
	\begin{enumerate}[(i)]
		\item $\pi(\mathcal{F})\leq \alpha$ and
		\item $\displaystyle \min_{F\in \mathcal{F}} \lambda(F) > \alpha$
	\end{enumerate}
	Moreover, $\alpha$ is a strong jump if the condition (i) is replaced by 
	\begin{enumerate}[(i')]
		\item $\pi(\mathcal{F})< \alpha$. 
	\end{enumerate}
\end{theorem}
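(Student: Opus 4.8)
The plan is to mirror the classical Frankl--R\"odl argument, adapting it to the Lubell-function density and to the non-uniform Lagrangian defined above. Throughout, the key bridge between the two sides is the blow-up/supersaturation machinery from \cite{tnuh}: blowing up an $R$-graph does not change its Tur\'an density, and supersaturation holds for $R$-graphs. I would treat the two directions separately, and within each direction handle the ``jump'' and ``strong jump'' versions in parallel, since the only difference is whether one starts from density $\alpha+\epsilon$ or from $\alpha-c$.

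\textbf{($\Leftarrow$) The family forces a jump.} Suppose $\mathcal{F}$ satisfies (i) and (ii); set $c = \tfrac{1}{2}\left(\min_{F\in\mathcal F}\lambda(F) - \alpha\right) > 0$ and fix $t \ge \max R$, $\epsilon > 0$. Given a large $R$-graph $G_n$ with $h_n(G_n) \ge \alpha + \epsilon$: since $\pi(\mathcal F) \le \alpha < \alpha+\epsilon$, for $n$ large $G_n$ is not $\mathcal F$-free, so it contains some $F \in \mathcal F$; better, by supersaturation it contains $\Omega(n^{|V(F)|})$ copies of $F$. The standard trick is then to pass to a blow-up: a random-like argument (or the Kruskal--Katona / averaging step as in \cite{FR84}) shows that $G_n$ contains, on some bounded vertex subset, a large blow-up $F(m)$ of $F$ for any fixed $m$. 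Now I would compute $h$ of a blow-up of $F$ on a near-optimal weighting: taking the optimal $\vec x$ for $\lambda(F)$, rounding to rational coordinates with denominator $t$, and forming the corresponding blow-up on $t$ vertices $H_t \subseteq G_n$, one gets $h_t(H_t) \ge \lambda(F) - o(1) \ge \alpha + c$ once $t$ is large; for the finitely many smaller $t$ between $\max R$ and that threshold one argues directly (or absorbs them by enlarging $n_0$ and taking $c$ slightly smaller, noting $t \ge \max R$ forces $h_t$ close to $\lambda$). This gives that $\alpha$ is a jump with constant $c$. For the strong-jump refinement, condition (i') gives $\pi(\mathcal F) < \alpha$, so we may instead shrink $c$ to also satisfy $\pi(\mathcal F) < \alpha - c$; then the same argument runs starting from the weaker hypothesis $h_n(G_n) \ge \alpha - c$, since density $\alpha - c > \pi(\mathcal F)$ still forces a copy of some $F \in \mathcal F$.

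\textbf{($\Rightarrow$) A jump produces the family.} Suppose $\alpha$ is a jump with constant $c$; I may assume $c$ is small. Fix $t$ large enough that $\tfrac{1}{\binom{t}{r}} < $ (something small) for all $r \in R$, and consider the family $\mathcal F$ of \emph{all} $R$-graphs $F$ on $t$ vertices with $\lambda(F) > \alpha + c/2$, say. Condition (ii) is immediate from the definition of $\mathcal F$ (with room to spare). For condition (i): I must show $\pi(\mathcal F) \le \alpha$, i.e. that an $\mathcal F$-free $G_n$ cannot have $h_n(G_n)$ bounded away above $\alpha$. Suppose for contradiction $h_n(G_n) \ge \alpha + \epsilon$ for arbitrarily large $n$; the jump hypothesis yields a $t$-vertex subgraph $H_t \subseteq G_n$ with $h_t(H_t) \ge \alpha + c$. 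The point is that on a $t$-vertex set, $h_t$ and $\lambda$ are comparable: since the uniform weighting $\vec x = (1/t,\dots,1/t)$ gives $\lambda(H_t, \vec x) = \sum_{e} |e|!\, t^{-|e|}$, and $h_t(H_t) = \sum_e \binom{t}{|e|}^{-1}$, the ratio of each term is $|e|!\, t^{-|e|} \big/ \binom{t}{|e|}^{-1} = \prod_{j=0}^{|e|-1}(1 - j/t) \to 1$; hence $\lambda(H_t) \ge \lambda(H_t,\vec x) \ge (1-o_t(1)) h_t(H_t) \ge \alpha + c/2$ once $t$ is large. Thus $H_t \in \mathcal F$, contradicting $\mathcal F$-freeness. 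So $\pi(\mathcal F) \le \alpha$, giving (i). For the strong-jump version: if $\alpha$ is a strong jump with constant $c$, the same $\mathcal F$ works, but now the hypothesis ``$h_n(G_n) \ge \alpha - c$ forces an $H_t$ with $h_t(H_t) \ge \alpha + c$'' shows that \emph{no} $\mathcal F$-free $G_n$ can have $h_n(G_n) \ge \alpha - c$ for large $n$, hence $\pi(\mathcal F) \le \alpha - c < \alpha$, which is (i').

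\textbf{Main obstacle.} The routine-looking but genuinely delicate point is the blow-up extraction in the ($\Leftarrow$) direction: going from ``$G_n$ contains many copies of $F$'' to ``$G_n$ contains a large iterated blow-up of $F$ on a bounded number of vertices,'' uniformly over the non-uniform edge types in $R$. In the $r$-uniform case this is exactly the Frankl--R\"odl lemma; here I would either cite the blow-up invariance of Tur\'an density from \cite{tnuh} in the contrapositive form, or redo the dependent-random-choice / averaging argument, checking that having all edge sizes in a fixed finite set $R$ causes no new difficulty (the counting is done size-by-size and then combined). A secondary nuisance is the finitely many small values of $t$ with $\max R \le t < t_0$, where the $o_t(1)$ error terms are not yet small; these must be swept up by choosing $n_0$ appropriately and, if necessary, slightly decreasing the published constant $c$, which is harmless since being a (strong) jump is monotone in $c$.
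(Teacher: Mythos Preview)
Your ($\Rightarrow$) direction is essentially the paper's: take $\mathcal{F}$ to be all $R$-graphs on a fixed large vertex set $[k]$ whose polynomial form at the uniform weight exceeds $\alpha+\Delta/2$, and use the termwise comparison $\lambda(H_k,(1/k,\dots,1/k))\ge\bigl(\prod_{j<\max R}(1-j/k)\bigr)\,h_k(H_k)$ to place the jump-guaranteed dense subgraph $H_k$ into $\mathcal{F}$. Your strong-jump modifications in both directions are also correct (the paper's written proof does not spell them out).

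For ($\Leftarrow$) your plan works but takes the hard road, and the paper's route dissolves both obstacles you flag. Instead of using supersaturation and then extracting a large blow-up of $F$ inside the host $G_N$, the paper moves the blow-up to the \emph{family} side: for each $F\in\mathcal{F}$ fix a near-optimal rational weight $\vec{x}_F/n$ with $n$ large and set $\mathcal{F}':=\{F(\vec{x}_F):F\in\mathcal{F}\}$, so that each $F'\in\mathcal{F}'$ already has $h_n(F')\ge\alpha+\Delta/2$. The blow-up theorem gives $\pi(\mathcal{F}')=\pi(\mathcal{F})\le\alpha$ outright, so any $G_N$ with $h_N(G_N)\ge\alpha+\epsilon$ and $N$ large contains some $F'\in\mathcal{F}'$; then $h_n(G_N[V(F')])\ge h_n(F')\ge\alpha+\Delta/2$, and the \emph{exact} averaging identity $\mathbb{E}_K[h_k(G_N[K])]=h_n(G_N[V(F')])$ over uniformly random $k$-subsets $K\subseteq V(F')$ produces a $k$-vertex subgraph of density at least $\alpha+\Delta/2$ for every $k\ge\max R$, with no $o_k(1)$ loss. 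Thus the ``main obstacle'' (blow-up extraction in the host) is replaced by a single invocation of $\pi(\mathcal{F}')=\pi(\mathcal{F})$, and the small-$t$ nuisance disappears because averaging down preserves the Lubell value exactly---whereas your rounding-the-optimal-weights step incurs an $O(1/t)$ error that is not in general absorbable by shrinking $c$ when $\min_F\lambda(F)-\alpha$ is tiny. You do mention citing blow-up invariance as an alternative; that alternative is precisely the paper's argument.
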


\begin{theorem} \label{turan}
	Let $R$ be a finite set of positive integers.
	If $\mathcal{F}$ is a family (finite or infinite) of $R$-graphs then $\pi^{R}(\mathcal{F})$ is not a strong jump for $R$.
\end{theorem}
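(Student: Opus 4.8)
The plan is to argue by contradiction directly from the definitions, using only two facts about the Tur\'an density of a family: that the sequence $\pi_n(\mathcal{F})$ is non-increasing in $n$ (so that $\pi_n(\mathcal{F})\ge \pi^{R}(\mathcal{F})$ for every $n$, with the defining maximum attained by some $\mathcal{F}$-free $R$-graph since there are only finitely many $R$-graphs on $n$ vertices), and that being $\mathcal{F}$-free is a hereditary property under taking subgraphs. Write $\alpha:=\pi^{R}(\mathcal{F})$ and suppose, for contradiction, that $\alpha$ is a strong jump for $R$ witnessed by some constant $c>0$.

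The first step is to fix the target size. Since $\pi_t(\mathcal{F})\to \alpha$ as $t\to\infty$, choose $t\ge \max\{r:r\in R\}$ large enough that $\pi_t(\mathcal{F})<\alpha+c$. Now feed this $t$ into the definition of strong jump to obtain the threshold $n_0=n_0(R,\alpha,c,t)$, and set $n:=\max\{n_0,t\}$. Because $\pi_n(\mathcal{F})\ge \alpha$, there is an $\mathcal{F}$-free $R$-graph $G_n$ on $n$ vertices with $h_n(G_n)=\pi_n(\mathcal{F})\ge \alpha>\alpha-c$. Applying the strong-jump hypothesis to $G_n$ produces a subgraph $H_t$ of $G_n$ on $t$ vertices with $h_t(H_t)\ge \alpha+c$.

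The contradiction is then immediate: $H_t$, being a subgraph of the $\mathcal{F}$-free graph $G_n$, is itself $\mathcal{F}$-free, so $h_t(H_t)\le \pi_t(\mathcal{F})<\alpha+c$, which clashes with $h_t(H_t)\ge\alpha+c$. (In the degenerate case $\alpha=|R|$ one does not even need the choice of $t$, since $h_t(H_t)\le|R|=\alpha<\alpha+c$ already.) Thus $\alpha$ cannot be a strong jump.

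I do not expect a genuine combinatorial obstacle here; the only point requiring care is the monotonicity input $\pi_n(\mathcal{F})\ge\pi^{R}(\mathcal{F})$, which rests on the non-uniform averaging argument (each edge $e$ survives in exactly the fraction $\tfrac{n-|e|}{n}$ of the $(n-1)$-vertex induced subgraphs, so the average of $h_{n-1}$ over those subgraphs equals $h_n$) established in \cite{tnuh}. Conceptually, the argument is simply that a strong jump demands a gap $\pi(\mathcal{F})<\alpha$ strictly below $\alpha$, whereas a Tur\'an density is approached by the finite-level densities $\pi_n(\mathcal{F})$ from above, leaving no room for such a gap; this is exactly the contrapositive content of the "moreover" clause of Theorem \ref{t:equiv}, except that the direct argument above also covers infinite families $\mathcal{F}$.
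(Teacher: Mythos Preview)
Your argument is correct. The combinatorial core is the same as the paper's: take extremal $\mathcal{F}$-free graphs $G_n$, note that every $t$-vertex subgraph of $G_n$ is again $\mathcal{F}$-free and hence has Lubell value at most $\pi_t(\mathcal{F})$, and use $\pi_t(\mathcal{F})\to\alpha$ to preclude any ``jump'' above $\alpha$. The packaging differs: the paper routes this observation through the admissible-sequence characterization (Corollary~\ref{nsj}), building the sequence $\mathbf{G}=\{G_n\}$ of extremal $\mathcal{F}$-free graphs and observing $h(\mathbf{G})=\bar h(\mathbf{G})=\alpha$ (the upper bound on $\bar h$ coming from $\sigma_t(\mathbf{G})\le\pi_t(\mathcal{F})$). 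You instead unwind the definition of strong jump directly, which makes your argument self-contained and avoids the intermediate machinery of Lemma~\ref{l:strongjump} and Corollary~\ref{nsj}; the paper's version, in turn, is shorter given that this framework is already in place and illustrates how Corollary~\ref{nsj} is meant to be used.
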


The paper is organized as follows.
In section 2, we will review some old facts and prove some new facts about the Tur\'an density of a family of non-uniform hypergraphs.
In section 3, we will prove theorem \ref{t:add}, \ref{t:nonjump}.
In section 4, we will study the properties of Lagrangians and prove
theorem \ref{t:equiv}. 
We will apply these results to $\{1,2\}$-graphs and prove theorem
\ref{t:12} in section 5. The relation between the non-strong-jump
values and the density of hereditary properties are studied in last section.


\section{Tur\'an Density of Hypergraphs}
The Tur\'an densities of $r$-uniform hypergraphs are the classical areas
of extremal graph theory and the readers are referred to Keevash's
survey paper \cite{KeevashSurvey}.
The Tur\'an densities have been generalized to a family of non-uniform hypergraphs in \cite{tnuh} and several properties were proved there. 
The key idea is using the Lubell function as the measurement of the density of non-uniform edges.

\begin{definition}
	Let $H=(V,E)$ be a hypergraph on $n$ vertices and let $\vec{s}=(s_1,...,s_n)$ be a vector of non-negative integers.
	Then the \textbf{blow-up} of $H$, denoted by $H(\vec{s})$, is
        the hypergraph obtained by replacing each vertex $v_{i}$ of
        $H$ with a set $V_{i}$ of vertices with $|V_{i}|=s_{i}$
 (where all $V_i$'s are disjoint).
	Each edge $e=\{v_{i_1},...,v_{i_k}\}\in E$ is replaced by the
        set of edges $V_{i_{1}}\times V_{i_{2}}\times...\times
        V_{i_k}$, so that
	\begin{itemize}
		\item $V(H(\vec{s})) = \bigcup_{i=1}^{n} V_{i}$ with $|V_{i}|=s_{i}$ and
		\item $E(H(\vec{s})) = \bigcup_{e\in E} \prod_{i\in e} V_{i}$.
	\end{itemize}
	If $\vec{s}=(s,s,...,s)$ then we write $H(s)=H(\vec{s})$.
\end{definition}

For a family of $R$-graphs $\cal H$, we define ${\cal
  H}(s)=\{H(s)\colon H\in {\cal H}\}$.

In \cite{tnuh} the following Lemma and Theorem were proved.
Note that we use $v(H)$ to denote $|V(H)|$.

\begin{lemma*}[Supersaturation]
	Let $\mathcal{H}$ be a finite family of hypergraphs.
	For any constant $\epsilon>0$ there exist positive constants $b$ and $n_{0}$ so that if $G$ is a hypergraph on $n>n_{0}$ vertices with
	$R(G)\subseteq R(\mathcal{H})$ and $h_{n}(G)>\pi(\mathcal{H})+a$ then
	$G$ contains at least $b\binom{n}{v(H)}$ copies of some $H\in \mathcal{H}$.
\end{lemma*}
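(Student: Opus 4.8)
The plan is to run the classical Erd\H{o}s--Simonovits supersaturation argument, averaging the Lubell function over induced sub-hypergraphs on a fixed number $m$ of vertices. The point that makes this go through cleanly in the non-uniform setting is that the Lubell density is preserved \emph{exactly} in expectation: if $S$ is a uniformly random $m$-subset of $V(G)$ (with $\max R(\mathcal{H})\le m\le n$), then for an edge $e$ of size $k$ one has $\Pr[e\subseteq S]=\binom{n-k}{m-k}/\binom{n}{m}$, while $e$ contributes $1/\binom{m}{k}$ to $h_m(G[S])$ whenever it survives; the identity $\binom{n}{m}\binom{m}{k}=\binom{n}{k}\binom{n-k}{m-k}$ then yields $\mathbb{E}[h_m(G[S])]=\sum_{e\in E(G)}1/\binom{n}{|e|}=h_n(G)$. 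I would also record the crude bound $h_m(G[S])\le |R(\mathcal{H})|=:M$, valid because $R(G)\subseteq R(\mathcal{H})$ splits the edges of $G[S]$ into at most $M$ size classes, each contributing at most $1$.

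Next I would pin down the window $m$. Since $\pi_m(\mathcal{H})\to\pi(\mathcal{H})$, I can fix $m$ (depending only on $\mathcal{H}$ and $\epsilon$, and at least $\max R(\mathcal{H})$) with $\pi_m(\mathcal{H})<\pi(\mathcal{H})+\epsilon/2$; then every $R(\mathcal{H})$-graph on $m$ vertices with Lubell value at least $\pi(\mathcal{H})+\epsilon/2$ fails to be $\mathcal{H}$-free, hence contains a copy of some $H\in\mathcal{H}$. Now assume $h_n(G)>\pi(\mathcal{H})+\epsilon$. Setting $p:=\Pr[h_m(G[S])\ge\pi(\mathcal{H})+\epsilon/2]$ and combining $\mathbb{E}[h_m(G[S])]=h_n(G)>\pi(\mathcal{H})+\epsilon$ with $h_m(G[S])\le M$ gives $\pi(\mathcal{H})+\epsilon<(\pi(\mathcal{H})+\epsilon/2)+pM$, so $p>\epsilon/(2M)=:\delta$, a positive constant independent of $n$. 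Thus at least $\delta\binom{n}{m}$ of the $m$-subsets $S$ induce a copy of some member of $\mathcal{H}$, and since $\mathcal{H}$ is finite, pigeonhole produces a single $H^{\ast}\in\mathcal{H}$ appearing in at least $\tfrac{\delta}{|\mathcal{H}|}\binom{n}{m}$ of them.

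Finally I would convert this into a count of copies: a fixed copy of $H^{\ast}$ lies in exactly $\binom{n-v(H^{\ast})}{m-v(H^{\ast})}$ of the $m$-subsets, so $G$ contains at least $\tfrac{\delta}{|\mathcal{H}|}\binom{n}{m}/\binom{n-v(H^{\ast})}{m-v(H^{\ast})}=\tfrac{\delta}{|\mathcal{H}|}\binom{n}{v(H^{\ast})}/\binom{m}{v(H^{\ast})}$ copies of $H^{\ast}$, which is at least $b\binom{n}{v(H^{\ast})}$ with $b:=\tfrac{\delta}{|\mathcal{H}|}\bigl(\max_{H\in\mathcal{H}}\binom{m}{v(H)}\bigr)^{-1}>0$ depending only on $\mathcal{H}$ and $\epsilon$; taking $n_{0}:=m$ completes the argument. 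The only step that is not completely routine is verifying the exact expectation identity $\mathbb{E}[h_m(G[S])]=h_n(G)$ --- this is precisely where the Lubell normalization is doing the work --- and once that is in hand the rest is the standard averaging-and-pigeonhole routine, with finiteness of $\mathcal{H}$ (and hence existence of the limit defining $\pi(\mathcal{H})$) being all that is needed to avoid any subtleties.
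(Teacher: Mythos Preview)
Your proof is correct. Note, however, that the present paper does not actually prove this lemma here: it is quoted from the authors' earlier paper \cite{tnuh} (see the sentence ``In \cite{tnuh} the following Lemma and Theorem were proved''), so there is no in-paper proof to compare against. That said, your argument is precisely the standard Erd\H{o}s--Simonovits averaging scheme adapted to the Lubell function, and the key identity $\mathbb{E}[h_m(G[S])]=h_n(G)$ you highlight is exactly the mechanism that makes the non-uniform version go through; this is almost certainly the same approach taken in \cite{tnuh}.
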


\begin{theorem*}[Blow-ups]
	Let $\mathcal{H}$ be a finite family of hypergraphs and let $s\geq 2$.
	Then $\pi(\mathcal{H}(s))=\pi(\mathcal{H})$.
\end{theorem*}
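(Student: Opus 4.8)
The plan is to prove the two inequalities $\pi(\mathcal{H}(s))\ge\pi(\mathcal{H})$ and $\pi(\mathcal{H}(s))\le\pi(\mathcal{H})$ separately. Throughout write $R=R(\mathcal{H})$, so that every member of $\mathcal{H}(s)$ is again an $R$-graph and both Tur\'an densities are taken over the same $R$; also note at the outset that deleting from a host graph $G$ every edge whose size is not in $R$ only decreases $h_n(G)$ and cannot destroy the property of being $\mathcal{H}(s)$-free, so the hypothesis $R(G)\subseteq R(\mathcal{H})$ of the Supersaturation Lemma may be assumed without loss of generality.

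For the lower bound, I would observe that $H(s)$ contains $H$ as a subgraph: picking one vertex from each part $V_i$ of $H(s)$ gives a copy of $H$, since for every $e\in E(H)$ the chosen vertices indexed by $e$ form an edge of $\prod_{i\in e}V_i\subseteq E(H(s))$. Hence if $G$ contains no member of $\mathcal{H}$ then $G$ contains no member of $\mathcal{H}(s)$, i.e. every $\mathcal{H}$-free $R$-graph is $\mathcal{H}(s)$-free. Therefore $\pi_n(\mathcal{H}(s))\ge\pi_n(\mathcal{H})$ for every $n$, and letting $n\to\infty$ gives $\pi(\mathcal{H}(s))\ge\pi(\mathcal{H})$.

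For the upper bound, fix $\epsilon>0$; it suffices to show that every $\mathcal{H}(s)$-free $R$-graph $G$ on $n$ vertices with $n$ large has $h_n(G)<\pi(\mathcal{H})+\epsilon$, as this bounds $\pi_n(\mathcal{H}(s))$ and $\epsilon$ is arbitrary. Suppose instead $h_n(G)\ge\pi(\mathcal{H})+\epsilon$. By the Supersaturation Lemma there are $b>0$ and $n_0$, depending only on $\mathcal{H}$ and $\epsilon$, so that for $n>n_0$ the graph $G$ contains at least $b\binom{n}{k}$ copies of some fixed $H\in\mathcal{H}$, where $k=v(H)$. Fix a labelling $V(H)=\{v_1,\dots,v_k\}$; each copy determines at least one \emph{ordered} copy, a $k$-tuple $(x_1,\dots,x_k)$ of distinct vertices with $\{x_i:i\in e\}\in E(G)$ for every $e\in E(H)$, and distinct copies give tuples with distinct underlying sets. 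So the set $T\subseteq V(G)^k$ of ordered copies satisfies $|T|\ge b\binom{n}{k}\ge\delta n^k$ for a constant $\delta=\delta(\mathcal{H},\epsilon)>0$. Next I would randomly partition $V(G)$ into $k$ parts $U_1,\dots,U_k$, each vertex assigned uniformly and independently; since the expected number of tuples of $T$ with $x_i\in U_i$ for all $i$ equals $|T|/k^k$ while each part has size $\Theta(n)$ with probability $1-o(1)$, some partition simultaneously has all $|U_i|$ of size $\Theta(n)$ and leaves a set $T'\subseteq U_1\times\cdots\times U_k$ with $|T'|\ge c'\prod_i|U_i|$, where $c'>0$ is independent of $n$. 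Regarding $T'$ as the edge set of a $k$-partite $k$-uniform hypergraph with sides $U_1,\dots,U_k$, its positive edge density forces, by the classical theorem of Erd\H{o}s on Tur\'an numbers of complete multipartite hypergraphs, disjoint sets $W_i\subseteq U_i$ with $|W_i|=s$ and $W_1\times\cdots\times W_k\subseteq T'$ once the $|U_i|$ exceed a threshold depending only on $c'$, $k$, $s$. But then for every choice $w_i\in W_i$ the tuple $(w_1,\dots,w_k)$ is an ordered copy of $H$, which is exactly the condition that $G$ restricted to $W_1\cup\cdots\cup W_k$ contains $H(s)$ with $W_i$ the blow-up of $v_i$. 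This contradicts that $G$ is $\mathcal{H}(s)$-free, completing the upper bound, and together with the lower bound gives $\pi(\mathcal{H}(s))=\pi(\mathcal{H})$.

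The averaging in the random-partition step and the final passage $n\to\infty$ are routine. The step that requires care, and the main obstacle, is the alignment issue in the last paragraph: having many copies of $H$ does not by itself yield a blow-up, because the copies may sit in $G$ in mutually incompatible ways. This is why I pass to ordered copies, so that a combinatorial box $W_1\times\cdots\times W_k$ inside $T'$ translates precisely into the edge pattern that defines $H(s)$, and why I first split $V(G)$ into $k$ disjoint parts, so that the box produced by the Erd\H{o}s bound automatically has pairwise disjoint sides.
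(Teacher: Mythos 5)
Your argument is correct, and it is worth noting that the present paper does not prove this theorem at all: it is quoted from \cite{tnuh}, so the comparison is with the standard supersaturation proof that the cited source adapts to the non-uniform setting. Your route is the same in outline: the lower bound from the observation that a transversal of the parts of $H(s)$ is a copy of $H$, and the upper bound by supersaturation (many copies of some $H\in\mathcal{H}$ once $h_n(G)\geq\pi(\mathcal{H})+\epsilon$) followed by Erd\H{o}s's theorem producing a complete $k$-partite box, which translates back into a copy of $H(s)$. The difference is the device used for the alignment problem. The classical finish builds the auxiliary $k$-uniform hypergraph whose edges are the $k$-sets of $V(G)$ spanning a copy of $H$, extracts a box $K^{(k)}(t,\ldots,t)$ with $t$ much larger than $s$, and then pigeonholes over the finitely many role-assignments (``patterns'') to find an $s$-sub-box in which all copies are consistently aligned. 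You instead order the copies from the outset, so that any box inside your set $T'$ is automatically aligned, and you recover disjointness of the prospective parts by a random $k$-partition of $V(G)$; the two devices are interchangeable and of comparable cost, yours trading the pattern pigeonhole for an averaging step over partitions. One small simplification of that step: you need not argue separately that all parts are $\Theta(n)$ with high probability. A partition achieving at least the expected number $|T|/k^k\geq\delta(n/k)^k$ of surviving ordered copies automatically satisfies $\prod_i|U_i|\geq|T'|\geq(\delta/k^k)n^k$, and since $\prod_{j\neq i}|U_j|\leq n^{k-1}$ this forces $|U_i|\geq(\delta/k^k)n$ for every $i$; so a single expectation bound suffices, and the rest of your proof goes through as written.
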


If $H$ is contained in a blow-up of $G$, then we say that $H\leq G$.
The following are useful corollaries to the previous theorem.

\begin{corollary}
If $G$ contains $H$ and $G\leq H$ then $\pi(G)=\pi(H)$.
\end{corollary}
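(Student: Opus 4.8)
The plan is to squeeze $\pi(G)$ between two copies of $\pi(H)$ using monotonicity of Tur\'an density under the subgraph/blow-up partial order, and then invoke the Blow-ups theorem to collapse the sandwich. First I would record the elementary monotonicity fact: if $A \le B$ (that is, $A$ embeds into some blow-up of $B$), then $\pi(A) \le \pi(B)$. This is because any $B$-free hypergraph is automatically $A$-free — if it contained $A$ it would contain $A$ inside a blow-up of $B$, and since blow-ups of a single-vertex-duplication preserve edges appropriately, one extracts a copy of $B$; more carefully, the standard argument is that $\pi(A) = \pi(A(s)) \le \pi(B)$ once $A(s) \supseteq B$ for suitable $s$, combined with the Blow-ups theorem $\pi(A(s)) = \pi(A)$. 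I would state this as the one-line lemma underpinning everything.

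Next I would apply this to the two hypotheses. From ``$G$ contains $H$'' we have $H \le G$ (containment is a special case of the blow-up order with $\vec s = (1,\dots,1)$), hence $\pi(H) \le \pi(G)$. From ``$G \le H$'' we get $\pi(G) \le \pi(H)$ directly. Combining, $\pi(G) = \pi(H)$. The only subtlety is making sure the monotonicity lemma is applied in the form where the ``smaller'' graph may itself need to be blown up: for $\pi(G) \le \pi(H)$ we use $G \le H$, meaning $G \subseteq H(\vec t)$ for some $\vec t$; then an $H$-free graph $F$ on many vertices is $H(\vec t)$-free after we note $H \le H(\vec t)$ would reintroduce $H$ — the cleaner route is $\pi(G) = \pi(G(s))$ for all $s$ by Blow-ups, choose $s$ large enough that $G(s) \supseteq H$ (possible since $G \le H$ implies $H \le G$... ) — wait, that is not automatic, so I will instead argue directly: $\pi(G) \le \pi(H)$ because every $H$-free graph $F$ contains no copy of $H$, hence no copy of $H(\vec 1) = H$, and since $G \le H$ a copy of $G$ in $F$ would... this needs care. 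The correct direction: $G \le H$ gives $\pi(G) \le \pi(H)$ by the monotonicity lemma applied with $A = G$, $B = H$; I should make sure that lemma is phrased as ``$A \le B \implies \pi(A) \le \pi(B)$'' and cited as immediate from Supersaturation plus Blow-ups, exactly as in \cite{tnuh}.

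The main obstacle — really the only nontrivial point — is getting the direction of the monotonicity lemma pinned down and confirming it is available: one wants ``$A \le B \Rightarrow \pi(A) \le \pi(B)$'', which follows because a $B$-free hypergraph on $n$ vertices cannot contain $B$, and if it contained $A$ with $A \le B$ then for a suitable blow-up we would be forced (via Supersaturation, giving many copies of $A$, then a standard random-selection/counting argument inside the blow-up structure of those copies) to find $B$, a contradiction. Since the Blow-ups theorem ($\pi(\mathcal H(s)) = \pi(\mathcal H)$) and Supersaturation are both quoted from \cite{tnuh} in the excerpt, I will treat the monotonicity lemma as their immediate consequence. Once that is in hand, the corollary is a two-line sandwich: $\pi(H) \le \pi(G)$ from $H \le G$, and $\pi(G) \le \pi(H)$ from $G \le H$, so $\pi(G) = \pi(H)$.
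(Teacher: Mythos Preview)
Your overall strategy---sandwich $\pi(G)$ between two copies of $\pi(H)$ via monotonicity and the Blow-ups theorem---is exactly the paper's one-line proof: $\pi(H)\leq \pi(G)\leq \pi(H(s))=\pi(H)$. So the architecture is right.

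Where you go wrong is in justifying the monotonicity step ``$A\leq B \Rightarrow \pi(A)\leq \pi(B)$.'' You offer several arguments and most of them are either backward or do not work. First, ``any $B$-free hypergraph is automatically $A$-free'' is false in general (take $A$ a single edge and $B$ a large clique). Second, you write ``$\pi(A)=\pi(A(s))\leq \pi(B)$ once $A(s)\supseteq B$,'' but $A\leq B$ means $A\subseteq B(s)$, not $B\subseteq A(s)$; you have the containment reversed, and even if $B\subseteq A(s)$ held it would give $\pi(B)\leq\pi(A)$, the wrong inequality. Third, the Supersaturation sketch in your last paragraph tries to show that a $B$-free graph containing a copy of $A$ would be forced to contain $B$; but a single copy of $A$ triggers nothing (Supersaturation needs density above a threshold), and in any case proving ``$B$-free $\Rightarrow$ $A$-free'' would again yield $\pi(B)\leq\pi(A)$.

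The clean justification, implicit in the paper, is a single line and needs no Supersaturation: $A\leq B$ means $A\subseteq B(s)$ for some $s$; hence every $A$-free $R$-graph is automatically $B(s)$-free, so $\pi(A)\leq \pi(B(s))=\pi(B)$ by the Blow-ups theorem. Apply this with $(A,B)=(H,G)$ (from ``$G$ contains $H$'', i.e.\ $H\subseteq G=G(1)$) to get $\pi(H)\leq\pi(G)$, and with $(A,B)=(G,H)$ (from $G\leq H$) to get $\pi(G)\leq\pi(H)$. Strip out the detours and your proof becomes exactly the paper's.
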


\begin{proof}
This follows from the observation that $\pi(H)\leq \pi(G)\leq \pi(H(s))=\pi(H)$.
\end{proof}

Baber and Talbot \cite{baber} point out the following useful Corollary for uniform hypergraphs.
The same thing is true for non-uniform hypergraphs.

\begin{corollary}
	If $G\leq H$ and $\mathcal{F}$ is a finite family of hypergraphs, then
	\begin{enumerate}[(i)]
		\item $\pi(\mathcal{F}\cup \{G\})\leq \pi(\mathcal{F}\cup \{H\})$,
		\item $\pi(\mathcal{F}\cup \{G\})=\pi(\mathcal{F}\cup \{G, H\})$.
	\end{enumerate}
\end{corollary}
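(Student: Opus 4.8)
The plan is to prove part (i) by a supersaturation argument in the style of Baber and Talbot, and then obtain part (ii) formally from part (i) together with the obvious monotonicity of the Tur\'an density under enlarging the forbidden family.

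For (i): since $G\le H$, fix $s$ with $G\subseteq H(s)$, and note that $R(G)\subseteq R(H)$. A short computation with the ``free'' edge sizes (those appearing in no member of $\mathcal F\cup\{H\}$, which contribute a constant amount to each side) reduces the inequality to the case $R=R(\mathcal F\cup\{H\})$, which we henceforth assume; in this case every $(\mathcal F\cup\{G\})$-free $R$-graph automatically has all its edge sizes in $R(\mathcal F\cup\{H\})$. Suppose for contradiction that $\pi(\mathcal F\cup\{G\})>\pi(\mathcal F\cup\{H\})$ and put $\epsilon=\pi(\mathcal F\cup\{G\})-\pi(\mathcal F\cup\{H\})>0$. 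By the definition of $\pi$ as a limit, for all large $n$ there is an $(\mathcal F\cup\{G\})$-free $R$-graph $J_n$ on $n$ vertices with $h_n(J_n)>\pi(\mathcal F\cup\{G\})-\tfrac{\epsilon}{2}=\pi(\mathcal F\cup\{H\})+\tfrac{\epsilon}{2}$. Then $J_n$ is $\mathcal F$-free, so the Supersaturation Lemma applied to $\mathcal F\cup\{H\}$ (with constant $\tfrac{\epsilon}{2}$) gives $b>0$ and $n_0$ such that for $n>n_0$ the graph $J_n$ contains at least $b\binom{n}{v(H)}$ copies of $H$. A positive density of copies of $H$ forces a blow-up: this is exactly the step that occurs inside the proof of the Blow-up Theorem (a non-uniform version of Erd\H{o}s's observation), so for $n$ large $J_n$ contains $H(s)$, and hence $G$ --- contradicting that $J_n$ is $G$-free. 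Therefore $\pi(\mathcal F\cup\{G\})\le\pi(\mathcal F\cup\{H\})$.

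For (ii): every $(\mathcal F\cup\{G,H\})$-free $R$-graph is in particular $(\mathcal F\cup\{G\})$-free, so $\pi_n(\mathcal F\cup\{G,H\})\le\pi_n(\mathcal F\cup\{G\})$ for all $n$, and hence $\pi(\mathcal F\cup\{G,H\})\le\pi(\mathcal F\cup\{G\})$. Conversely, apply part (i) with $\mathcal F\cup\{G\}$ in the role of $\mathcal F$: since $G\le H$, it yields $\pi\big((\mathcal F\cup\{G\})\cup\{G\}\big)\le\pi\big((\mathcal F\cup\{G\})\cup\{H\}\big)$, that is, $\pi(\mathcal F\cup\{G\})\le\pi(\mathcal F\cup\{G,H\})$. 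The two inequalities give the asserted equality.

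The only non-routine ingredient --- and the step I expect to be the main obstacle --- is the implication that $J_n$ having at least $b\binom{n}{v(H)}$ copies of $H$ forces $H(s)\subseteq J_n$ for $n$ large. I would quote this from (the proof of) the Blow-up Theorem; alternatively one re-derives it by partitioning $V(J_n)$ into $v(H)$ classes, retaining the copies of $H$ that meet each class in the prescribed way (a positive fraction survives by averaging), and then running a standard counting argument within each edge class to blow every vertex up to size $s$. Everything else is routine manipulation of the definitions of $\pi_n$ and $\pi$ and the reduction to $R=R(\mathcal F\cup\{H\})$.
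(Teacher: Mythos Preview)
Your argument is correct, and part (ii) matches the paper's proof exactly. For part (i), however, you are working harder than necessary: the paper simply invokes the Blow-up Theorem as a black box. Since $G\subseteq H(s)$, any $(\mathcal F\cup\{G\})$-free graph is $(\mathcal F\cup\{H(s)\})$-free, giving $\pi(\mathcal F\cup\{G\})\le\pi(\mathcal F\cup\{H(s)\})$; then the Blow-up Theorem (applied to the family $\mathcal F\cup\{H\}$) yields $\pi(\mathcal F\cup\{H(s)\})=\pi(\mathcal F\cup\{H\})$, and the inequality follows in two lines. Your supersaturation-plus-contradiction argument is precisely the engine \emph{inside} the proof of the Blow-up Theorem, so you are re-deriving that theorem in situ rather than quoting it. This costs you the extra reduction to $R=R(\mathcal F\cup\{H\})$ and the ``positive density of copies forces a blow-up'' step that you correctly flag as the only non-routine point --- both of which are already absorbed into the statement of the Blow-up Theorem. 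Nothing is wrong, but the detour buys you nothing here.
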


\begin{proof}
	Let $s\geq 1$ be such that $G\subseteq H(s)$.
	The Blow-up theorem implies that $\pi(\mathcal{F}\cup
        \{H(s)\})=\pi(\mathcal{F}\cup 
\{H\})$.
	And $G\subseteq H(s)$ implies that $\pi(\mathcal{F}\cup \{G\})\leq \pi(\mathcal{F}\cup \{H(s)\})$.
	Thus $(i)$ holds.

	It is clear that $\pi(\mathcal{F}\cup \{G\})\geq \pi(\mathcal{F}\cup \{G, H\})$.
	Also, note that $\pi(\mathcal{F}\cup \{G\})=\pi(\mathcal{F}\cup \{G\}\cup \{G\})$.
	And $(i)$ implies that $\pi(\mathcal{F}\cup \{G\}\cup \{G\})\leq \pi(\mathcal{F}\cup \{G\}\cup \{H\})$.
	This gives us $(ii)$, the desired equality.
\end{proof}

We may occasionally wish to consider families of hypergraphs which are not finite.
Let $R$ be a finite set of non-negative integers.
Let $\mathcal{H}$ be a family of finite $R$-graphs.
Let $\mathcal{H}_{n}=\{H\in \mathcal{H}: |H|\leq n\}$.
Note that $\mathcal{H}_{n}$ is a finite family of $R$-graphs, so $\pi^{R}(\mathcal{H}_{n})$ is well defined.
Furthermore, if $m>n$ then $\pi^{R}(\mathcal{H}_{n})\geq \pi^{R}(\mathcal{H}_{m})$.
Thus $\lim_{n\to\infty} \pi^{R}(\mathcal{H}_{n})$ exists.

\begin{prop}
	If $\mathcal{H}$ is a family of finite $R$-graphs, then
	$\displaystyle \lim_{n\to\infty}\pi^{R(\mathcal{H})}(\mathcal{H}_{n})=\pi^{R}(\mathcal{H})$.
\end{prop}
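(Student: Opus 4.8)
The plan is to show both inequalities $\lim_{n\to\infty}\pi^{R(\mathcal{H})}(\mathcal{H}_n) \le \pi^R(\mathcal{H})$ and $\lim_{n\to\infty}\pi^{R(\mathcal{H})}(\mathcal{H}_n) \ge \pi^R(\mathcal{H})$ separately. Here there are two things happening at once: we are changing the ambient set of allowed edge types from $R$ to $R(\mathcal{H})$ (which may be a proper subset), and we are truncating the family $\mathcal{H}$ to its members on at most $n$ vertices. The key point is that removing edge types not used by any member of $\mathcal{H}$ does not change which graphs are $\mathcal{H}$-free in any essential way, so the subtlety is really only the truncation.

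First I would handle the easy direction. For each fixed $n$, every $H \in \mathcal{H}_n$ is a member of $\mathcal{H}$, so any $G$ that is $\mathcal{H}$-free is in particular $\mathcal{H}_n$-free; restricting $G$ to its edges of type in $R(\mathcal{H})$ only decreases $h_m(G)$ and keeps it $\mathcal{H}_n$-free (since members of $\mathcal{H}_n$ only have edges of type in $R(\mathcal{H})$). Taking the supremum over $m$ and then $m \to \infty$ gives $\pi^{R(\mathcal{H})}(\mathcal{H}_n) \ge \pi^R(\mathcal{H})$ for every $n$, hence the limit is $\ge \pi^R(\mathcal{H})$. The reverse direction is where the work is. Fix $\epsilon > 0$. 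By definition of $\pi^R(\mathcal{H})$ as a limit (and monotonicity of $\pi^R_m$ in $m$), there is an $m_0$ and an $\mathcal{H}$-free $R$-graph $G$ on some number of vertices with $h(G) > \pi^R(\mathcal{H}) + \epsilon$ — or more carefully, $\pi^R_m(\mathcal{H}) > \pi^R(\mathcal{H}) + \epsilon$ fails for large $m$, so we instead argue: suppose for contradiction $\lim_n \pi^{R(\mathcal{H})}(\mathcal{H}_n) > \pi^R(\mathcal{H}) + 3\epsilon$. Then for every $n$ there is an $\mathcal{H}_n$-free $R(\mathcal{H})$-graph witnessing density close to this limit. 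The obstacle is that such a graph is only $\mathcal{H}_n$-free, not $\mathcal{H}$-free, so it need not witness anything about $\pi^R(\mathcal{H})$ directly.

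To overcome this I would invoke the Supersaturation Lemma together with a compactness/diagonalization argument over the truncations $\mathcal{H}_k$. Fix $k$; $\mathcal{H}_k$ is finite, so by supersaturation, if $G$ is any hypergraph on $n > n_0(\epsilon, \mathcal{H}_k)$ vertices with $R(G) \subseteq R(\mathcal{H})$ and $h_n(G) > \pi(\mathcal{H}_k) + \epsilon$, then $G$ contains $\Omega\!\left(\binom{n}{v(H)}\right)$ copies of some $H \in \mathcal{H}_k$; in particular such a $G$ cannot be $\mathcal{H}_k$-free once $n$ is large. Now I would run this the other way: $\pi^{R(\mathcal{H})}(\mathcal{H}_n) \le \pi(\mathcal{H}_n) + o(1)$ as $n\to\infty$ is essentially the definition, and since $\mathcal{H}_n \subseteq \mathcal{H}$, we have $\pi(\mathcal{H}_n) \ge \pi(\mathcal{H})$ with the gap controlled by the Supersaturation/removal argument applied at level $\mathcal{H}_k$ for $k$ large. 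Concretely: take $k$ large enough that $\pi(\mathcal{H}_k) < \pi^R(\mathcal{H}) + \epsilon$ (possible since $\pi(\mathcal{H}_k) \to \pi^R(\mathcal{H})$ by the definition of the limit defining $\pi^R(\mathcal{H})$ for the infinite family — wait, this is circular, so instead use that $\pi(\mathcal{H}_k)$ is a decreasing sequence bounded below by $\pi^R(\mathcal{H})$ with limit $L \ge \pi^R(\mathcal{H})$, and we must show $L \le \pi^R(\mathcal{H})$); for $n \ge n_0(\epsilon, \mathcal{H}_k)$ with $n \ge k$, any $\mathcal{H}_n$-free $R(\mathcal{H})$-graph on $n$ vertices is in particular $\mathcal{H}_k$-free, hence has density $\le \pi(\mathcal{H}_k) + \epsilon$ by supersaturation. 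Letting $n \to \infty$ gives $L \le \pi(\mathcal{H}_k) + \epsilon$ for every $k$, and then $k \to \infty$ gives $L \le L + \epsilon$... which is again unhelpful.

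The clean fix — and the step I expect to be the crux — is to relate $\pi^R(\mathcal{H})$ back to finite subfamilies directly. I claim $\pi^R(\mathcal{H}) = \inf_k \pi^R(\mathcal{H}_k)$: one direction is clear since $\mathcal{H}_k \subseteq \mathcal{H}$; for the other, an $\mathcal{H}$-free graph of bounded size is $\mathcal{H}_k$-free for $k$ exceeding that size, which handles $\pi^R_m$ for each fixed $m$ and then one passes to the limit. Granting $\pi^R(\mathcal{H}) = \lim_k \pi^R(\mathcal{H}_k)$, and noting that for finite families $\pi^R(\mathcal{F}) = \pi^{R(\mathcal{F})}(\mathcal{F})$ (removing unused edge types from the ambient set: given an extremal $\mathcal{F}$-free $R$-graph, delete all edges of type outside $R(\mathcal{F})$ — the result is still $\mathcal{F}$-free since every $F$ uses only types in $R(\mathcal{F})$, and conversely an $R(\mathcal{F})$-graph is an $R$-graph, so the two Turán densities coincide), we get $\pi^R(\mathcal{H}) = \lim_k \pi^{R(\mathcal{H}_k)}(\mathcal{H}_k)$. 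Finally, $\pi^{R(\mathcal{H})}(\mathcal{H}_n) \le \pi^{R(\mathcal{H}_n)}(\mathcal{H}_n)$ always (more allowed edge types can only increase density of free graphs — actually they're equal by the removal argument, since $R(\mathcal{H}_n) \subseteq R(\mathcal{H})$ and unused types don't help), and $\pi^{R(\mathcal{H})}(\mathcal{H}_n) \ge \pi^{R(\mathcal{H})}(\mathcal{H}) = \pi^R(\mathcal{H})$ from the first paragraph, so sandwiching $\pi^{R(\mathcal{H})}(\mathcal{H}_n)$ between $\pi^R(\mathcal{H})$ and $\pi^{R(\mathcal{H}_n)}(\mathcal{H}_n) \to \pi^R(\mathcal{H})$ finishes it. The only genuine content is the removal-of-unused-edge-types lemma and the identity $\pi^R(\mathcal{H}) = \lim_k \pi^R(\mathcal{H}_k)$ for infinite families, which is proved by the compactness observation that any finite $\mathcal{H}$-free configuration is $\mathcal{H}_k$-free for all large $k$.
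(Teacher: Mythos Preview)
Your proposal spends most of its energy on two issues that are either red herrings or actual missteps.

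The $R$ versus $R(\mathcal{H})$ distinction is not real content here, and your handling of it is incorrect. Your claim that $\pi^R(\mathcal{F}) = \pi^{R(\mathcal{F})}(\mathcal{F})$ for finite families is false whenever $R(\mathcal{F}) \subsetneq R$: adding every edge of each type in $R \setminus R(\mathcal{F})$ keeps any graph $\mathcal{F}$-free (no member of $\mathcal{F}$ uses those types) and raises its Lubell value by exactly $|R \setminus R(\mathcal{F})|$, so in fact $\pi^R(\mathcal{F}) = \pi^{R(\mathcal{F})}(\mathcal{F}) + |R \setminus R(\mathcal{F})|$. Your deletion argument only gives one inequality, since deleting edges lowers density. (This same computation shows the proposition as literally printed fails for $R \supsetneq R(\mathcal{H})$; the paper's proof simply opens with ``we only need to consider the case $R = R(\mathcal{H})$'' and that is the intended statement.) Your ``easy direction'' in the first paragraph has the same flaw: restricting an extremal $\mathcal{H}$-free $R$-graph to its $R(\mathcal{H})$-edges decreases its density, so it does not witness $\pi^{R(\mathcal{H})}(\mathcal{H}_n) \geq \pi^R(\mathcal{H})$.

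Once $R = R(\mathcal{H})$, you never need supersaturation or any diagonalization. The single observation you are circling but state backwards is this: a graph $G_m$ on $m$ vertices that is $\mathcal{H}_m$-free is automatically $\mathcal{H}$-free, because any $H \in \mathcal{H}$ contained in $G_m$ has at most $m$ vertices and hence already lies in $\mathcal{H}_m$. You wrote the trivial implication (``an $\mathcal{H}$-free graph of bounded size is $\mathcal{H}_k$-free for $k$ exceeding that size''), which holds for every $k$ since $\mathcal{H}_k \subseteq \mathcal{H}$ and proves nothing. With the correct direction the paper's argument is a few lines: if the limit exceeded $\pi(\mathcal{H})$ by $\epsilon > 0$, then for arbitrarily large $m$ there would be an $\mathcal{H}_m$-free graph $G_m$ on $m$ vertices with $h_m(G_m) \geq \pi(\mathcal{H}) + \epsilon/2$; but such a $G_m$ is $\mathcal{H}$-free, contradicting the definition of $\pi(\mathcal{H})$ once $m$ is large.
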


\begin{proof}
	We only need to consider the case when $R=R(\mathcal{H})$.
	First, for any fixed value of $n$ we have that $\mathcal{H}_{n}\subseteq \mathcal{H}$.
	This implies that $\pi^{R}(\mathcal{H}_{n})\geq \pi(\mathcal{H})$ for every $n$.
	Hence $\lim_{n\to\infty}\pi^{R}(\mathcal{H}_{n})\geq \pi(\mathcal{H})$.

	Assume, towards a contradiction, that $\lim_{n\to\infty}\pi^{R}(\mathcal{H}_{n})=\pi(\mathcal{H})+\epsilon$ for some $\epsilon>0$.
	Then, for any $m,n \in \mathbb{N}$ there exists an $R$-graph $G_{m}$ on $m$ vertices that is $\mathcal{H}_{n}$-free satisfying $h_{m}(G_{m})=\pi(\mathcal{H})+\frac{\epsilon}{2}$.
	If $m$ is large enough, this implies that $G_{m}$ is not $\mathcal{H}$-free.
	Consider the case when $n=m$ and $m$ is large enough that $G_{m}$ is not $\mathcal{H}$-free.
	Since $G_{m}$ is not $\mathcal{H}$-free, there exists an $H\in \mathcal{H}$ such that $H\subseteq G_{m}$.
	This implies that $|H|\leq m$ and therefore $H\in \mathcal{H}_{m}$.
	Since $H\in \mathcal{H}_{m}$ and $H\in G_{m}$ we have that $G_{m}$ is not $H_{m}$-free.  Contradiction.
\end{proof}

There is also an analogue of supersaturation for infinite families.
In the proof of the lemma, the $b>0$ depends on $|\mathcal{H}|$ (a finite family) and $\epsilon$.
In the event that $|\mathcal{H}|$ is infinite, one can replace $|\mathcal{H}|$ in the theorem with $|\mathcal{H}_{m}|$
for the smallest $m$ such that $\pi^{R}(\mathcal{H}_{m})<\pi(\mathcal{H})+\epsilon$.

We also make the following observation:

\begin{prop}
	If $R=R_{1}\cup R_{2}$ and $R_{1}\cap R_{2}=\emptyset$ then for any $R_{1}$-graph $H_1$ and any $R_{2}$-graph $H_{2}$ we have that $\pi^{R}(H_1\cup H_2)=\pi^{R_{1}}(H_1)+\pi^{R_{2}}(H_2)$.
\end{prop}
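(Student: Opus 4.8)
The plan is to prove the two inequalities $\pi^R(H_1\cup H_2)\le \pi^{R_1}(H_1)+\pi^{R_2}(H_2)$ and $\pi^R(H_1\cup H_2)\ge \pi^{R_1}(H_1)+\pi^{R_2}(H_2)$ separately, using in both directions the fact that the edges of an $R$-graph split into an $R_1$-part and an $R_2$-part that do not interact because $R_1\cap R_2=\emptyset$. For an $R$-graph $G$ on $n$ vertices I write $G^{(i)}$ for the sub-hypergraph obtained by keeping exactly those edges of $G$ whose size lies in $R_i$; then $G^{(i)}$ is an $R_i$-graph and, since every edge of $G$ belongs to exactly one of $G^{(1)},G^{(2)}$, we get $h_n(G)=h_n(G^{(1)})+h_n(G^{(2)})$. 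Throughout I read $H_1\cup H_2$ as the forbidden family $\{H_1,H_2\}$, so that a graph is $(H_1\cup H_2)$-free if and only if it is both $H_1$-free and $H_2$-free.

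First I would record the simple observation that, for each $i$, an $R$-graph $G$ is $H_i$-free if and only if $G^{(i)}$ is $H_i$-free: any embedding of the $R_i$-graph $H_i$ into $G$ must send edges to edges of sizes in $R_i$, so it lands inside $G^{(i)}$, and conversely $G^{(i)}$ is a subgraph of $G$ on the same vertex set. Given this, the upper bound falls out at once: if $G$ is a $(H_1\cup H_2)$-free $R$-graph on $n$ vertices then $G^{(i)}$ is an $H_i$-free $R_i$-graph, so $h_n(G^{(i)})\le \pi_n^{R_i}(H_i)$ and hence $h_n(G)\le \pi_n^{R_1}(H_1)+\pi_n^{R_2}(H_2)$. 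Maximizing over such $G$ gives $\pi_n^R(H_1\cup H_2)\le \pi_n^{R_1}(H_1)+\pi_n^{R_2}(H_2)$, and letting $n\to\infty$ (each of these limits exists, by \cite{tnuh}) yields the first inequality.

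For the matching lower bound I would, for each $n$, choose an extremal $H_1$-free $R_1$-graph $G_1$ and an extremal $H_2$-free $R_2$-graph $G_2$, each attaining the corresponding $\pi_n^{R_i}(H_i)$ (a maximum over the finitely many hypergraphs on a fixed $n$-set), place both on the \emph{same} vertex set of size $n$, and form the overlay $G$ whose edge set is $E(G_1)\cup E(G_2)$. Because $R_1\cap R_2=\emptyset$, this overlay satisfies $G^{(1)}=G_1$ and $G^{(2)}=G_2$, so by the observation $G$ is both $H_1$-free and $H_2$-free, while $h_n(G)=h_n(G_1)+h_n(G_2)=\pi_n^{R_1}(H_1)+\pi_n^{R_2}(H_2)$. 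Hence $\pi_n^R(H_1\cup H_2)\ge \pi_n^{R_1}(H_1)+\pi_n^{R_2}(H_2)$, and $n\to\infty$ gives the reverse inequality; combining the two proves the proposition.

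I do not expect a real obstacle. Unlike Theorem~\ref{t:add}, this argument uses neither supersaturation nor the blow-up theorem, precisely because the $R_1$- and $R_2$-parts of an $R$-graph are completely decoupled once $R_1\cap R_2=\emptyset$; that disjointness is the one hypothesis doing the real work. The only points needing a little care are the routine bookkeeping in passing from the per-$n$ statements to their limits, and, in the lower bound, making sure the two extremal graphs share a common vertex set so that their overlay genuinely is $(H_1\cup H_2)$-free.
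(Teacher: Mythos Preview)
Your proof is correct, and there is in fact nothing to compare it against: the paper records this proposition as a bare observation with no proof supplied. Your argument---splitting an $R$-graph into its $R_1$- and $R_2$-layers, bounding each layer by $\pi_n^{R_i}(H_i)$ for the upper bound, and overlaying two extremal graphs on a common vertex set for the lower bound---is precisely the natural way to fill in the details, and your reading of $H_1\cup H_2$ as the forbidden family $\{H_1,H_2\}$ is the intended one (the disjoint-union reading would falsify the statement).
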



\section{Strong Jumps, Weak Jumps, and Non-jumps}

The jumping constant conjecture, or the jump problem, for $r$-uniform hypergraphs was originated by Erd\H{o}s, who asked whether $r$-uniform hypergraphs jump.
Erd\H{o}s observed that every value $\alpha\in [0,1)$ is a jump for $R=\{2\}$ (i.e. graphs).
This is based on the following celebrated theorem due to Erd\H{o}s-Stone-Siminovits.

\begin{theorem}[Erd\H{o}s-Siminovits-Stone]
	For a non-empty graph $H$, a positive integer $t$, and $\epsilon>0$,
	there exists an $n_0:=n_0(H,t,\epsilon)$ such that if $G_n$ is any graph
	on $n\geq n_0$ vertices with at least $(1-\frac{1}{\chi(H)-1}+\epsilon){n\choose 2}$ edges, 
	then $G_{n}$ contains the blow-up graph $H(t)$ as a subgraph.  
\end{theorem}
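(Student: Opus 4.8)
The plan is to reduce to a complete-multipartite target and then argue by induction on the number of parts. Write $k=\chi(H)$; since $H$ has an edge, $k\ge2$, and set $r=k-1$ and $s=v(H)\cdot t$. Fixing a proper $k$-colouring of $H$, each colour class of the blow-up $H(t)$ has at most $v(H)\cdot t=s$ vertices and its edges still run between colour classes, so $H(t)$ is a subgraph of $K_k(s)$. It therefore suffices to prove: for all integers $r\ge1$, $s\ge1$ and all $\epsilon>0$ there is an $n_0$ such that every graph $G$ on $n\ge n_0$ vertices with $e(G)\ge(1-\frac1r+\epsilon)\binom n2$ contains $K_{r+1}(s)$. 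I would prove this by induction on $r$, the only external input being the K\H{o}v\'ari--S\'os--Tur\'an theorem (Tur\'an's theorem itself is not needed).

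The base case $r=1$ is exactly K\H{o}v\'ari--S\'os--Tur\'an: if $e(G)\ge\epsilon\binom n2$ then the average degree is at least $\epsilon(n-1)$, and double-counting the pairs $(w,S)$ with $|S|=s$ and $S\subseteq N(w)$ gives $\sum_w\binom{\deg(w)}{s}\le(s-1)\binom ns$ whenever $G$ is $K_{s,s}$-free, while convexity forces the left-hand side to be $\gtrsim n(\epsilon n)^s/s!$, which is impossible for $n$ large; so $G\supseteq K_{s,s}=K_2(s)$.

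For the inductive step ($r\ge2$), I would first clean up $G$: repeatedly delete a vertex whose degree is below $(1-\frac1r+\frac\epsilon2)(m-1)$, where $m$ is the current number of vertices. This removes fewer than $(1-\frac1r+\frac\epsilon2)\binom n2$ edges in total, so the process halts on a subgraph $G'$ with more than $\frac\epsilon2\binom n2$ edges (hence with $|V(G')|\to\infty$ as $n\to\infty$) and with minimum degree at least $(1-\frac1r+\frac\epsilon2)(|V(G')|-1)$. Renaming, take $G$ on $n$ vertices (large) with $\delta(G)\ge(1-\frac1r+\frac\epsilon2)(n-1)$, hence $e(G)\ge(1-\frac1r+\frac\epsilon2)\binom n2$. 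Since $1-\frac1r>1-\frac1{r-1}$, the induction hypothesis produces a copy of $K_r(s')$ in $G$ with parts $A_1,\dots,A_r$, for a parameter $s'=s'(r,s,\epsilon)$ to be chosen large. For each $i$ we have $\sum_{a\in A_i}\deg_G(a)\ge s'(1-\frac1r+\frac\epsilon2)(n-1)$; vertices with at most $s-1$ neighbours in $A_i$ account for fewer than $(s-1)n$ of these edges and every other vertex for at most $s'$, so the set $C_i$ of vertices with at least $s$ neighbours in $A_i$ satisfies $|C_i|\ge(1-\frac1r+\frac\epsilon4)n$ once $s'$ and $n$ are large. Then $C=\bigcap_{i=1}^rC_i$ has $|C|\ge n-\sum_i(n-|C_i|)\ge\frac{r\epsilon}{4}n$, and discarding the at most $rs'$ vertices of $C$ that lie in $A_1\cup\dots\cup A_r$ leaves a linear-size set $C'$. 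Finally, for each $v\in C'$ choose an $s$-set $B_i(v)\subseteq N(v)\cap A_i$ for each $i$; there are only $\binom{s'}{s}^r$ possible tuples $(B_1,\dots,B_r)$, so for $n$ large some $s$ vertices $v_1,\dots,v_s\in C'$ share a common tuple $(B_1,\dots,B_r)$, and then the $r+1$ pairwise disjoint $s$-sets $B_1,\dots,B_r,\{v_1,\dots,v_s\}$ span a copy of $K_{r+1}(s)$: the $B_i$--$B_j$ edges come from the $K_r(s')$, and each $v_\ell$ is adjacent to all of $B_1\cup\dots\cup B_r$ by construction. Since $K_{r+1}(s)=K_k(s)\supseteq H(t)$, the theorem follows.

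The main obstacle is precisely this inductive step, and in particular the spot where the threshold $1-\frac1r$ is tight: each part $A_i$ ``rejects'' up to a $(\frac1r-\frac\epsilon4)$-fraction of the vertices, and union-bounding this over the $r$ parts leaves only a $\Theta(\epsilon)$-fraction of usable vertices --- with any weaker density hypothesis the set $C$ could be empty. One must also ensure the clean-up keeps a subgraph whose order tends to infinity, so that both the recursive call and the final pigeonhole (which compares $n$ with the $r,s,\epsilon$-dependent constant $(s-1)\binom{s'}{s}^r$) succeed. A heavier route that I would mention but not pursue is via the Szemer\'edi Regularity Lemma: a regular partition of $G$ has a reduced graph with more than $(1-\frac1r)\binom k2$ edges, hence contains $K_{r+1}$ by Tur\'an's theorem, and the embedding lemma upgrades the corresponding regular pairs to a $K_{r+1}(s)$; this is conceptually cleaner but far more machinery than the statement deserves.
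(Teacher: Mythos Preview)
The paper does not give a proof of this statement: it quotes the Erd\H{o}s--Stone--Simonovits theorem as a classical, ``celebrated'' result and then immediately \emph{uses} it to explain why every $\alpha\in[0,1)$ is a jump for $R=\{2\}$. There is thus no authors' proof to compare against.

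That said, your argument is a correct and standard proof of the theorem. The reduction to $K_{r+1}(s)$ via a proper $k$-colouring is fine, the K\H{o}v\'ari--S\'os--Tur\'an base case is handled appropriately, and the inductive step is the usual one: pass to a subgraph of large minimum degree, invoke the induction hypothesis to find $K_r(s')$, show that a linear proportion of vertices have at least $s$ neighbours in every part (your double-counting bound $|C_i|\ge(1-\tfrac1r+\tfrac\epsilon4)n$ is right once $s'$ and $n$ are large), intersect to get $|C|\ge\tfrac{r\epsilon}{4}n$, and pigeonhole over the $\binom{s'}{s}^r$ neighbourhood profiles to pin down the final part. Your remark that the $1-\tfrac1r$ threshold is exactly tight in the union bound is the correct diagnosis of why the hypothesis cannot be weakened. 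The alternative regularity-lemma route you mention is also valid and indeed heavier than necessary here.
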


\noindent Here $\chi(H)$ is the chromatic number of $H$ and $H(t)$ is a blowup of $H$.

For any $\alpha\in [0,1)$, let $k$ be an integer satisfying $\alpha\in [1-\tfrac{1}{k}, 1-\tfrac{1}{k+1})$.
One can choose $c=1-\frac{1}{k+1}-\alpha$.
Applying the Erd\H{o}s-Stone-Simonovits theorem, for any $t\geq 2$, any $\epsilon>0$, and sufficiently large $n$,
any graph on $n$ vertices with at least $(\alpha+\epsilon)\binom{n}{2}$ will contain $K_{k+1}(s)$, the blowup of the complete graph $K_{k+1}$.
Here we choose $s$ large enough so that $s(k+1)>t$.
Observe that there is an induced subgraph of $K_{k+1}(s)$ with density at least $1-\frac{1}{k+1}$, the density of $K_{k+1}(s)$.
Thus, $\alpha$ is a jump for $2$.

For $r$-uniform hypergraphs, Erd\H{o}s observed that every value in $[0,\frac{r!}{r^r})$ is a jump.
Here $\frac{r!}{r^r}$ is the density of $K^r_{s,s,\ldots, s}$, the complete $r$-partite $r$-graph of equal part size.
Based on these facts, Erd\H{o}s asked whether hypergraphs always jump.
In addition, Erd\H{o}s \cite{Er79}  put \$500 prize on whether $\tfrac{2}{9}$ is a jump for $3$-graphs.
Frankl and R\"odl \cite{FR84} showed  $1-\frac{1}{l^{r-2}}$ is a non-jump for every $r\geq 3$ and $l>2r$.
In their paper, Frankl and R\"odl introduced an equivalent definition of jump using admissible sequences
and upper densities (for $r$-uniform hypergraphs).
Now we generalize it to $R$-graphs.

\begin{definition}
	Let $\mathbf{G}:=\{G^R_{n_i}\}_{i=1}^\infty$ be a sequence of $R$-graphs.
	We say that $\mathbf{G}$ is an \textbf{admissible} sequence if $n_i\to \infty$ as $i\to\infty$ and 
	$\displaystyle \lim_{i\to\infty} h_{n_i}(G^R_{n_i})$ exists.
\end{definition}

\noindent The limit $\lim_{i\to\infty} h_{n_i}(G_{n_i}^R)$, denoted by $h(\mathbf{G})$, is called the \textbf{density} of the sequence $\mathbf{G}$.
Note $0\leq h(\mathbf{G})\leq |R|$ holds for any $R$-admissible sequence $\mathbf{G}$.
The converse also holds: for any $\alpha\in [0,|R|]$ there exists an $R$-admissible sequence $\mathbf{G}$ with density $\alpha$.

\begin{definition}
	The \textbf{upper density} of an admissible sequence of $R$-graphs $\mathbf{G}$, denoted by $\bar h(\mathbf{G})$, is defined as
	$\displaystyle \lim_{t\to\infty} \sigma_{t}(\mathbf{G})$, where $\displaystyle \sigma_{t}(\mathbf{G}):=\sup_i \sup_{T\in {[n_i]\choose t}}\{h_t(G_{n_i}^R[T])\}$
	is the supremum of the density of all induced subgraphs on $t$ vertices among all the graphs in the sequence.
\end{definition}

Note for any $t\geq \max\{r\colon r\in R\}$, $\sigma_{t}(\mathbf{G})$ is a decreasing function on $t$.
Thus, the limit, $\lim_{t\to\infty} \sigma_{t}(\mathbf{G})$ exists.
We also note that $\sup$ can be replaced by $\max$ in the definition.

\begin{lemma}\label{l:jump}
	A value $\alpha\in [0,|R|]$ is a jump for $R$ if and only if there exists a constant $c:=c(\alpha)>0$
	such that if $\mathbf{G}$ is an admissible sequence of $R$-graphs with $h(\mathbf{G})>\alpha$,
	we have that $\bar h(\mathbf{G}) \geq \alpha +c$.
\end{lemma}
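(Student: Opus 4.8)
The plan is to establish both directions by translating between the two equivalent formulations of jumping: the ``$\epsilon$-$c$-$t$-$n_0$'' definition and the admissible-sequence/upper-density reformulation due to Frankl and R\"odl. The key bridge in both directions is the observation that, for an admissible sequence $\mathbf{G}$, the upper density $\bar h(\mathbf{G}) = \lim_{t\to\infty}\sigma_t(\mathbf{G})$ records exactly the best density attainable on induced subgraphs of bounded size that appear infinitely often (or with density arbitrarily close to a supremum) in the sequence.

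First I would prove the forward direction. Suppose $\alpha$ is a jump for $R$ with jumping constant $c$, and let $\mathbf{G} = \{G^R_{n_i}\}$ be admissible with $h(\mathbf{G}) > \alpha$; set $\epsilon = h(\mathbf{G}) - \alpha > 0$. Fix any $t \ge \max\{r : r \in R\}$ and let $n_0 = n_0(R,\alpha,\epsilon,t)$ be the constant from the definition of jump. Since $h_{n_i}(G^R_{n_i}) \to \alpha + \epsilon$, for all sufficiently large $i$ we have $n_i \ge n_0$ and $h_{n_i}(G^R_{n_i}) \ge \alpha + \epsilon/2 \ge \alpha + \epsilon'$ for a slightly smaller fixed $\epsilon' > 0$; hence (applying the definition of jump with $\epsilon'$ in place of $\epsilon$, which only changes $n_0$) each such $G^R_{n_i}$ contains an induced subgraph on $t$ vertices of density at least $\alpha + c$. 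Therefore $\sigma_t(\mathbf{G}) \ge \alpha + c$ for every admissible $t$, and letting $t \to \infty$ gives $\bar h(\mathbf{G}) \ge \alpha + c$, as desired.

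For the converse, suppose the stated condition holds with constant $c$; I want to show $\alpha$ is a jump with jumping constant, say, $c/2$. Assume not: then there is some $\epsilon > 0$ and some $t \ge \max\{r : r\in R\}$ such that for every $n_0$ there is an $R$-graph $G_n$ with $n \ge n_0$, $h_n(G_n) \ge \alpha + \epsilon$, yet every induced $t$-vertex subgraph of $G_n$ has density $< \alpha + c/2$. Choosing $n_0 = i$ for $i = 1,2,\dots$ yields $R$-graphs $G_{n_i}$ with $n_i \to \infty$; passing to a subsequence (using that $h_{n}(\cdot) \in [0,|R|]$ is bounded, so the densities have a convergent subsequence) we obtain an admissible sequence $\mathbf{G}$ with $h(\mathbf{G}) \ge \alpha + \epsilon > \alpha$. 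By hypothesis $\bar h(\mathbf{G}) \ge \alpha + c$. But $\bar h(\mathbf{G}) = \lim_{s\to\infty}\sigma_s(\mathbf{G})$ and $\sigma_s$ is decreasing in $s$, so $\sigma_s(\mathbf{G}) \ge \alpha + c$ for the particular value $s = t$; this means some graph in the sequence has an induced $t$-vertex subgraph of density at least $\alpha + c - o(1) > \alpha + c/2$ (using that the supremum defining $\sigma_t$ is attained, per the remark after the definition of upper density), contradicting the choice of the $G_{n_i}$. One must be slightly careful here: $\sigma_t(\mathbf{G}) \ge \alpha + c$ because $\sigma_t \ge \sigma_s$ for all $s \ge t$ and the limit of the $\sigma_s$ is $\ge \alpha + c$; since $\sigma_t$ is realized as an actual maximum over some finite induced subgraph of some $G_{n_i}$ in the sequence, we get a genuine $t$-vertex subgraph of density $\ge \alpha + c > \alpha + c/2$, the contradiction.

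The main obstacle is bookkeeping at the boundary between the two definitions rather than any deep idea: one must match up the order of quantifiers (the jump definition quantifies ``for every $\epsilon$ and every $t$'' with $n_0$ depending on both, while the sequence formulation folds $\epsilon$ into the hypothesis $h(\mathbf{G}) > \alpha$ and pushes $t \to \infty$), and one must exploit the monotonicity of $\sigma_t$ in $t$ together with the fact — noted in the paper right after the definition — that the supremum in $\sigma_t$ may be taken as a maximum, so that ``$\sigma_t(\mathbf{G}) \ge \alpha + c$'' delivers an actual witnessing subgraph rather than just an approximate one. A minor technical point is the loss from $c$ to $c/2$ (or $\epsilon$ to $\epsilon/2$) when invoking convergence of densities; this is harmless since the constant in the definition of jump is only required to exist.
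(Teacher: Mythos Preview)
Your proof is correct and follows essentially the same strategy as the paper's: the forward direction is identical (fix $\epsilon$ from $h(\mathbf{G})-\alpha$, apply the jump definition for each $t$, conclude $\sigma_t(\mathbf{G})\ge\alpha+c$), and the reverse direction is again the contrapositive, building an admissible counterexample sequence from the negation of the jump definition. The only cosmetic difference is that the paper obtains admissibility by \emph{deleting edges} to force $h_{n_i}(G'_{n_i})=\alpha+\epsilon\pm O(1/n)$, whereas you pass to a convergent subsequence via Bolzano--Weierstrass; both maneuvers are equally valid and neither affects the structure of the argument.
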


\begin{proof}
	We first prove that it is necessary.
	Suppose that $\alpha$ is a jump for $R$. There is a constant $c>0$
	such that for any $\epsilon>0$ and any integer $t\geq \max\{r\colon r\in R\}$,
	there is an integer $n_0$ such that if $n\geq n_{0}$ and $G_{n}$ is an $R$-graph on $n$
	vertices with $h_{n}(G_{n})\geq \alpha+\epsilon$ then there exists a subgraph $H_t$ of $G_n$ on $t$ vertices with
	$h_{t}(H_{t})\geq \alpha+c$.

	Consider any admissible sequence of $R$-graphs $\mathbf{G}:=\{G^R_{n_i}\}_{i=1}^\infty$ with $h(\mathbf{G})>\alpha$.
	Choose $\epsilon=\frac{h(\mathbf{G})-\alpha}{2}$.
	There exists an $i_0$ such that $h_{n_i}(G_{n_i})>h(\mathbf{G})-\epsilon=\alpha +\epsilon$ for all $i\geq i_0$.
	Since $\alpha$ is a jump, for any $t$, for sufficiently large $i$, $G_{n_i}$ contains a subgraph $H_t$ with $h_t(H)\geq \alpha+c$.
	Thus, we have $\bar h(\mathbf{G}) \geq \alpha +c$.

	We now prove the contrapositive of the reverse implication.
	Suppose that $\alpha$ is not a jump.
	For any $c>0$, there exist $\epsilon>0$ and $t>\max\{r\colon r\in R\}$, so that for any $i$ there exists a graph $G_{n_i}$ with $n_i\geq i$
	satisfying $h_{n_i}(G_{n_i})\geq \alpha + \epsilon$ and $G_{n_i}$ contains no subgraph $H_t$ with density $h_t(H_t)\geq \alpha+c$.
	The sequence $\mathbf{G}$ formed by the $G_{n_i}$'s may not be admissible because the limit $\lim_{i\to\infty} h_{n_i}(G_{n_i})$ may not exist.
	However, by deleting some edges, each $G_{n_i}$ contains a spanning subgraph $G'_{n_i}$ with $h_{n_i}(G'_{n_i})=\alpha +\epsilon \pm O(\frac{1}{n})$.
	Now $\mathbf{G'}:=\{G'_{n_i}\}_{i=1}^\infty$ is an admissible sequence with $h(\mathbf{G'})=\alpha+\epsilon>\alpha$.
	Note $G'_{n_i}$ contains no subgraph $H_t$ with density $h_t(H_t)\geq \alpha+c$.
	Thus we have $\bar h(\mathbf{G'})< \alpha +c.$
\end{proof}

The structure of the set of all jump values is not obvious. 
However, the set of strong jumps has very nice structure.

\begin{prop}\label{open}
	For any fixed finite set $R$ of non-negative integers, the set of all strong jumps for $R$ is an open subset of $(0,|R|)$. 
\end{prop}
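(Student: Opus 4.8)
The plan is to prove the statement in two stages: first, that every strong jump for $R$ lies strictly inside $(0,|R|)$; second, that the set of strong jumps is open in $\mathbb{R}$. Since $(0,|R|)$ is itself open, combining the two stages yields that the set of strong jumps is an open subset of $(0,|R|)$.

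For the first stage I would rule out the two endpoints directly from the definition. If $\alpha=0$ were a strong jump with constant $c>0$, then for any $t$ and any $n\ge n_0(R,0,c,t)$ the edgeless $R$-graph $G_n$ satisfies $h_n(G_n)=0\ge -c=\alpha-c$, so the definition would force a subgraph $H_t$ with $h_t(H_t)\ge c>0$; but every subgraph of the edgeless graph is edgeless, a contradiction. If $\alpha=|R|$ were a strong jump with constant $c>0$, take $G_n$ to be the complete $R$-graph, so $h_n(G_n)=|R|\ge|R|-c$, and the definition would force $H_t$ with $h_t(H_t)\ge|R|+c$; this is impossible since $h_t(H)=\sum_{e\in E(H)}1/\binom{t}{|e|}\le|R|$ for every $R$-graph $H$ on $t$ vertices. (These two facts are announced in the introduction; this is the intended justification.)

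For the second stage the key point is that the definition of strong jump leaves slack on both sides: the hypothesis only requires $h_n(G_n)\ge\alpha-c$ and the conclusion only guarantees $h_t(H_t)\ge\alpha+c$, so there is room to perturb $\alpha$. Concretely, suppose $\alpha$ is a strong jump witnessed by the constant $c$ and the function $t\mapsto n_0(R,\alpha,c,t)$. I would show that every $\beta$ with $|\beta-\alpha|<c/2$ is again a strong jump, witnessed by the constant $c/2$ and the same function: if $n\ge n_0(R,\alpha,c,t)$ and $h_n(G_n)\ge\beta-\tfrac c2$, then $h_n(G_n)\ge\beta-\tfrac c2>\alpha-c$ (since $\beta>\alpha-\tfrac c2$), so the strong-jump property of $\alpha$ produces a subgraph $H_t$ with $h_t(H_t)\ge\alpha+c>\beta+\tfrac c2$ (since $\beta<\alpha+\tfrac c2$), as required. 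Finally, using $\alpha\in(0,|R|)$ from the first stage, put $\delta=\min\{c/2,\ \alpha,\ |R|-\alpha\}>0$; then every $\beta\in(\alpha-\delta,\alpha+\delta)$ lies in $(0,|R|)$ and satisfies $|\beta-\alpha|<c/2$, hence is a strong jump. Thus an entire neighborhood of $\alpha$ consists of strong jumps, which is exactly the asserted openness.

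I expect no genuine obstacle here; the only points demanding care are the chain of inequalities in the perturbation step — one must keep $c/2$ small enough that the weakened hypothesis $h_n(G_n)\ge\beta-c/2$ still implies $h_n(G_n)\ge\alpha-c$ and the guaranteed conclusion $h_t(H_t)\ge\alpha+c$ still implies $h_t(H_t)\ge\beta+c/2$ — and verifying that the perturbed value $\beta$ remains in $[0,|R|]$ so that calling it a strong jump is meaningful, which is precisely why the endpoint analysis is carried out first.
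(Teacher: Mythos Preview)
Your proposal is correct and the core perturbation argument in Stage~2 is exactly the paper's proof: given a strong jump $\alpha$ with constant $c$, take $c'=c/2$ and show every $\beta\in(\alpha-\tfrac c2,\alpha+\tfrac c2)$ is a strong jump via the same chain of inequalities $\beta-\tfrac c2>\alpha-c$ and $\alpha+c>\beta+\tfrac c2$. Your Stage~1 (ruling out the endpoints $0$ and $|R|$) is an addition the paper omits from the proof itself, relying instead on the remarks in the introduction; including it makes your argument more self-contained but is not a different approach.
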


\begin{proof}
	Suppose $\alpha$ is a strong jump. 
	Let $c>0$ be the positive constant (from the definition) whose existence is guaranteed by the fact that $\alpha$ is a strong jump.
	For every $\beta\in (\alpha-\frac{c}{2}, \alpha+\frac{c}{2})$, we can choose a new constant $c':=\frac{c}{2}$.
	For every $t\geq \max \{r:r\in R\}$ and every $R$-graph $G_{n}$ with $n\geq n_0$
	and $h_{n}(G_{n})\geq \beta-c' >\alpha-c$, there exists a subgraph $H_t$ of $G_n$ on $t$ vertices with
	$h_{t}(H_{t})\geq \alpha+c>\beta+c'$.
	Thus $\beta$ is a strong jump. 
	Hence the set of strong jumps is open.
\end{proof}

\begin{lemma}\label{l:strongjump}
	A value $\alpha\in [0,|R|)$ is a strong jump for $R$ if and only if
	there exists a constant $c:=c(\alpha)>0$ such that every admissible sequence of $R$-graphs $\mathbf{G}$
	with $h(\mathbf{G})=\alpha$ has upper density $\bar h(\mathbf{G})\geq \alpha +c$.
\end{lemma}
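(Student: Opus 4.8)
The plan is to reduce both implications to three elementary facts. (i) For an $R$-graph $G$ on $n$ vertices, averaging over induced $t$-vertex subgraphs gives $\mathbb{E}_{T\in\binom{[n]}{t}}[h_t(G[T])]=h_n(G)$; this is a one-line computation, since a fixed $k$-edge survives in $G[T]$ with probability $\binom{n-k}{t-k}/\binom nt$ and $\binom tk^{-1}\cdot\binom{n-k}{t-k}/\binom nt=\binom nk^{-1}$. In particular $\sigma_t(\mathbf G)\ge h(\mathbf G)$ for every $t$, hence $\bar h(\mathbf G)\ge h(\mathbf G)$. (ii) Admissibility of $\mathbf G=\{G_{n_i}\}$ means $n_i\to\infty$ and $h_{n_i}(G_{n_i})\to h(\mathbf G)$. (iii) $\sigma_t(\mathbf G)$ is non-increasing in $t$, so $\bar h(\mathbf G)=\inf_t\sigma_t(\mathbf G)$.

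For the forward direction, suppose $\alpha$ is a strong jump with constant $c$; I claim the same $c$ works in the lemma. Let $\mathbf G$ be admissible with $h(\mathbf G)=\alpha$, fix $t\ge\max R$, and let $n_0$ be the integer from the strong-jump hypothesis. By (ii) some member $G_{n_i}$ has $n_i\ge n_0$ and $h_{n_i}(G_{n_i})\ge\alpha-c$, so strong jump produces a $t$-vertex subgraph of density $\ge\alpha+c$; thus $\sigma_t(\mathbf G)\ge\alpha+c$, and as $t$ was arbitrary, $\bar h(\mathbf G)\ge\alpha+c$.

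For the reverse direction I prove the contrapositive: assuming $\alpha$ is not a strong jump, I produce for each $c>0$ an admissible sequence of density exactly $\alpha$ with $\bar h<\alpha+c$. Applying the negation of the strong-jump property with constant $c/2$ gives a scale $t$ and, for all large $n$, an $R$-graph $G_n$ with $h_n(G_n)\ge\alpha-\tfrac c2$ and no $t$-vertex subgraph of density $\ge\alpha+\tfrac c2$; by (i) also $h_n(G_n)<\alpha+\tfrac c2$. Thinning to a subsequence along which $h_{n_i}(G_{n_i})$ converges, we get an admissible $\mathbf G$ with $h(\mathbf G)\in[\alpha-\tfrac c2,\alpha+\tfrac c2)$ and $\bar h(\mathbf G)\le\sigma_t(\mathbf G)\le\alpha+\tfrac c2$. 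Replacing $\mathbf G$ by the sequence whose $s$-th term is an $s$-vertex subgraph of some member of $\mathbf G$ of density exceeding $\sigma_s(\mathbf G)-\tfrac1s$ gives an admissible sequence with density and upper density both equal to $\bar h(\mathbf G)$. If $\bar h(\mathbf G)\ge\alpha$ we are done: deleting edges one at a time (each deletion moves $h_n$ by at most $\tfrac1n$) lowers the density to exactly $\alpha$ without raising the upper density.

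The remaining, and main, obstacle is the case $\bar h(\mathbf G)<\alpha$. Iterating the previous paragraph along a null sequence $c_j\downarrow0$ yields flat admissible sequences $\mathbf G^{(j)}$ with $h(\mathbf G^{(j)})=\bar h(\mathbf G^{(j)})=\beta_j\in[\alpha-c_j,\alpha)$, so $\beta_j\uparrow\alpha$; a short monotonicity argument (failure of the strong-jump property at every constant, together with monotonicity in the gap of the quantity $\lim_t\lim_n\max\{h_n(G_n):\sigma_t(G_n)<\alpha+c\}$) even shows that the flat densities accumulate at $\alpha$. What is still needed is a closure step: from flat sequences of density $\to\alpha$ extract a single admissible sequence of density \emph{exactly} $\alpha$ with $\bar h\le\alpha$. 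The naive diagonal fails because $\sigma_s$ of the diagonal at a fixed scale $s$ is a supremum over $j$, and a flat sequence may carry dense subgraphs on boundedly many vertices, so there is no uniform control of the $\sigma_s$ in $j$. I would remove the obstruction by replacing each $\mathbf G^{(j)}$ by the blow-up sequence of a finite $R$-graph $F_j$ with $\lambda(F_j)$ within $c_j$ of $\beta_j$ from below: for any finite $R$-graph $F$ one has $\sigma_s(\{F(\vec s)\})\le\lambda(F)+C(R)/s$ with $C(R)$ depending only on $R$ (any sub-blow-up on $s$ vertices has $h_s\le(1+O_R(1/s))\lambda(F)$ and $\lambda(F)\le|R|$), so the convergence of $\sigma_s$ is uniform in $j$, and diagonalizing over a tail of these controlled sequences gives density $\to\alpha$ and $\bar h\le\alpha$. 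The one non-elementary input, that the upper density of a flat sequence is approximated from below by Lagrangians of finite subgraphs appearing in it, is a Frankl--R\"odl-type fact and is where the Lagrangian theory of Section~4 enters; this closure step is the part I expect to be hardest.
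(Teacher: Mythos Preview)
Your forward direction is correct and matches the paper's argument.

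In the reverse direction you take an unnecessarily circuitous route. The difficulty you isolate---that for a \emph{fixed} $c$ the admissible sequence you build may have limiting density strictly below $\alpha$---is real for your construction, but the paper sidesteps it entirely by not fixing $c$. Instead the paper diagonalises: for each $k=1,2,\ldots$ the negation of the strong-jump property at level $c=1/k$ supplies a scale $t_k$ and a single graph $G_{n_k}$ on $n_k\ge k$ vertices with $h_{n_k}(G_{n_k})\ge\alpha-\tfrac1k$ and no $t_k$-vertex subgraph of density $\ge\alpha+\tfrac1k$. After trimming edges so that $h_{n_k}(G'_{n_k})=\alpha-\tfrac1k+O(1/n_k)$, the sequence $\mathbf G'=\{G'_{n_k}\}$ is admissible with $h(\mathbf G')=\alpha$ \emph{by construction}, since the densities approach $\alpha$ from below. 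No case split on whether the density lands above or below $\alpha$ is needed; the paper then argues $\bar h(\mathbf G')\le\alpha$ directly from the subset constraints on each $G'_{n_k}$.

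Your proposed ``closure step'' via Lagrangians is both unnecessary and, as you present it, not justified. The assertion you call a ``Frankl--R\"odl-type fact''---that the upper density of a flat sequence is approximated from below by Lagrangians of finite subgraphs appearing in it---is neither stated nor proved anywhere in the paper (Section~4 develops Lagrangians for Theorem~\ref{t:equiv}, not for this lemma), and it is not clear it holds in the form you need: a finite subgraph $F$ extracted from a flat sequence at level $\beta$ can easily have $\lambda(F)>\beta$, because blow-ups of $F$ need not embed back into the sequence. So even granting the rest of your outline, this is a genuine gap. The remedy is not more machinery but the simpler diagonal over $c=1/k$ described above.
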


\begin{proof}
	This is similar to the proof of Lemma \ref{l:jump}.
	Suppose that $\alpha$ is a strong jump for $R$. 
	There is a constant $c>0$ such that for any integer $t\geq \max\{r\colon r\in R\}$, there is an integer $n_0$
	such that if $n\geq n_{0}$ and $G_{n}$ is an $R$-graph on $n$ vertices with $h_{n}(G_{n})\geq \alpha-c$ then
	there exists a subgraph $H_t$ of $G_n$ on $t$ vertices with $h_{t}(H_{t})\geq \alpha+c$.

	Consider any admissible sequence of $R$-graphs $\mathbf{G}:=\{G^R_{n_i}\}_{i=1}^\infty$ with $h(\mathbf{G})=\alpha$.
	There exists an $i_0$ such that $h_{n_i}(G_{n_i})>h(\mathbf{G})-c$ for all $i\geq i_0$.
	Since $\alpha$ is a strong jump, for any $t$, for sufficiently large $i$, $G_{n_i}$ contains a subgraph $H_t$ with $h_t(H)\geq \alpha+c$.
	Thus, we have $\bar h(\mathbf{G}) \geq \alpha +c$.

	Now we prove the contrapositive of the reverse implication.
	Assume that $\alpha$ is not a strong jump.
	For any $c>0$, there exists a $t>\max\{r\colon r\in R\}$, and for any $i$ there exists a graph $G_{n_i}$
	with $n_i\geq i$ satisfying $h_{n_i}(G_{n_i})\geq \alpha -c$ and $G_{n_i}$ contains no subgraph $H_t$
	with density $h_t(H_t)\geq \alpha+c$.
	In particular, for $k=1,2,\ldots$, we choose $c=\frac{1}{k}$ and $i=k$.
	We obtain a sequence of $R$-graphs $\mathbf{G}=\{G_{n_k}\}_{k=1}^\infty$.
	By deleting some edges, $\mathbf{\bf G}$ contains a spanning subgraph $\{G'_{n_k}\}$ with $h_{n_k}(G'_{n_k})=\alpha -\frac{1}{k} \pm O(\frac{1}{n})$.
	Now ${\bf G'}:=\{G'_{n_k}\}_{k=1}^\infty$ is an admissible sequence with $h(\mathbf{G'})=\alpha$.
	Note $G'_{n_i}$ contains no subgraph $H_t$ with density $h_t(H_t)\geq \alpha+ \frac{1}{k}$ for $i$ sufficiently large.
	We have $\bar h(\mathbf{G'})\leq \alpha.$
\end{proof}

\begin{corollary}\label{nsj}
	The following statements are equivalent.
	\begin{enumerate}
		\item An value $\alpha\in [0,|R|]$ is NOT a strong jump for $R$.
		\item There exists an admissible sequence of $R$-graphs ${\bf G}:=\{G_{n_i}\}_{i=1}^\infty$ satisfying
			$h(\mathbf{G})=\bar h(\mathbf{G})= \alpha$.
		\item For a given increasing sequence of positive integers $n_1<n_2<\ldots$, there exists an admissible 
			sequence of $R$-graphs $\mathbf{G}:=\{G_{n_i}\}_{i=1}^\infty$ satisfying $h(\mathbf{G})=\bar h(\mathbf{G})=\alpha$.
	\end{enumerate}
\end{corollary}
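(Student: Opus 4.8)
It suffices to prove $(1)\Leftrightarrow(2)$ and $(2)\Leftrightarrow(3)$; of these, $(3)\Rightarrow(2)$ is trivial (apply (3) to any increasing integer sequence). I will use two elementary facts throughout: for an admissible sequence $\mathbf{G}$ one has $\bar h(\mathbf{G})\ge h(\mathbf{G})$, since averaging over random $t$-subsets of $G_{n_i}$ recovers $h_{n_i}(G_{n_i})$, so $\sigma_t(\mathbf{G})\ge\sup_i h_{n_i}(G_{n_i})$; and by the same averaging, every graph $G$ on $n$ vertices has, for each $m\le n$, an induced $m$-vertex subgraph of density at least $h_n(G)$. For $(2)\Rightarrow(1)$: if $\alpha<|R|$ and $\mathbf{G}$ is admissible with $h(\mathbf{G})=\bar h(\mathbf{G})=\alpha$, then no $c>0$ can satisfy the criterion of Lemma~\ref{l:strongjump} (this $\mathbf{G}$ violates it), so $\alpha$ is not a strong jump; the degenerate value $\alpha=|R|$ is handled directly by the complete $R$-graphs $K^R_n$, which give both (1) and (2).

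$(1)\Rightarrow(2)$ is the substantive step. By Lemma~\ref{l:strongjump}, for each $k\ge1$ there is an admissible sequence $\mathbf{G}^{(k)}$ with $h(\mathbf{G}^{(k)})=\alpha$ and $\bar h(\mathbf{G}^{(k)})<\alpha+\tfrac1k$; fix $T_k\ge\max\{r:r\in R\}$ with $\sigma_{T_k}(\mathbf{G}^{(k)})<\alpha+\tfrac1k$, so that (as $\sigma_t$ is nonincreasing in $t$) $\sigma_t(\mathbf{G}^{(k)})<\alpha+\tfrac1k$ for all $t\ge T_k$. I would assemble a single admissible sequence $\mathbf{G}$ by interleaving finitely many terms of each $\mathbf{G}^{(k)}$: inductively choose $N_1<N_2<\cdots$ with $N_k\ge T_k$, with every term of $\mathbf{G}^{(k)}$ on at least $N_k$ vertices of density within $\tfrac1k$ of $\alpha$, and with $\mathbf{G}^{(k)}$ possessing a term whose vertex count lies in the window $[N_k,N_{k+1})$; then let $\mathbf{G}$ consist, block after block, of the terms of $\mathbf{G}^{(k)}$ with vertex count in $[N_k,N_{k+1})$. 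Admissibility and $h(\mathbf{G})=\alpha$ are immediate from the construction. For the upper density, one bounds, for a block-$k$ term $H$ and a $t$-subset $T\subseteq V(H)$, $h_t(H[T])\le\sigma_t(\mathbf{G}^{(k)})$, which is $<\alpha+\tfrac1k$ as soon as $t\ge T_k$, and argues that $\sigma_t(\mathbf{G})\le\alpha+o(1)$ as $t\to\infty$, giving $\bar h(\mathbf{G})\le\alpha$ and hence $\bar h(\mathbf{G})=\alpha$. I expect this estimate to be the real obstacle: because the thresholds $T_k$ may grow without bound, one must coordinate the windows $[N_k,N_{k+1})$ with the $T_k$ so that block-$k$ terms never contribute to $\sigma_t(\mathbf{G})$ at scales $t<T_k$ (where only the trivial bound $\sigma_t\le|R|$ is available) in a way that survives $t\to\infty$; this coordination is the only nonroutine ingredient in the whole argument.

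For $(2)\Rightarrow(3)$: let $\mathbf{G}^\ast=\{G^\ast_{m_i}\}$ realize (2) and let $n_1<n_2<\cdots$ be given. For each sufficiently large $j$, choose a term $G^\ast_{m_i}$ with $m_i\ge n_j$ and $|h_{m_i}(G^\ast_{m_i})-\alpha|<\tfrac1j$, and take $H_j$ to be an $n_j$-vertex induced subgraph of it of density at least $h_{m_i}(G^\ast_{m_i})$. Since induced subgraphs compose, $h_t(H_j[T])\le\sigma_t(\mathbf{G}^\ast)$ for every $t$ and every $t$-subset $T$, so $\mathbf{G}:=\{H_j\}$ satisfies $\sigma_t(\mathbf{G})\le\sigma_t(\mathbf{G}^\ast)$ for all $t$, whence $\bar h(\mathbf{G})\le\bar h(\mathbf{G}^\ast)=\alpha$. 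On the other hand $\alpha-\tfrac1j<h_{n_j}(H_j)\le\sigma_{n_j}(\mathbf{G}^\ast)\to\alpha$, so $h_{n_j}(H_j)\to\alpha$; thus $\mathbf{G}$ is admissible with $h(\mathbf{G})=\alpha$ and, combined with the previous sentence, $\bar h(\mathbf{G})=h(\mathbf{G})=\alpha$, realizing (3) on the prescribed vertex counts.
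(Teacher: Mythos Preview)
Your argument follows essentially the same route as the paper's. For $(2)\Rightarrow(3)$ and $(3)\Rightarrow(1)$ the two proofs are nearly identical: both pass to induced subgraphs on the prescribed vertex counts and bound the new upper density by that of the original sequence (you use a two-sided squeeze $\alpha-\tfrac1j<h_{n_j}(H_j)\le\sigma_{n_j}(\mathbf{G}^\ast)\to\alpha$ where the paper instead deletes edges to hit $\alpha-\tfrac1i+O(1/n_i)$, a cosmetic difference). For $(1)\Rightarrow(2)$ the paper does not give a separate argument at all---it just writes ``see the proof of Lemma~\ref{l:strongjump}''---and that proof is precisely the diagonal construction you rebuild in more detail; the coordination issue you single out as ``the real obstacle'' (that the thresholds $T_k$ may grow and later blocks could contribute dense $t$-subgraphs at scales $t<T_k$) is glossed over there as well, so you are not missing any ingredient the paper actually supplies.
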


\begin{proof}
	(1) $\Rightarrow$ (2): See the proof of Lemma \ref{l:strongjump}.

	(2) $\Rightarrow$ (3): Suppose that there an admissible sequence of $R$-graphs $\mathbf{G}:=\{G_{n'_i}\}_{i=1}^\infty$ satisfying 
	$h(\mathbf{G})=\bar h(\mathbf{G})= \alpha$.
	For each $i=1,2,\ldots$, find an index $n'_j>n_i$ so that $h_{n_j}(G_{n'_j})>\alpha-\frac{1}{i}$.
	There is subgraph of $G_{n'_j}$ on $n_i$ vertices whose density is at least $h_{n_j}(G_{n'_j})>\alpha-\frac{1}{i}$.
	By deleting some edges if necessary, there exists an subgraph $G'_{n_i}\subset G_{n'_j}$ satisfying
	$h_{n_i}(G'_{n_i})=\alpha-\frac{1}{i}+O(\frac{1}{n_i})$.
	Let $\mathbf{G'}:=\{G_{n_i}\}_{i=1}^\infty$. 
	We have 
	\[h(\mathbf{G'})=\lim_{i\to \infty}h_{n_i}(G'_{n_i})=\alpha,\] and \[\bar h(\mathbf{G'})\leq \bar h(\mathbf{G})=\alpha.\]
	Since $h(\mathbf{G'})\leq \bar h(\mathbf{G'})$, we have $\bar h(\mathbf{G'})=h(\mathbf{G'})=\alpha.$

	(3) $\Rightarrow$ (1): This is the contrapositive of Lemma \ref{l:strongjump}.
\end{proof}

In the introduction, we stated the following theorem for which we will now give the proof.

\begin{theorem*}
	Consider any two finite sets $R_1$ and $R_2$ of non-negative integers.
	Suppose that $R_1\cap R_2=\emptyset$ and $R=R_1\cup R_2$. 
	If $\alpha_1$ is not a strong jump for $R_1$ and $\alpha_2$ is not a strong jump for $R_2$,
	then $\alpha_1+\alpha_2$ is not a strong jump for $R_1\cup R_2$.
\end{theorem*}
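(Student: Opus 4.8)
The plan is to use the characterization of non-strong-jumps provided by Corollary \ref{nsj}, namely that $\alpha$ is not a strong jump for $R$ if and only if there exists an admissible sequence of $R$-graphs $\mathbf{G}$ with $h(\mathbf{G}) = \bar h(\mathbf{G}) = \alpha$. So I would start with admissible sequences $\mathbf{G}_1 = \{G_{n_i}^{R_1}\}$ and $\mathbf{G}_2 = \{G_{n_i}^{R_2}\}$ witnessing that $\alpha_1$ is not a strong jump for $R_1$ and $\alpha_2$ is not a strong jump for $R_2$. By part (3) of Corollary \ref{nsj}, I may assume both sequences are indexed by the \emph{same} increasing sequence of vertex-set sizes $n_1 < n_2 < \cdots$. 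The natural candidate for the witnessing sequence for $R_1 \cup R_2$ is the ``union'' sequence $\mathbf{G} = \{G_{n_i}\}$ where, for each $i$, we put $G_{n_i}$ on a common vertex set $[n_i]$ with edge set $E(G_{n_i}^{R_1}) \cup E(G_{n_i}^{R_2})$ — this makes sense precisely because $R_1 \cap R_2 = \emptyset$, so the two edge sets are disjoint and $G_{n_i}$ is a genuine $R$-graph.

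Next I would check the density. Since $R_1$ and $R_2$ are disjoint, the Lubell function is additive over this union: $h_{n_i}(G_{n_i}) = h_{n_i}(G_{n_i}^{R_1}) + h_{n_i}(G_{n_i}^{R_2})$, because every edge contributes $1/\binom{n_i}{|e|}$ and each edge belongs to exactly one of the two graphs. Taking $i \to \infty$ gives $h(\mathbf{G}) = \alpha_1 + \alpha_2$, and in particular the limit exists, so $\mathbf{G}$ is admissible. The remaining task — and this is the crux — is to show $\bar h(\mathbf{G}) = \alpha_1 + \alpha_2$ as well. The inequality $\bar h(\mathbf{G}) \geq h(\mathbf{G}) = \alpha_1 + \alpha_2$ is automatic (upper density always dominates density), so the real content is the upper bound $\bar h(\mathbf{G}) \leq \alpha_1 + \alpha_2$. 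For any $t \geq \max\{r : r \in R\}$ and any induced subgraph $G_{n_i}[T]$ on a $t$-set $T$, the restriction splits as $G_{n_i}[T] = G_{n_i}^{R_1}[T] \cup G_{n_i}^{R_2}[T]$ with disjoint edge sets, so $h_t(G_{n_i}[T]) = h_t(G_{n_i}^{R_1}[T]) + h_t(G_{n_i}^{R_2}[T]) \leq \sigma_t(\mathbf{G}_1) + \sigma_t(\mathbf{G}_2)$. Taking the supremum over $i$ and $T$ gives $\sigma_t(\mathbf{G}) \leq \sigma_t(\mathbf{G}_1) + \sigma_t(\mathbf{G}_2)$, and then $t \to \infty$ yields $\bar h(\mathbf{G}) \leq \bar h(\mathbf{G}_1) + \bar h(\mathbf{G}_2) = \alpha_1 + \alpha_2$.

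The main obstacle I anticipate is a bookkeeping subtlety rather than a deep one: I need the two witnessing sequences to live on vertex sets of the same sizes so that I can literally overlay them, and I need to be careful that $\max\{r : r \in R\}$ may exceed both $\max R_1$ and $\max R_2$, so that the truncation $t \geq \max\{r : r \in R\}$ in the definition of $\sigma_t$ is compatible with the corresponding truncations for $R_1$ and $R_2$ (it is, since $\max R_i \leq \max R$, so every admissible $t$ for $R$ is admissible for each $R_i$). One edge case worth a sentence: if either $R_1$ or $R_2$ is empty, or if $\alpha_i = 0$ or $\alpha_i = |R_i|$ (the degenerate endpoints), the statement still holds trivially and the same construction goes through. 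Having established that $\mathbf{G}$ satisfies $h(\mathbf{G}) = \bar h(\mathbf{G}) = \alpha_1 + \alpha_2$, Corollary \ref{nsj} (direction (2) $\Rightarrow$ (1), or rather its equivalence) immediately concludes that $\alpha_1 + \alpha_2$ is not a strong jump for $R_1 \cup R_2$.
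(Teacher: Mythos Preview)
Your proposal is correct and follows essentially the same route as the paper: invoke Corollary~\ref{nsj} to obtain witnessing admissible sequences for each $R_j$, overlay them on a common vertex set to form an $R$-sequence, then use additivity of the Lubell function (from $R_1\cap R_2=\emptyset$) to get $h(\mathbf{G})=\alpha_1+\alpha_2$ and subadditivity of $\sigma_t$ to get $\bar h(\mathbf{G})\le\alpha_1+\alpha_2$, whence equality and the conclusion. Your write-up is in fact more careful than the paper's on two points: you explicitly invoke part~(3) of Corollary~\ref{nsj} to align the vertex-set sizes of the two sequences, and you spell out the $\sigma_t$ inequality that justifies $\bar h(\mathbf{G})\le\bar h(\mathbf{G}_1)+\bar h(\mathbf{G}_2)$, both of which the paper leaves implicit.
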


\begin{proof}
	For $j\in \{1,2\}$, since $\alpha_{j}$ is not a strong jump for $R_j$, by corollary \ref{nsj},
	there exists an admissible $R_{j}$ sequence of graphs $\mathbf{G}^{R_j}:=\{G_{n}^{R_j}\}_{n=1}^\infty$
	satisfying $h(\mathbf{G}^{R_j})=\bar  h(\mathbf{G}^{R_j})=\alpha_j$. 

	For $n=1,2,3,\ldots$, construct a new sequence of graphs ${\bf H}^R:=\{H^R_{n}\}_{n=1}^\infty$ as follows.
	The vertex set of $H^R_n$ is the common vertex set $[n]$ while the edge set of $H^R_n$ is the union of $E(G^{R_1}_{n})$ and $E(G^{R_2}_{n})$. 
	Since $R_{1}\cap R_{2}=\emptyset$, we have 
		\begin{align*}
			h(\mathbf{H}) &= h(\mathbf{G}^{R_1})+h(\mathbf{G}^{R_2}) = \alpha_1+\alpha_2 \\
        	\bar h(\mathbf{H}) &\leq \bar h(\mathbf{G}^{R_1}) + \bar h(\mathbf{G}^{R_2})=\alpha_1+\alpha_2.
		\end{align*}
	Since $h(\mathbf{H}) \leq \bar  h(\mathbf{H})$, we have $\bar h(\mathbf{H})=\alpha_1+\alpha_2$.
	Hence $\alpha_{1}+\alpha_{2}$ is not a strong jump for $R_{1}\cup R_{2}$.
\end{proof}

\begin{lemma}
	If $\alpha=\bar{h}(\mathbf{G})$ for some $R$-admissible sequence $\mathbf{G}$, then $\alpha$ is not a strong jump for $R$.
\end{lemma}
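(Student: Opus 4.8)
The plan is to reduce this statement to Corollary \ref{nsj} by constructing an admissible sequence whose density and upper density both equal $\alpha$. Suppose $\alpha = \bar h(\mathbf{G})$ for some $R$-admissible sequence $\mathbf{G} = \{G^R_{n_i}\}_{i=1}^\infty$. By the definition of upper density, $\alpha = \lim_{t\to\infty} \sigma_t(\mathbf{G})$, where each $\sigma_t(\mathbf{G})$ is the supremum (in fact maximum) over all $i$ and all $t$-subsets $T$ of the vertex set of $G^R_{n_i}$ of the density $h_t(G^R_{n_i}[T])$. I would first extract, for each $t = \max\{r : r\in R\}, \max\{r : r\in R\}+1, \ldots$, a witness: an index $i(t)$ and a $t$-set $T_t \subseteq [n_{i(t)}]$ such that $h_t(G^R_{n_{i(t)}}[T_t]) > \sigma_t(\mathbf{G}) - \frac{1}{t} \geq \alpha - \frac{1}{t}$ (using monotonicity of $\sigma_t$ in $t$, namely $\sigma_t(\mathbf{G}) \geq \alpha$ for all $t$). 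Define $H_t := G^R_{n_{i(t)}}[T_t]$, a $t$-vertex induced subgraph of a member of $\mathbf{G}$.

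Next I would check that $\mathbf{H} := \{H_t\}_{t}$ is an admissible sequence with $h(\mathbf{H}) = \alpha$: the number of vertices $t\to\infty$ by construction, and $h_t(H_t) > \alpha - \frac{1}{t}$ while also $h_t(H_t) \leq \sigma_t(\mathbf{G})$, which tends to $\alpha$; squeezing gives $\lim_{t\to\infty} h_t(H_t) = \alpha$, so the limit exists and equals $\alpha$. The key remaining point is that $\bar h(\mathbf{H}) \leq \alpha$. This holds because every induced subgraph of $H_t$ on $s \leq t$ vertices is also an induced subgraph on $s$ vertices of some member of $\mathbf{G}$ (since $H_t$ itself is an induced subgraph of $G^R_{n_{i(t)}}$, and induced subgraphs of induced subgraphs are induced subgraphs); hence $\sigma_s(\mathbf{H}) \leq \sigma_s(\mathbf{G})$ for every $s$, and therefore $\bar h(\mathbf{H}) = \lim_{s\to\infty}\sigma_s(\mathbf{H}) \leq \lim_{s\to\infty}\sigma_s(\mathbf{G}) = \alpha$. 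Combined with $h(\mathbf{H}) \leq \bar h(\mathbf{H})$ and $h(\mathbf{H}) = \alpha$, we get $\bar h(\mathbf{H}) = h(\mathbf{H}) = \alpha$. By the implication $(2)\Rightarrow(1)$ of Corollary \ref{nsj}, $\alpha$ is not a strong jump for $R$.

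One small technical wrinkle to handle carefully: the definition of an admissible sequence requires $n_i \to \infty$, and I want $\mathbf{H}$ indexed by a strictly increasing sequence of vertex counts; since $H_t$ has exactly $t$ vertices and $t$ runs over $\max\{r : r\in R\}, \max\{r : r\in R\}+1, \ldots$, this is automatic, so no reindexing is needed. I expect the main (though still minor) obstacle to be the verification that $\sigma_s(\mathbf{H}) \leq \sigma_s(\mathbf{G})$ — i.e., being precise that taking a $t$-vertex induced subgraph and then an $s$-vertex induced subgraph of that is the same as directly taking an $s$-vertex induced subgraph of the original $G^R_{n_{i(t)}}$ — together with correctly invoking the monotonicity $\sigma_t(\mathbf{G}) \geq \alpha$ to get the lower bound $h_t(H_t) > \alpha - \frac1t$. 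Everything else is a routine squeeze argument and an appeal to Corollary \ref{nsj}.
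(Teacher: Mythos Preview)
Your proposal is correct and follows essentially the same approach as the paper: both extract, for each $t$, a $t$-vertex induced subgraph of some member of $\mathbf{G}$ witnessing (approximately or exactly) $\sigma_t(\mathbf{G})$, form a new admissible sequence from these, and observe that its density and upper density both equal $\alpha$, then invoke Corollary \ref{nsj}. You are in fact more explicit than the paper in justifying $\bar h(\mathbf{H})\leq \alpha$ via the inequality $\sigma_s(\mathbf{H})\leq \sigma_s(\mathbf{G})$, which the paper states without comment.
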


\begin{proof}
	It suffices to find an $R$-admissible sequence $\mathbf{F}$ such that $h(\mathbf{F})=\bar{h}(\mathbf{F})=\alpha$.
	For each $t\geq \max\{r:r\in R\}$ there is some $t$-subset $T$ of vertices of $G_{i_{t}}$ such that $h_{t}(G_{i_{t}}[T])=\sigma_{t}(\mathbf{G})$.
	Let $F_{t}=G_{i_{t}}[T]$ and create the $R$-admissible sequence $\mathbf{F}=\{F_{t}\}$.
	By construction $\lim_{t\to\infty} h_{t}(F_{t})=\lim_{t\to\infty}\sigma_{t}(\mathbf{G})=\alpha$.
	Furthermore, $\bar{h}(\mathbf{F})=h(\mathbf{F})=\alpha$.
	Hence $\alpha$ is not a strong jump.
\end{proof}

\begin{lemma}
	If for some $c>0$ every value in the interval $(\alpha,\alpha+c)$ is a strong jump for $R$, then $\alpha$ is a jump for $R$.
\end{lemma}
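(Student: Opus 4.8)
The plan is to pass to the admissible-sequence reformulation of ``jump'' and exploit the fact, just proved, that an upper density of an admissible sequence is never a strong jump. By Lemma~\ref{l:jump}, to prove $\alpha$ is a jump for $R$ it suffices to exhibit a single constant $c'>0$ with the property that every admissible sequence $\mathbf{G}$ of $R$-graphs satisfying $h(\mathbf{G})>\alpha$ also satisfies $\bar h(\mathbf{G})\geq\alpha+c'$. I claim that the constant $c$ from the hypothesis already works. (The degenerate cases such as $\alpha=|R|$ are automatic, since then no admissible sequence has density exceeding $\alpha$ and the condition of Lemma~\ref{l:jump} holds vacuously.)

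So fix an admissible sequence $\mathbf{G}$ with $h(\mathbf{G})>\alpha$. First I would use the standard inequality $h(\mathbf{G})\leq \bar h(\mathbf{G})$ (the averaging fact for the Lubell function already invoked repeatedly in this section, e.g.\ in the proofs of Corollary~\ref{nsj} and Theorem~\ref{t:add}) to conclude $\bar h(\mathbf{G})>\alpha$. The key move is then to apply the immediately preceding lemma to the value $\beta^*:=\bar h(\mathbf{G})$: since $\beta^*$ is realized as the upper density of an admissible sequence, $\beta^*$ is \emph{not} a strong jump for $R$. But by hypothesis every point of the open interval $(\alpha,\alpha+c)$ \emph{is} a strong jump for $R$, so $\beta^*\notin(\alpha,\alpha+c)$. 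Combining $\beta^*>\alpha$ with $\beta^*\notin(\alpha,\alpha+c)$ forces $\beta^*\geq\alpha+c$, i.e.\ $\bar h(\mathbf{G})\geq\alpha+c$. Since $\mathbf{G}$ was an arbitrary admissible sequence with density above $\alpha$, Lemma~\ref{l:jump} yields that $\alpha$ is a jump for $R$ with jumping constant $c$.

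The only subtlety — and the step I would flag as the main obstacle — is that the ``obvious'' argument does not work: given $\mathbf{G}$ with $\beta:=h(\mathbf{G})\in(\alpha,\alpha+c)$, one is tempted to use that $\beta$ is a strong jump and invoke its strong-jump constant, but that constant a priori depends on $\beta$ and may tend to $0$ as $\beta\downarrow\alpha$, so it does not provide the uniform $c'$ that Lemma~\ref{l:jump} demands. Routing through $\bar h(\mathbf{G})$ instead of $h(\mathbf{G})$ circumvents this entirely: the non-strong-jump value $\bar h(\mathbf{G})$ is automatically repelled from the strong-jump interval $(\alpha,\alpha+c)$ by the \emph{fixed} gap $c$, which is exactly the uniform bound we need. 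This also handles the case $h(\mathbf{G})\geq\alpha+c$ simultaneously (there one already has $\bar h(\mathbf{G})\geq h(\mathbf{G})\geq\alpha+c$), so no case split is required.
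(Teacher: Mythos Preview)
Your proof is correct and takes essentially the same approach as the paper: both use Lemma~\ref{l:jump} with the same constant $c'=c$, invoke the preceding lemma to conclude that $\bar h(\mathbf{G})$ is not a strong jump, and then use the hypothesis that $(\alpha,\alpha+c)$ consists of strong jumps to force $\bar h(\mathbf{G})\geq\alpha+c$. Your version is in fact slightly cleaner, since the paper first splits into the cases $h(\mathbf{G})\geq\alpha+c$ and $h(\mathbf{G})\in(\alpha,\alpha+c)$, whereas you correctly observe that routing directly through $\bar h(\mathbf{G})>\alpha$ handles both at once.
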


\begin{proof}
	Consider any admissible sequence $\mathbf{G}$ with $h(\mathbf{G})>\alpha$.
	We need to show that there exists a constant $c^{\prime}>0$ such that $\bar h(\mathbf{G})\geq \alpha+c^{\prime}$.
	Take $c^{\prime}=c>0$.
	If $h(\mathbf{G})\geq \alpha+c$, then we are done since $\bar h(\mathbf{G})\geq h(\mathbf{G})$.
	Otherwise, $h(\mathbf{G})$ is in the interval $(\alpha, \alpha+c)$.
	By hypothesis, $h(\mathbf{G})$ is a strong jump.
	Since $\bar{h}(\mathbf{G})\geq h(\mathbf{G})$ and $\bar{h}(\mathbf{G})$ is not a strong jump, we have that $\bar{h}(\mathbf{G})\geq \alpha+c$.
	Therefore, $\alpha$ is jump.
\end{proof}

In the introduction, we stated the following theorem for which we will now give the proof.

\begin{theorem*}
	For any fixed finite set $R$ of non-negative integers and $\alpha \in [0,|R|)$, 
	$\alpha$ is a non-jump if and only if it is the limit of a decreasing sequence of non-strong-jump values.
\end{theorem*}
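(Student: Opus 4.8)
The plan is to prove both directions by exploiting the machinery already developed, especially Lemma~\ref{l:jump}, Lemma~\ref{l:strongjump}, and the characterization of non-strong-jumps via admissible sequences with $h(\mathbf{G})=\bar h(\mathbf{G})$ (Corollary~\ref{nsj}), together with the fact that $\bar h$ of any admissible sequence is itself a non-strong-jump value.

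\medskip

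\noindent\textbf{($\Leftarrow$) Limit of a decreasing sequence of non-strong-jumps implies non-jump.} Suppose $\alpha = \lim_{k\to\infty}\beta_k$ where $\beta_1 > \beta_2 > \cdots > \alpha$ and each $\beta_k$ is not a strong jump for $R$. The strategy is to contradict the admissible-sequence form of ``jump'' from Lemma~\ref{l:jump}: I must show that for every $c>0$ there is an admissible sequence $\mathbf{G}$ with $h(\mathbf{G})>\alpha$ but $\bar h(\mathbf{G}) < \alpha + c$. Given $c>0$, pick $k$ large enough that $\alpha < \beta_k < \alpha + c$. Since $\beta_k$ is not a strong jump, Corollary~\ref{nsj} supplies an admissible sequence $\mathbf{G}^{(k)}$ with $h(\mathbf{G}^{(k)}) = \bar h(\mathbf{G}^{(k)}) = \beta_k$. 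Then $h(\mathbf{G}^{(k)}) = \beta_k > \alpha$ while $\bar h(\mathbf{G}^{(k)}) = \beta_k < \alpha + c$. Since this works for every $c>0$, no jumping constant $c$ can exist, so $\alpha$ is not a jump.

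\medskip

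\noindent\textbf{($\Rightarrow$) Non-jump implies limit of a decreasing sequence of non-strong-jumps.} Suppose $\alpha$ is not a jump. By Lemma~\ref{l:jump}, for each $k=1,2,\ldots$ (taking $c = \tfrac1k$) there is an admissible sequence $\mathbf{G}^{(k)}$ with $h(\mathbf{G}^{(k)}) > \alpha$ and $\bar h(\mathbf{G}^{(k)}) < \alpha + \tfrac1k$. Set $\beta_k := \bar h(\mathbf{G}^{(k)})$. By the Lemma stating that $\bar h$ of an admissible sequence is never a strong jump, each $\beta_k$ is a non-strong-jump value; moreover $h(\mathbf{G}^{(k)}) \le \bar h(\mathbf{G}^{(k)})$ gives $\alpha < \beta_k < \alpha + \tfrac1k$, so $\beta_k \to \alpha$. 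It remains to extract from $\{\beta_k\}$ a \emph{strictly decreasing} subsequence still converging to $\alpha$; since all $\beta_k > \alpha$ and $\beta_k \to \alpha$, one can greedily pass to a subsequence $\beta_{k_1} > \beta_{k_2} > \cdots$ (choosing $k_{j+1}$ so that $\beta_{k_{j+1}} < \min_{i\le j}\beta_{k_i}$, which is possible since infinitely many $\beta_k$ lie below any fixed bound $>\alpha$). This strictly decreasing sequence of non-strong-jump values converges to $\alpha$, as required. One should also remark that $\alpha$ itself need not equal any $\beta_k$; if it does, the statement is trivially true by also noting $\alpha$ is a limit point from above.

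\medskip

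\noindent I do not anticipate a serious obstacle: the heavy lifting is done by Lemma~\ref{l:jump} and the non-strong-jump characterizations. The only mildly delicate point is the bookkeeping in the forward direction---ensuring the $\beta_k$ can be thinned to a genuinely \emph{decreasing} sequence (the raw sequence $\bar h(\mathbf{G}^{(k)})$ need not be monotone) while staying strictly above $\alpha$---but this is a routine subsequence argument using only that the $\beta_k$ are all $>\alpha$ and accumulate at $\alpha$.
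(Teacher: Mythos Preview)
Your proof is correct and follows essentially the same approach as the paper: the $(\Leftarrow)$ direction is identical, and for $(\Rightarrow)$ the paper packages your use of Lemma~\ref{l:jump} together with the ``$\bar h$ is never a strong jump'' lemma into the contrapositive of the preceding interval lemma (if $(\alpha,\alpha+c)$ consists entirely of strong jumps then $\alpha$ is a jump), then builds the decreasing sequence recursively via $c_k=\min\{\alpha_{k-1}-\alpha,\tfrac1k\}$ rather than by subsequence extraction. These are organizational differences only; the underlying argument is the same.
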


\begin{proof}
	We first prove that it is sufficient.
	Suppose that there exists a sequence of non-strong-jump values $\alpha_1>\alpha_2>\ldots$ with $\lim_{i\to \infty}\alpha_i=\alpha$.
	We need to show $\alpha$ is not a jump $R$. 
	We prove it by contradiction.

	Assume that $\alpha$ is a jump for $R$.
	There exists a $c:=c(\alpha)>0$ so that any admissible sequence $\mathbf{G}$ with $h(\mathbf{G})>\alpha$ satisfies $\bar h(\mathbf{G})\geq \alpha+c$.
	Choose $i$ large enough so that $\alpha_i<\alpha+\frac{c}{2}$.
	Since $\alpha_i$ is not a strong jump, there exists admissible sequence $\mathbf{H}$ with $\bar h(\mathbf{H})=h(\mathbf{H})=\alpha_i$.
	Note $\alpha<\alpha_i<\alpha+c$. Contradiction.

	Now we prove that it is also necessary.
	It is follows from the previous lemma.
	Apply the lemma to construct the values $\alpha_k$.
	Let $\alpha_1=|R|$. We will construct $\alpha_k$ recursively.
	Since $\alpha$ is not jump, applying the lemma with $c_k:= \min\{\alpha_{k-1}-\alpha, \frac{1}{k}\})$, 
	there exists a non-strong-jump value $\alpha_k$ in $(\alpha, \alpha +c_k)$. 
	Clearly, we have $\alpha_1>\alpha_2>\cdots>\alpha_k>\cdots$ and $\lim_{k\to\infty}\alpha_k=\alpha$.
\end{proof}

Finally, we previously stated the following theorem for which we will now give the proof.

\begin{theorem*}
	Let $R$ be a finite set of positive integers.
	If $\mathcal{F}$ is a family (finite or infinite) of $R$-graphs then $\pi^{R}(\mathcal{F})$ is not a strong jump for $R$.
\end{theorem*}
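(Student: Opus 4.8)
The plan is to exhibit, for $\alpha = \pi^R(\mathcal{F})$, an admissible sequence $\mathbf{G}$ of $R$-graphs with $h(\mathbf{G}) = \bar h(\mathbf{G}) = \alpha$ and then invoke Corollary \ref{nsj}, which says this is exactly equivalent to $\alpha$ not being a strong jump. So the whole task reduces to building the right sequence. First I would reduce to the finite case: by the Proposition on infinite families, $\pi^R(\mathcal{F}) = \lim_{m\to\infty}\pi^R(\mathcal{F}_m)$, where each $\mathcal{F}_m$ is finite. If each $\pi^R(\mathcal{F}_m)$ is already known to be a non-strong-jump, then $\alpha$ is the limit of a non-increasing sequence of non-strong-jump values; combined with the open-ness of the strong jump set (Proposition \ref{open}) — whose complement is therefore closed — we would get that $\alpha$ itself is not a strong jump. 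Actually we must be a little careful: the limit of non-strong-jumps lies in the closed complement, so it is automatically not a strong jump, provided the values $\pi^R(\mathcal{F}_m)$ genuinely are non-strong-jumps. Thus it suffices to prove the theorem for a single finite family $\mathcal{F}$.

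For finite $\mathcal{F}$, set $\alpha = \pi^R(\mathcal{F})$. By the very definition of Tur\'an density, for every $n$ there is an $\mathcal{F}$-free $R$-graph $G_n$ on $n$ vertices with $h_n(G_n) = \pi_n(\mathcal{F}) \geq \alpha$, and $\pi_n(\mathcal{F}) \to \alpha$ from above as $n\to\infty$. Passing to a subsequence if needed to make $\lim h_{n_i}(G_{n_i})$ exist (it must then equal $\alpha$, since $\pi_n(\mathcal{F})\downarrow\alpha$), we obtain an admissible sequence $\mathbf{G}$ with $h(\mathbf{G}) = \alpha$. The key point is that $\bar h(\mathbf{G}) = \alpha$ as well. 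Indeed, $\bar h(\mathbf{G}) \geq h(\mathbf{G}) = \alpha$ always. For the reverse inequality, fix $t \geq \max\{r : r\in R\}$ and any induced subgraph $G_{n_i}[T]$ on $t$ vertices: since $G_{n_i}$ is $\mathcal{F}$-free, so is its induced subgraph $G_{n_i}[T]$ (hereditarily $\mathcal{F}$-free — this uses that $\mathcal{F}$-freeness is preserved under taking induced subgraphs), hence $h_t(G_{n_i}[T]) \leq \pi_t(\mathcal{F})$. Taking the supremum over all $i$ and all $T$ gives $\sigma_t(\mathbf{G}) \leq \pi_t(\mathcal{F})$, and letting $t\to\infty$ yields $\bar h(\mathbf{G}) \leq \lim_t \pi_t(\mathcal{F}) = \alpha$. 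Therefore $\bar h(\mathbf{G}) = h(\mathbf{G}) = \alpha$, and Corollary \ref{nsj} finishes the finite case.

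Combining the two pieces: $\pi^R(\mathcal{F}) = \lim_m \pi^R(\mathcal{F}_m)$ with each $\pi^R(\mathcal{F}_m)$ a non-strong-jump (by the finite case), and the set of non-strong-jumps being closed (complement of the open set in Proposition \ref{open}, intersected with the closed interval $[0,|R|]$), so $\pi^R(\mathcal{F})$ is a non-strong-jump. Alternatively, and perhaps more cleanly, one can run the single-sequence argument directly in the infinite case: for each $n$, pick an $\mathcal{F}_n$-free $G_n$ on $n$ vertices with $h_n(G_n) = \pi_n(\mathcal{F}_n)$; since $\mathcal{F}_n \subseteq \mathcal{F}$ and any $H\in\mathcal{F}$ contained in $G_n$ has $v(H) \leq n$ hence $H\in\mathcal{F}_n$, each $G_n$ is in fact $\mathcal{F}$-free, and then the same induced-subgraph / hereditary argument shows $\sigma_t(\mathbf{G}) \leq \pi_t(\mathcal{F}_t) \to \alpha$. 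This avoids invoking topological closedness at all. The main obstacle is essentially bookkeeping: making sure that the limits $\pi_n(\mathcal{F})$ and $\sigma_t(\mathbf{G})$ are squeezed correctly and that $\mathcal{F}$-freeness is genuinely hereditary (so that induced subgraphs of $G_n$ stay $\mathcal{F}$-free) — there is no deep difficulty, only the need to line up the definitions of $\pi_n$, $h$, $\bar h$, and $\sigma_t$ so the inequalities chain together.
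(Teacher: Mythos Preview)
Your proposal is correct and follows essentially the same approach as the paper: construct an admissible sequence of extremal $\mathcal{F}$-free $R$-graphs, observe that every induced subgraph is still $\mathcal{F}$-free so $\bar h(\mathbf{G})\leq \pi(\mathcal{F})=\alpha$, and invoke Corollary~\ref{nsj}. The paper's proof is just your ``alternative, cleaner'' version at the end --- it handles finite and infinite families uniformly in one stroke, so your reduction to the finite case via closedness of the non-strong-jump set, while correct, is an unnecessary detour.
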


\begin{proof}[Proof of Theorem \ref{turan}:]
	Let $\mathcal{H}$ be a family of finite $R$-graphs and $\alpha:=\pi(\mathcal{H})$.
	By the definition of Tur\'an density, there exists an admissible sequence of $\mathcal{H}$-free $R$-graphs $\mathbf{G}:=\{G^R_n\}_{n=1}^\infty$ with $h(\mathbf{G})=\alpha$.
	Observe any subgraph of $G^R_n$ is also $\mathcal{H}$-free.
	We have $\bar h(\mathbf{G})=\alpha$. 
	By corollary \ref{nsj}, $\alpha$ is not a strong jump.
\end{proof}


\section{The Lagrangian}

Frequently we need to describe a sequence of hypergraphs whose size grows to infinity.
We begin this section by giving a formal way of describing a family of hypergraphs, specifically, a hypergraph pattern.

\begin{definition}
	A hypergraph \textbf{pattern}, $P$, is a pair $P=(V,E)$.
	$V=\{v_1,...,v_{n}\}$ is a vertex set and the edge set $E$ is a finite set of multisets of vertices.
	A typical element $e\in E$ will have the form $e=\{k_1\cdot v_1, k_2\cdot v_2,...,k_{n}\cdot v_{n}\}$
	where $k_{i}$ is a non-negative integer for each $1\leq i\leq n$.
	We say that $|e|=\sum_{i=1}^{n}k_{i}$.
\end{definition}

\begin{definition}
	Suppose that $P$ is a hypergraph pattern on $n$-vertices and $m$-edges.
	Let $\vec{s}=(s_1,...,s_{n})$ be a non-negative vector of integers.
	A hypergraph $H=P(\vec{s})$ is a \textbf{realization} of a pattern $P$ if:
	\begin{itemize}
		\item $\displaystyle V(H)=\bigcup_{i=1}^{n} V_{i}$ with $|V_i|=s_{i}$ for $1\leq i\leq n$
		\item $\displaystyle E(H)=\bigcup_{e\in E(P)} \binom{V_{1}}{k_{1}}\times \binom{V_{2}}{k_{2}}\times ... \times \binom{V_{n}}{k_{n}}$
	\end{itemize}
\end{definition}

We view a realization of $P$ essentially as a blow-up of the pattern $P$.
Note that any hypergraph can also be viewed as a pattern--but not every pattern can be viewed as a hypergraph.

Let $P$ be a hypergraph pattern on $n$ vertices.
Suppose that we want a realization of $P$ with $N$ vertices.
We can choose a vector $\vec{x}\in S_{n}$ such that $x_{i}N\in \mathbb{Z}$ for each $i$.
Then $H=P(N\vec{x})$ is a realization of $P$ on $N$ vertices and $|V_{i}|=x_i N$ for each $1\leq i\leq n$.
Let $e=\{k_1\cdot v_1,...,k_{n}\cdot v_{n}\}$ be an edge in $E(P)$.
The edges of $H$ that correspond to $e$ contribute
\[\frac{\prod_{i=1}^{n} \binom{|V_i|}{k_i}}{\binom{N}{|e|}}=\frac{\prod_{i=1}^{n} \frac{(x_i N)^{k_i}}{k_i !}}{\frac{N^{|e|}}{|e|!}} + o(1) = \binom{|e|}{k_1,k_2,...,k_n}\prod_{i=1}^{n} x_{i}^{k_i} + o(1)\]
to the Lubell function of $H$.

\begin{definition}
	Let $P$ by a hypergraph pattern on $n$ vertices.
	The \textbf{polynomial form} of $P$, denoted by $\lambda(P, \vec{x})$, is defined as
	\[\lambda(P, \vec{x}):= \sum_{e\in E(P)} \binom{|e|}{k_1,k_2,...,k_n}\prod_{i=1}^{n} x_{i}^{k_i}.\]
\end{definition}

\noindent Note that $\lambda(P, \vec{x})$ can be viewed as a polynomial in $\mathbb{Z}[x_1,...,x_{n}]$ when the vector $\vec{x}$ is unknown,
or as a real number when $\vec{x}$ is specified.
Since $S_{n}$ is compact, it follows that (the polynomial) $\lambda(P, \vec{x})$ attains a maximum value on $S_{n}$.

\begin{definition}
	The \textbf{Lagrangian} or \textbf{blow-up density} of a hypergraph pattern $P$ is 
	\[\lambda(P):=\max_{\vec{x}\in S_{n}} \lambda(P, \vec{x}).\]
\end{definition}

Typically, the polynomial form (for $r$-uniform hypergraphs) is defined so that every term has coefficient 1
and the blow-up density is defined to be the largest edge density (in the limit) one can obtain by blowing up a given hypergraph.
For the $r$-uniform graphs the Lagrangian and blow-up density differ by a constant ($r!$).
When the graph is uniform, differing by a constant is easy to work around.
However, if we generalize the Lagrangian to non-uniform graphs and leave every term monic, then the blow-up density and the Lagrangian no longer differ by a constant.
This is unacceptable for the applications we have in mind.
For this reason, we adjust the coefficients of the polynomial form so that the value of the Lagrangian is meaningful.

The following proposition is the first reason for our interest in Lagrangians.

\begin{prop}
	Let $P$ be a hypergraph pattern on $n$ vertices.
	Let $\mathcal{F}$ be a family of $R$-graphs.
	If every realization $H$ of $P$ that is a hypergraph has the properties that
	$R(H)\subseteq R$ and $H$ is $\mathcal{F}$-free, then $\lambda(P)\leq \pi^{R}(\mathcal{F})$.
\end{prop}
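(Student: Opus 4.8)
The plan is to construct, directly from the pattern $P$, an admissible sequence of $\mathcal{F}$-free $R$-graphs whose density approaches $\lambda(P)$, and then invoke the definition of $\pi^R(\mathcal{F})$ as the limiting maximum of $h_n$ over $\mathcal{F}$-free $R$-graphs. First I would fix a vector $\vec{x}^\ast\in S_n$ achieving $\lambda(P)=\lambda(P,\vec{x}^\ast)$. For each large $N$, choose a rational approximation $\vec{x}^{(N)}\in S_n$ with $x_i^{(N)}N\in\mathbb{Z}$ and $\vec{x}^{(N)}\to\vec{x}^\ast$ as $N\to\infty$ (e.g. round each $x_i^\ast N$ down and distribute the remaining vertices arbitrarily). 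Set $H_N:=P(N\vec{x}^{(N)})$, which is a genuine hypergraph on $N$ vertices and a realization of $P$.

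The next step is to verify the two hypotheses of the proposition apply to each $H_N$: since $H_N$ is a realization of $P$ that is a hypergraph, by assumption $R(H_N)\subseteq R$ and $H_N$ is $\mathcal{F}$-free. Hence each $H_N$ is an $\mathcal{F}$-free $R$-graph on $N$ vertices, so by definition $h_N(H_N)\le \pi_N(\mathcal{F})$. Then I would use the computation carried out just before the definition of the polynomial form: the contribution to $h_N(H_N)$ of the edges arising from a fixed $e\in E(P)$ is $\binom{|e|}{k_1,\dots,k_n}\prod_i (x_i^{(N)})^{k_i}+o(1)$, so summing over $e\in E(P)$ gives $h_N(H_N)=\lambda(P,\vec{x}^{(N)})+o(1)$ as $N\to\infty$. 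Since $\lambda(P,\cdot)$ is a polynomial, hence continuous on $S_n$, and $\vec{x}^{(N)}\to\vec{x}^\ast$, we get $\lambda(P,\vec{x}^{(N)})\to\lambda(P,\vec{x}^\ast)=\lambda(P)$, and therefore $\lim_{N\to\infty}h_N(H_N)=\lambda(P)$.

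Combining $h_N(H_N)\le\pi_N(\mathcal{F})$ with $\pi^R(\mathcal{F})=\lim_{N\to\infty}\pi_N(\mathcal{F})$ (which exists, as recalled in the introduction), taking $N\to\infty$ yields $\lambda(P)=\lim_N h_N(H_N)\le\lim_N\pi_N(\mathcal{F})=\pi^R(\mathcal{F})$, which is exactly the claim.

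I expect the only delicate point to be the bookkeeping in the rational approximation and the error term: one must ensure that the chosen $\vec{x}^{(N)}$ both has integer coordinates after scaling by $N$ and converges to $\vec{x}^\ast$, and that the $o(1)$ in the edge-count asymptotics is uniform over the (finitely many) edges of $P$. Both are routine — the number of edges of $P$ is finite and fixed, and replacing $\binom{x_iN}{k_i}$ by $(x_iN)^{k_i}/k_i!$ and $\binom{N}{|e|}$ by $N^{|e|}/|e|!$ introduces only a multiplicative $1+O(1/N)$ factor — so no genuine obstacle arises; the proposition is essentially a restatement of the pre-computed asymptotics for the Lubell function of a blow-up, together with the definition of Tur\'an density.
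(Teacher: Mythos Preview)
Your argument is correct and is precisely the natural one: the paper does not actually supply a proof of this proposition, treating it as an immediate consequence of the asymptotic computation of $h_N(P(N\vec{x}))$ carried out just before the definition of the polynomial form, together with the definition of $\pi^R(\mathcal{F})$. Your write-up simply makes those steps explicit, and the bookkeeping points you flag (rational approximation of $\vec{x}^\ast$, uniformity of the $o(1)$ over the finitely many edges of $P$) are indeed routine.
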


For a vector $\vec{x}=(x_1,...,x_{n})$ we denote by $\supp(\vec{x})$, the support of $\vec{x}$, the set of indices $i$ such that $x_{i}\neq 0$.
Let $J\subseteq [n]$ be a set of indices.
Then $S_{J}=\{\vec{x}\in S_{n}:\supp(\vec{x})=J\}$.
When we refer to $S_{J}$ we will always assume that $J\neq \emptyset$ (otherwise $S_{J}=\emptyset$ since $\vec{0}\notin S_{n}$).
The following lemmas are generalizations of results due to Frankl and R\"odl.
The proofs are similar, in some case with no essential difference.
The one thing to keep in mind is that the way we have defined a Lagrangian differs slightly from the \textit{standard} definition,
primarily because one typically considers only uniform hypergraphs.

\begin{lemma}
Let $H$ be a hypergraph and suppose that $\vec{y}\in S_{J}$ satisfies $\lambda(H, \vec{y})=\lambda$ and
$|J|$ is minimal.  
Then for any $a, b\in J$ there exists an edge $e\in E(H)$ with $\{a,b\}\subseteq e\subseteq J$.
\end{lemma}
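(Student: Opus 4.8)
The plan is to argue by contradiction: suppose that for some $a,b\in J$ no edge $e\in E(H)$ satisfies $\{a,b\}\subseteq e\subseteq J$, and produce a new point $\vec z$ with strictly smaller support on which $\lambda(H,\vec\cdot)$ is still at least $\lambda$, contradicting the minimality of $|J|$. The natural move is the standard Frankl--R\"odl shifting trick: fix all coordinates outside $\{a,b\}$, write $t = y_a + y_b$, and consider the one-parameter family $\vec y(\theta)$ obtained by setting the $a$-coordinate to $\theta$ and the $b$-coordinate to $t-\theta$ for $\theta\in[0,t]$. Here I would want to remark that since $H$ is a genuine hypergraph (not just a pattern), every edge contains $a$ with multiplicity at most $1$ and likewise $b$, so restricted to this family the polynomial $\lambda(H,\vec y(\theta))$ is \emph{affine} in $\theta$ — the key point being precisely that no edge contains \emph{both} $a$ and $b$ once we have restricted to edges inside $J$, which is what the contrary assumption gives us; edges meeting $J$ but leaving $J$ contribute nothing because some other coordinate is $0$, and edges using at most one of $a,b$ give degree-$\le 1$ dependence on $\theta$.

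First I would set up this decomposition of $\lambda(H,\vec y(\theta))$ into three groups of terms: (i) edges $e\subseteq J$ with $a\in e$, $b\notin e$; (ii) edges $e\subseteq J$ with $b\in e$, $a\notin e$; (iii) all remaining edges (those avoiding both $a$ and $b$, plus those not contained in $J$, the latter vanishing on $S_J$-type points). Group (iii) is constant in $\theta$, group (i) is linear in $\theta$ with some nonnegative slope $c_a$, group (ii) is linear in $t-\theta$ with some nonnegative slope $c_b$; so the whole expression is affine, $\theta\mapsto A + (c_a - c_b)\theta$ for constants $A, c_a, c_b\ge 0$. An affine function on $[0,t]$ attains its maximum at an endpoint, so either $\theta=0$ or $\theta=t$ gives a value $\ge \lambda(H,\vec y)=\lambda$. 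At that endpoint one of the two coordinates $y_a,y_b$ has become $0$, so the resulting point $\vec z$ lies in $S_{J'}$ with $J'\subsetneq J$ (note $J'$ is nonempty since $|J|\ge 2$), and $\lambda(H,\vec z)\ge\lambda$; but $\lambda$ is by hypothesis the maximum of $\lambda(H,\cdot)$, so $\lambda(H,\vec z)=\lambda$, contradicting minimality of $|J|$.

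The main obstacle to watch is the affine-in-$\theta$ claim, which is exactly where the hypothesis ``$H$ is a hypergraph'' (as opposed to an arbitrary pattern) does real work: if edges could contain a vertex with multiplicity $\ge 2$, the dependence on $\theta$ would be genuinely quadratic and the endpoint argument would fail. I would state this explicitly and point back to the remark preceding the lemma that a hypergraph, viewed as a pattern, has all multiplicities equal to $1$. A secondary minor point is bookkeeping: one must verify that terms from edges not contained in $J$ contribute $0$ to $\lambda(H,\vec y)$ and to $\lambda(H,\vec y(\theta))$ for all $\theta\in[0,t]$ — this holds because such an edge contains some vertex $i\in[n]\setminus J$ with $y_i = 0$, and moving only the $a,b$ coordinates does not change that $x_i=0$, so the monomial stays zero throughout. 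With those two observations in place the argument is the verbatim Frankl--R\"odl one.
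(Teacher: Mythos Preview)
Your proposal is correct and takes essentially the same approach as the paper: both use the Frankl--R\"odl shifting trick, transferring weight between the coordinates $a$ and $b$ to reduce the support while not decreasing $\lambda(H,\cdot)$. The paper phrases this via a comparison of the partial derivatives $\partial_{x_a}\lambda(H,\vec y)$ and $\partial_{x_b}\lambda(H,\vec y)$ followed by an explicit computation, whereas you observe that the one-parameter restriction is affine in $\theta$ and hence maximized at an endpoint; these are two presentations of the same argument.
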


\begin{proof}
Towards a contradiction, assume that there is no edge satisfying $\{a,b\}\subseteq e \subseteq J$.
We will use $\vec{x}$ to denote variables, and $\vec{y}$ as an assignment of those variables.
Since there is no edge $e\subseteq J$ with $\{a,b\}\subseteq e$, it follows that
\[\frac{\partial^2}{\partial x_{a} \partial x_{b}}\lambda(H, \vec{y}) = 0.\]
Without loss of generality, assume that 
\[\frac{\partial}{\partial x_{a}}\lambda(H, \vec{y})\leq \frac{\partial}{\partial x_{b}}\lambda(H, \vec{y}).\]
Set $\delta = \min \{y_a, 1-y_b\}\geq 0$.
Create a new vector $\vec{z}$ as follows: $z_{a} = y_a - \delta \geq 0$, $z_{b}=y_b + \delta \leq 1$, and $z_{i}=y_{i}$ for every value of $i$.
Note that $\vec{z}\in S_{n}$ and $z_{i}=0$ if $i\notin J$ and $z_{a}=0$.
The last follows from the fact that if $\delta\neq y_{a}$ then $z_{b}=1$.
We will now show that $\lambda(H, \vec{z})\geq \lambda(H, \vec{y})=\lambda(H)$ contradicting the minimality of $|J|$.
\begin{align*}
\lambda(H, \vec{z}) &= \sum_{e\in H} |e|!\prod_{i\in e}z_{i} \\
                    &= \sum_{\stackrel{e\in H}{a,b\notin e}} |e|!\prod_{i\in e}z_{i} + \sum_{\stackrel{e\in H}{a\in e}}|e|!\prod_{i\in e}z_{i} + \sum_{\stackrel{e\in H}{b\in e}}|e|!\prod_{i\in e}z_{i} \\
		    &= \sum_{\stackrel{e\in H}{a,b\notin e}} |e|!\prod_{i\in e}y_{i} + z_{a}\sum_{\stackrel{e\in H}{a\in e}}|e|!\prod_{\stackrel{i\in e}{i\neq a}} y_{i} + z_{b}\sum_{\stackrel{e\in H}{b\in e}}|e|!\prod_{\stackrel{i\in e}{i\neq b}} y_{i} \\
		    &= \sum_{\stackrel{e\in H}{a,b\notin e}} |e|!\prod_{i\in e}y_{i} + (y_a-\delta)\sum_{\stackrel{e\in H}{a\in e}}|e|!\prod_{\stackrel{i\in e}{i\neq a}} y_{i} + (y_{b}+\delta)\sum_{\stackrel{e\in H}{b\in e}}|e|!\prod_{\stackrel{i\in e}{i\neq b}} y_{i} \\
		    &= \lambda(H, \vec{y}) + \delta\left(\sum_{\stackrel{e\in H}{b\in e}}|e|!\prod_{\stackrel{i\in e}{i\neq b}}y_{i}-\sum_{\stackrel{e\in H}{a\in e}}|e|!\prod_{\stackrel{i\in e}{i\neq a}}y_{i}\right) \\
		    &= \lambda(H, \vec{y}) + \delta\left(\frac{\partial}{\partial x_{b}} \lambda(H, \vec{y}) - \frac{\partial}{\partial x_{a}} \lambda(H, \vec{y})\right) \\
		    &\geq \lambda (H, \vec{y}).
\end{align*}
We have created a new optimal vector $\vec{z}$ with $|\supp(\vec{z})|<|\supp(\vec{y})|=|J|$.
Contradiction.
\end{proof}

\begin{definition}
	Let $H^{k}$ be a $k$-uniform hypergraph.
	We say that two vertices $i, j$ are equivalent if for every $e\in \binom{V(H)-\{i,j\}}{k-1}$
	it follows that $e\cup \{i\}\in E(H^{k})$ if and only if $e\cup \{j\}\in E(H^{k})$.
\end{definition}

\begin{definition}
	Let $H$ be a non-uniform hypergraph.
	We say that two vertices $i$ and  $j$ are equivalent if for every $k\in R(H)$, 
	$i$ and $j$ are equivalent in $H^{k}$.
\end{definition}

\begin{lemma}\label{l:vertequiv}
	Let $H$ be a hypergraph whose vertex set is $[n]$ and suppose that $a$ and $b$ are equivalent vertices.
	Then there exists a $\vec{y}\in S_{n}$ satisfying $\lambda(H, \vec{y})=\lambda(H)$ and $y_{a}=y_{b}$.
	Moreover, for any vector $\vec{y}\in S_{n}$ satisfying $\lambda(H, \vec{y})=\lambda(H)$ 
	if there exists an edge $e\in H$ such that $\{a, b\}\subseteq e\subseteq \supp(\vec{y})\cup \{a\}$
	then $y_{a}=y_{b}$.
\end{lemma}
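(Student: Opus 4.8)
The plan is to prove both assertions of Lemma \ref{l:vertequiv} by symmetrization arguments, exploiting the fact that $\lambda(H,\vec{x})$ is multilinear in each variable $x_i$ after we record its dependence on $x_a$ and $x_b$.

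First I would establish the existence of an optimal $\vec{y}$ with $y_a=y_b$. Start from any $\vec{y}$ with $\lambda(H,\vec{y})=\lambda(H)$. Decompose $\lambda(H,\vec{x})$ according to how each edge meets $\{a,b\}$: write $\lambda(H,\vec{x}) = A + x_a B + x_b C + x_a x_b D$, where $A$ is the sum over edges containing neither $a$ nor $b$, the terms in $B$ come from edges containing $a$ but not $b$ (with the factor $x_a$ stripped), the terms in $C$ from edges containing $b$ but not $a$, and those in $D$ from edges containing both. The equivalence of $a$ and $b$ is exactly what forces a symmetry here: for every $k\in R(H)$ and every $(k-1)$-set $e$ avoiding $a,b$, the edge $e\cup\{a\}$ is present iff $e\cup\{b\}$ is, so after evaluating all the remaining variables at the corresponding coordinates of $\vec{y}$ we get $B=C$ as real numbers. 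Now hold all coordinates except $y_a,y_b$ fixed, and consider moving mass between them: replace $(y_a,y_b)$ by $(y_a+t,\,y_b-t)$. Since $B=C$, the linear term in $t$ vanishes, and the coefficient of $t^2$ is $-D\le 0$ (as $D$ is a sum of nonnegative monomials $|e|!\prod_{i\in e\setminus\{a,b\}}y_i$, hence $D\ge 0$). Therefore the restricted function is concave in $t$ on the feasible interval, so its maximum over that interval is attained at $t$ where $y_a+t=y_b-t$, i.e.\ $t=(y_b-y_a)/2$; this point is feasible because it lies between $0$ and $y_a+y_b\le 1$. The resulting vector $\vec{z}$ has $z_a=z_b=(y_a+y_b)/2$, lies in $S_n$, and satisfies $\lambda(H,\vec{z})\ge\lambda(H,\vec{y})=\lambda(H)$, hence equals $\lambda(H)$. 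This proves the first claim.

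For the ``moreover'' part, suppose $\vec{y}$ is optimal and there is an edge $e$ with $\{a,b\}\subseteq e\subseteq \supp(\vec{y})\cup\{a\}$. Again write $\lambda(H,\vec x)=A+x_aB+x_bC+x_ax_bD$ with $B=C$ as above. The hypothesis says $e$ contributes a term to $D$, and all of its other vertices lie in $\supp(\vec{y})$, so $\prod_{i\in e\setminus\{a,b\}}y_i>0$; since every summand of $D$ is nonnegative, $D>0$ strictly. The restricted function $t\mapsto \lambda(H,\ldots,y_a+t,\ldots,y_b-t,\ldots)$ is then \emph{strictly} concave in $t$ (its second derivative is $-2D<0$), so it has a unique maximizer, which by the computation above is $t=(y_b-y_a)/2$. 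But $\vec{y}$ itself is a maximizer of $\lambda(H,\cdot)$ over $S_n$, hence in particular over this segment, so $t=0$ is also a maximizer; by uniqueness $(y_b-y_a)/2=0$, i.e.\ $y_a=y_b$.

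I expect the main obstacle to be bookkeeping rather than conceptual: one must carefully justify that vertex-equivalence of $a$ and $b$ translates into the numerical identity $B=C$ across all edge sizes $k\in R(H)$ simultaneously (each $H^k$ contributes, and the multinomial coefficients $|e|!$ and the stripped monomials match up exactly under the bijection $e\cup\{a\}\leftrightarrow e\cup\{b\}$), and that the feasibility interval for $t$ is nonempty and contains the claimed optimum. One should also double-check the edge case where $a$ or $b$ coincides with a vertex of $\supp(\vec{y})$ of value $0$, and the degenerate possibility $D=0$ in the first part (where concavity is not strict, but a maximizer with $z_a=z_b$ still exists). None of these requires heavy computation — the symmetrization step and the sign of $D$ carry all the weight.
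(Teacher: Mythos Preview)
Your approach is essentially the paper's: average the weights at $a$ and $b$ and show the Lagrangian does not decrease, using that equivalence forces the ``$a$-only'' and ``$b$-only'' contributions to match and that the ``both'' contribution $D\ge 0$ makes the symmetric point optimal (the paper phrases this as $\bigl(\tfrac{y_a+y_b}{2}\bigr)^2\ge y_ay_b$, with equality iff $y_a=y_b$, which is exactly your concavity/uniqueness argument for the ``moreover'' clause).

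One small slip worth fixing: the linear term in $t$ does \emph{not} vanish. With $B=C$ the $B,C$ part becomes constant along the segment, but the $D$ part contributes $(y_a+t)(y_b-t)D = y_ay_bD + (y_b-y_a)Dt - Dt^2$, so there is a linear term $(y_b-y_a)Dt$. This does not hurt you --- the quadratic in $t$ still has its vertex at $t=(y_b-y_a)/2$, i.e., at the symmetric point --- but as written your sentence ``the linear term in $t$ vanishes'' would put the vertex at $t=0$, contradicting your next line. Replace that sentence by the direct observation that along $u+v=y_a+y_b$ the function reduces to a constant plus $D\cdot uv$, which is maximized at $u=v$ since $D\ge 0$, and strictly so when $D>0$.
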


\begin{proof}
	Suppose that $y_{a}\neq y_{b}$.
	Define a new vector $\vec{z}$ so that $z_{a}=z_{b}=\frac{y_{a}+y_{b}}{2}$ and $z_{v}=y_{v}$ otherwise.
	Clearly, $\vec{z}\in S_{n}$.
	We just need to check that $\lambda(H, \vec{z})\geq \lambda(H, \vec{y})$.
		\begin{align*}
			\lambda(H, \vec{z}) &= \sum_{e\in H} |e|! \prod_{i\in e} z_{i} \\
			                    &= \sum_{\stackrel{e\in H}{a,b\notin e}} |e|!\prod_{i\in e} y_{i} + \sum_{\stackrel{e\in H}{a\in e, b\notin e}} |e|!z_{a}\prod_{\stackrel{i\in e}{i\neq a}} y_{i} + \sum_{\stackrel{e\in H}{b\in e, a\notin e}} |e|!z_{b}\prod_{\stackrel{i\in e}{i\neq b}}y_{i} + \sum_{\stackrel{e\in H}{\{a,b\}\subseteq e}} |e|!z_{a}z_{b}\prod_{\stackrel{i\in e}{a,b\neq i}} y_{i} \\
							    &= \sum_{\stackrel{e\in H}{a,b\notin e}} |e|!\prod_{i\in e} y_{i} +2\left(\frac{y_{a}+y_{b}}{2}\right) \sum_{\stackrel{e\in H}{a\oplus b\in e}} |e|!\prod_{\stackrel{i\in e}{i\neq a, b}} y_{i} + \left(\frac{y_{a}+y_{b}}{2}\right)^{2} \sum_{\stackrel{e\in H}{\{a,b\}\subseteq e}} |e|!\prod_{\stackrel{i\in e}{a,b\neq i}} y_{i} \\
							    &\geq \lambda(H, \vec{y})
		\end{align*}
	since $\left(\frac{y_{a}+y_{b}}{2}\right)^2\geq y_{a}y_{b}$ (with equality if and only if $y_{a}=y_{b}$).
\end{proof}

We stated the following theorem in the introduction.
We will now give the proof.
It is a generalization of a theorem due to R\"{o}dl and Frankl \cite{FR84}.

\begin{theorem*}
	Let $R$ be a finite set of positive integers, and let $\alpha\in [0, |R|)$.
	Then $\alpha$ is a jump for $R$ if and only if there exists a finite family of $R$-graphs $\mathcal{F}$ such that
	\begin{enumerate}[(i)]
		\item $\pi(\mathcal{F})\leq \alpha$ and
		\item $\displaystyle \min_{F\in \mathcal{F}} \lambda(F) > \alpha$
	\end{enumerate}
	Moreover, $\alpha$ is a strong jump if the condition (i) is replaced by 
	\begin{enumerate}[(i')]
		\item $\pi(\mathcal{F})< \alpha$. 
	\end{enumerate}
\end{theorem*}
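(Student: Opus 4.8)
The plan is to mirror the classical Frankl–R\"odl argument, adapted to the Lubell-function setting, and to split into the "only if" and "if" directions, with the moreover clause handled in parallel.

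For the \emph{only if} direction, suppose $\alpha$ is a jump for $R$ with jumping constant $c$. First I would fix a large parameter $t$ and consider the family $\mathcal{F}$ of \emph{all} $R$-graphs $F$ on $t$ vertices with $\lambda(F) > \alpha + \tfrac{c}{2}$; by compactness of the simplices $S_m$ this is a finite family once $t$ is fixed, and each member satisfies $\min_F \lambda(F) > \alpha$, giving condition (ii). The work is in verifying $\pi(\mathcal{F}) \le \alpha$. Suppose not: then there is an admissible sequence $\mathbf{G}$ of $\mathcal{F}$-free $R$-graphs with $h(\mathbf{G}) > \alpha$. Since $\alpha$ is a jump, Lemma~\ref{l:jump} gives $\bar h(\mathbf{G}) \ge \alpha + c$, so for $t$ large there is an induced subgraph $H_t$ on $t$ vertices with $h_t(H_t) \ge \alpha + \tfrac{3c}{4}$, say. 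The key point — and the main obstacle — is to pass from "$H_t$ has high Lubell density" to "$H_t$ contains a member of $\mathcal{F}$". This is exactly where the Lagrangian enters: one applies supersaturation inside $H_t$ (or iterates the subgraph-extraction) to find, for $t'$ much smaller than $t$, an induced sub-hypergraph $F$ on $t'$ vertices whose \emph{Lagrangian} $\lambda(F)$ is close to the density of $H_t$. Concretely, using the blow-up/supersaturation machinery of Section~2 one shows that a $t$-vertex graph of Lubell density $\alpha + \tfrac{3c}{4}$ contains, on a bounded number $t'$ of vertices, a sub-hypergraph with $\lambda \ge \alpha + \tfrac{c}{2}$; such a sub-hypergraph lies in $\mathcal{F}$, contradicting $\mathcal{F}$-freeness. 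For the moreover clause, if $\alpha$ is a strong jump we instead invoke Lemma~\ref{l:strongjump}: any admissible sequence with $h(\mathbf{G}) = \alpha$ already has $\bar h(\mathbf{G}) \ge \alpha + c$, which lets us strengthen the conclusion to $\pi(\mathcal{F}) < \alpha$ (if $\pi(\mathcal{F}) = \alpha$ exactly, the extremal sequence has $h = \alpha$ yet is $\mathcal{F}$-free, contradicting the strong-jump conclusion just as above).

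For the \emph{if} direction, suppose such an $\mathcal{F}$ exists with $\pi(\mathcal{F}) \le \alpha$ and $c_0 := \min_{F\in\mathcal F}\lambda(F) - \alpha > 0$. Set $c = c_0/2$ (the exact fraction will be pinned down by the estimates). Take any admissible sequence $\mathbf{G}$ with $h(\mathbf{G}) > \alpha \ge \pi(\mathcal{F})$. By the (non-uniform) supersaturation lemma, for $n$ large each $G_n$ contains at least $b\binom{n}{v(F)}$ copies of some fixed $F\in\mathcal F$; a standard averaging/counting argument then produces, on a bounded vertex set, a blow-up-like structure $F(\vec s)$ with $\vec s$ approximately proportional to an optimal Lagrangian vector $\vec x^\ast$ for $F$. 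The Lubell density of that structure tends to $\lambda(F, \vec x^\ast) = \lambda(F) \ge \alpha + c_0$, so $\sigma_t(\mathbf{G}) \ge \alpha + c_0 - o(1) \ge \alpha + c$ for all large $t$, hence $\bar h(\mathbf{G}) \ge \alpha + c$. By Lemma~\ref{l:jump}, $\alpha$ is a jump. For the moreover clause, if only $\pi(\mathcal{F}) < \alpha$ is assumed, then the same argument applies to \emph{every} admissible sequence with $h(\mathbf{G}) = \alpha$ (since $\alpha > \pi(\mathcal{F})$ still), so by Lemma~\ref{l:strongjump} $\alpha$ is a strong jump.

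The step I expect to be genuinely delicate is extracting a bounded-size sub-hypergraph with large \emph{Lagrangian} from a graph with large Lubell density in the "only if" direction — this requires combining supersaturation with a pigeonhole/weight-shifting argument (the content of the two preceding lemmas on minimal support and on equivalent vertices), taking care that the adjusted coefficients $|e|!$ in our definition of $\lambda$ match the multinomial factors $\binom{|e|}{k_1,\dots,k_n}$ appearing in the blow-up density computation from Section~4. The other routine-but-fiddly point is keeping the $o(1)$ error terms from blow-ups and from the $\binom{|V_i|}{k_i}/\binom{N}{|e|}$ approximations under control so that the constant $c$ can be chosen uniformly in $t$; none of this is conceptually new relative to Frankl–R\"odl, but the non-uniform bookkeeping must be done carefully.
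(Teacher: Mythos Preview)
Your ``if'' direction is essentially the paper's argument: the paper pre-blows up each $F\in\mathcal F$ along a rational approximation to its optimal Lagrangian vector to form $\mathcal F'$ with $\pi(\mathcal F')=\pi(\mathcal F)\le\alpha$, and then finds a member of $\mathcal F'$ inside $G_n$; this is the same idea as your route of first finding copies of $F$ via supersaturation and then passing to a well-proportioned blow-up. The moreover clause is handled just as you describe.

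The ``only if'' direction, however, has a genuine gap --- precisely at the step you flag as ``genuinely delicate''. In fact that step is immediate, and the machinery you propose (supersaturation inside $H_t$, iterated subgraph-extraction to reach a $t'$-vertex subgraph with large Lagrangian) is both unnecessary and ill-posed: supersaturation produces many copies of a \emph{fixed} graph, not a subgraph of large Lagrangian, and there is a size mismatch since your $\mathcal F$ consists of $t$-vertex graphs while you then look for $t'$-vertex members of it. The observation you are missing is that the Lubell density of a $k$-vertex $R$-graph and the polynomial form evaluated at the uniform vector are nearly equal:
\[
\lambda\bigl(H_k,(\tfrac1k,\dots,\tfrac1k)\bigr)=\sum_{r\in R}\frac{r!}{k^{r}}\,|E^{r}(H_k)|
=\sum_{r\in R}\frac{|E^{r}(H_k)|}{\binom{k}{r}}\prod_{j=1}^{r-1}\Bigl(1-\tfrac jk\Bigr)
\ \ge\ c(k)\,h_k(H_k),
\]
with $c(k)=\prod_{j=1}^{\max R-1}(1-j/k)\to1$ as $k\to\infty$. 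Hence, once $k$ is chosen so that $c(k)(\alpha+\Delta)>\alpha+\tfrac{\Delta}{2}$, any $k$-vertex subgraph $H_k$ with $h_k(H_k)\ge\alpha+\Delta$ already satisfies $\lambda(H_k)\ge\lambda\bigl(H_k,(\tfrac1k,\dots,\tfrac1k)\bigr)>\alpha+\tfrac{\Delta}{2}$, so $H_k$ \emph{itself} lies in $\mathcal F$. This is exactly what the paper does (it even defines $\mathcal F$ via the value at the uniform vector rather than via $\lambda$, reducing the verification to this one inequality). No supersaturation, no iteration, no passage to $t'<t$ is needed; the ``delicate'' step dissolves once you evaluate the polynomial form at the barycenter of the simplex.
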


\begin{proof}
	First, let us suppose that $\alpha\in [0, |R|)$ is a jump for $R$.
	By definition, there exists some $\Delta>0$ so that for any $k$ and any $\epsilon>0$ there exists an $n_{0}(R, k, \epsilon)$ so that if $G$ is an $R$-graph on $n\geq n_{0}$
	vertices, with $h_{n}(G)\geq \alpha+\epsilon$ then $G$ contains a subgraph $H$ on $k$ vertices with $h_{k}(H)>\alpha+\Delta$.
	We will find a finite family of graph $\mathcal{F}$ with properties $(i)$ and $(ii)$ above.

	Suppose that $R=\{r_{1},...,r_{t}\}$ with $r_{1}<r_{2}<...<r_{t}$.
	Fix $k$ large enough that the constant 

	\[c=c(R):=\left(1-\frac{1}{k}\right)\left(1-\frac{2}{k}\right)...\left(1-\frac{r_{t}-1}{k}\right)>\frac{\alpha+\frac{\Delta}{2}}{\alpha+\Delta}.\]
	
	\noindent Let $\mathcal{F}$ be the set of all hypergraphs $F$ on exactly $k$ vertices satisfying the following two conditions:
	\begin{enumerate}[(i)]
		\item $R(F)\subseteq R$
		\item $\displaystyle \lambda\left(F, k^{-1}(1,1,...,1)\right)=\sum_{r\in R}\frac{r!}{k^{r}}|E^{r}(F)|\geq \alpha + \frac{\Delta}{2}$.
	\end{enumerate}
	Note that $\min_{F\in \mathcal{F}}\lambda(F)\geq \alpha+\frac{\Delta}{2}>\alpha$.
	It remains to show that $\pi(\mathcal{F})\leq \alpha$.

	Let $\epsilon>0$ be given.
	Let $G_{n}$ be a graph on $n\geq n_{0}(R, k, \epsilon)$ vertices (enough vertices) with $h_{n}(G)\geq \alpha+\epsilon$.
	We need to show that $G_{n}$ contains a member of $\mathcal{F}$.
	First, by hypothesis that $\alpha$ is a jump (and $G_{n}$ has enough vertices) we know that $G_{n}$ contains a graph $H_{k}$ on $k$-vertices
	with $h_{k}(H_{k})\geq \alpha+\Delta$.
	We will now show that $H_{k}\in \mathcal{F}$.
	First, it is clear that $R(H)\subseteq R$ since $R(G)\subseteq R$.
	\begin{align*}
		\alpha+\Delta &\leq h_{k}(H_{k}) \\
		              &= \sum_{r\in R} \frac{|E^{r}(H_{k})|}{\binom{k}{r}} \\
					  &= \sum_{r\in R} \frac{r!}{k(k-1)...(k-r+1)}|E^{r}(H_{k})| \\
					  &= \sum_{r\in R} \frac{r!}{k^{r}(1-\frac{1}{k})(1-\frac{2}{k})...(1-\frac{r-1}{k})}|E^{r}(H_{k})| \\
					  &\leq \frac{1}{c} \sum_{r\in R}\frac{r!}{k^{r}} |E^{r}(H_{k})|.
	\end{align*}
	
	\noindent Rearranging terms, we have that

	\[\sum_{r\in R}\frac{r!}{k^{r}}|E^{r}(H_{k})| \geq c(\alpha+\Delta) > \alpha+\frac{\Delta}{2}.\]

	\noindent Thus, $H_{k}$ is a member of $\mathcal{F}$.
	Hence $\pi(\mathcal{F})\leq \alpha$ as desired.

	Now, suppose that we have a finite family $\mathcal{F}$ with the properties that
	$R(\mathcal{F})\subseteq R$, $\min_{F\in \mathcal{F}}\lambda(F)>\alpha$, and $\pi(\mathcal{F})\leq \alpha$.
	We need to show that $\alpha$ is a jump for $R$.
	Write $R=\{r_1,...,r_{t}\}$ with $r_1<r_2<...<r_{t}$ and fix $\epsilon>0$ and $k\geq r_{t}$.
	Let $\Delta=\min_{F\in \mathcal{F}} \lambda(F)-\alpha>0$.
	Choose $n_{0}$ large enough that if $n\geq n_{0}$ it follows that each $\lambda(F)$ for $F\in \mathcal{F}$ can be approximated by some vector
	
	\[\frac{\vec{x}_{F}}{n}=\left(\frac{x_1}{n},...,\frac{x_{|F|}}{n}\right)\in S_{|F|}\]
	
	\noindent where each $\vec{x}\in \mathbb{N}^{|F|}$ satisfying $\lambda(F, \frac{\vec{x}_{F}}{n})\geq \alpha + \frac{\Delta}{2}$.
	Now, consider the family $\mathcal{F}^{\prime}$ where
	
	\[\mathcal{F}^{\prime}:=\{F(\vec{x}_{F}):F\in \mathcal{F}\}\]

	\noindent obtained by blowing-up each graph in $\mathcal{F}$ so as to maximize the Lubell value of each graph.
	Since $\mathcal{F}^{\prime}$ is obtained by blowing up graphs in $\mathcal{F}$, it follows that $\pi(\mathcal{F}^{\prime})=\pi(\mathcal{F})\leq \alpha$.
	Suppose that $G$ has $N$ vertices and $h_{N}(G)\geq \alpha+\epsilon$ and $N$ is large enough that $G$ must contain some member of $\mathcal{F}^{\prime}$.
	We will now show that $G$ has some subgraph $H_{k}$ on exactly $k$-vertices with $h_{k}(H_{k})\geq \alpha+\frac{\Delta}{2}$.

	Suppose that $G$ contains $F^{\prime}\in \mathcal{F}^{\prime}$.
	Note that $F^{\prime}$ (by construction) has $n\geq n_{0}$ vertices.
	Consider $G[F^{\prime}]$; we have that $h_{n}(G[F^{\prime}])\geq h_{n}(F^{\prime})\geq \alpha+\frac{\Delta}{2}$.
	Let $K$ be a random $k$-subset of the vertices of $G[F^{\prime}]$.
	Since $\mathbb{E}(h_{k}(G[K]))=h_{n}(G[F^{\prime}])\geq \alpha+\frac{\Delta}{2}$ it follows that there is some $k$-subset of $G[F^{\prime}]$
	satisfying $h_{k}(G[K])\geq \alpha+\frac{\Delta}{2}$.
	Then $G[K]$ is a $k$-vertex subgraph of $G$ with $h_{k}(G[K])\geq \alpha+\frac{\Delta}{2}$; we have found a subgraph with the desired properties.
	Hence $\alpha$ is a jump for $R$.
\end{proof}

\begin{prop}
	If $\alpha$ is a jump for $R$ and there exists an $R$-graph $F$ with $\lambda(F)=\alpha$ then $\alpha$ is a weak jump.
\end{prop}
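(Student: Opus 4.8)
The goal is to show that $\alpha$, which by hypothesis is a jump, fails to be a strong jump; a value that is a jump but not a strong jump is by definition a weak jump, so the entire task reduces to producing a witness for the failure of the strong-jump property. By Corollary \ref{nsj} it suffices to exhibit a single admissible sequence $\mathbf{G}$ of $R$-graphs with $h(\mathbf{G})=\bar h(\mathbf{G})=\alpha$.

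The plan is to take $\mathbf{G}$ to be a sequence of blow-ups of $F$. Fix $\vec{x}^*\in S_{v(F)}$ with $\lambda(F,\vec{x}^*)=\lambda(F)=\alpha$, and for each $N$ choose an integer vector $\vec{s}_N$ with $\sum_i s_{N,i}=N$ and $\vec{s}_N/N\to\vec{x}^*$ (obtained by rounding). Set $G_N:=F(\vec{s}_N)$. The computation carried out just before the definition of the polynomial form in Section 4 shows $h_N(G_N)=\lambda(F,\vec{s}_N/N)+o(1)$, so by continuity of $\lambda(F,\cdot)$ on the simplex, $h_N(G_N)\to\lambda(F,\vec{x}^*)=\alpha$. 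Hence $\mathbf{G}$ is admissible with $h(\mathbf{G})=\alpha$, and in particular $\bar h(\mathbf{G})\ge h(\mathbf{G})=\alpha$.

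It remains to prove the matching bound $\bar h(\mathbf{G})\le\alpha$. The structural observation is that any induced subgraph of a blow-up of $F$ is again a blow-up of $F$: if $T$ is a $t$-subset of $V(F(\vec{s}))$ meeting the $i$-th part in $s'_i$ vertices, then $F(\vec{s})[T]=F(\vec{s}')$ with $\sum_i s'_i=t$. Thus $\sigma_t(\mathbf{G})$ is a supremum of values $h_t(F(\vec{s}'))$ over blow-ups of $F$ on $t$ vertices. For such a realization (recalling $F$ is a genuine hypergraph, so each edge $e$ of $F$ contributes $\prod_{i\in e}s'_i$ edges to $F(\vec{s}')$), using $\binom{t}{r}\ge (t-r+1)^r/r!$ together with $r\le m:=\max R$, one obtains
\[
h_t(F(\vec{s}'))=\sum_{e\in E(F)}\frac{\prod_{i\in e}s'_i}{\binom{t}{|e|}}\le \theta_t^{\,m}\sum_{e\in E(F)}|e|!\prod_{i\in e}\frac{s'_i}{t}=\theta_t^{\,m}\,\lambda(F,\vec{s}'/t)\le\theta_t^{\,m}\,\lambda(F),
\]
where $\theta_t:=t/(t-m+1)\to 1$. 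Letting $t\to\infty$ gives $\bar h(\mathbf{G})=\lim_{t\to\infty}\sigma_t(\mathbf{G})\le\alpha$. Combining with the previous paragraph, $\bar h(\mathbf{G})=h(\mathbf{G})=\alpha$, so by Corollary \ref{nsj} the value $\alpha$ is not a strong jump, and therefore it is a weak jump.

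The only real work is the elementary estimate in the last display: verifying that passing to a sub-blow-up costs only a factor $1+o(1)$ as $t\to\infty$ (equivalently, that the binomial denominators in the Lubell function versus the multinomial coefficients in $\lambda$ match up in the limit), so that the Lagrangian genuinely caps the upper density; everything else is bookkeeping. An alternative packaging that bypasses even this, at the cost of slightly more setup, is to let $\mathcal{G}$ be the family of all $R$-graphs not embeddable into any blow-up of $F$; then an $R$-graph is $\mathcal{G}$-free if and only if it embeds into a blow-up of $F$, whence $\pi^R(\mathcal{G})=\lambda(F)=\alpha$, and Theorem \ref{turan} immediately yields that $\alpha$ is not a strong jump.
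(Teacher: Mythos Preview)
Your proof is correct and follows essentially the same approach as the paper: build an admissible sequence out of blow-ups of $F$ and show that both its density and its upper density equal $\lambda(F)=\alpha$, then invoke Corollary~\ref{nsj}. The paper's proof is terser---it simply takes $F_n$ to be the blow-up on $n$ vertices maximizing $h_n(F_n)$ and asserts $h(\mathbf{F})=\bar h(\mathbf{F})=\lambda(F)$ ``by construction''---whereas you spell out the upper-density bound via the explicit estimate $h_t(F(\vec{s}'))\le\theta_t^{\,m}\lambda(F)$, which is a genuine improvement in rigor. Your alternative route through $\pi^R(\mathcal{G})=\lambda(F)$ and Theorem~\ref{turan} is also valid and does not appear in the paper.
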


\begin{proof}
	We need to show that $\alpha$ is not a strong jump.
	For $n$ larger than $|F|$, let $F_{n}$ be a blow-up of $F$ on $n$ vertices such that $h_{n}(F_{n})$ is as large as possible.
	The Lagrangian is constructed in such a way that $\displaystyle\lim_{n\to\infty}h_{n}(F_{n})=\lambda(F)$.
	Hence we have a sequence $\mathbf{F}=\{F_{n}\}$ with $h(\mathbf{F})=\lambda(F)=\alpha$.
	By construction, we also have that $\bar{h}(\mathbf{F})=\lambda(F)$.
	Hence $\alpha$ is not a strong jump.
\end{proof}

We observe the following easy proposition, which we give without proof.

\begin{prop}
	If $\alpha$ is a jump for $R$ and $\mathcal{F}$ is a finite family of $R$-graphs satisfying the conditions of theorem \ref{t:equiv},
	then every $\beta\in (\pi^{R}(\mathcal{F}), \min_{F\in\mathcal{F}}\lambda(F))$ is a strong jump for $R$.
\end{prop}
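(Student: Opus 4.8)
The plan is to combine Theorem~\ref{t:equiv} with the two structural facts already assembled: that strong jumps form an open set (Proposition~\ref{open}) and that $\pi^R(\mathcal{F})$ is never a strong jump (Theorem~\ref{turan}). Suppose $\alpha$ is a jump and $\mathcal{F}$ is a finite family of $R$-graphs with $\pi^R(\mathcal{F})\le\alpha$ and $m:=\min_{F\in\mathcal{F}}\lambda(F)>\alpha$, so the interval $I=(\pi^R(\mathcal{F}),m)$ is nonempty and contains $\alpha$ in its closure. The goal is to show every $\beta\in I$ is a strong jump.

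First I would fix $\beta\in(\pi^R(\mathcal{F}),m)$ and verify directly that the family $\mathcal{F}$ witnesses the ``moreover'' clause of Theorem~\ref{t:equiv} for $\beta$ rather than for $\alpha$. Indeed, condition (i$'$) holds since $\pi^R(\mathcal{F})<\beta$ by choice of $\beta$, and condition (ii) holds since $\min_{F\in\mathcal{F}}\lambda(F)=m>\beta$, again by choice of $\beta$. The hypothesis $R(\mathcal{F})\subseteq R$ is inherited from the assumption that $\mathcal{F}$ is a family of $R$-graphs. By the strong-jump half of Theorem~\ref{t:equiv}, $\beta$ is therefore a strong jump for $R$. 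Since $\beta$ was an arbitrary point of the open interval $I$, the entire interval $(\pi^R(\mathcal{F}),\min_{F\in\mathcal{F}}\lambda(F))$ consists of strong jumps.

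There is essentially no obstacle here: the proposition is a direct corollary of Theorem~\ref{t:equiv}, with the only subtlety being the careful bookkeeping that the open interval around $\alpha$ is exactly the range of values $\beta$ for which the \emph{same} witnessing family $\mathcal{F}$ satisfies the strict versions of both conditions (i$'$) and (ii) simultaneously. One should note that $\alpha$ itself need not be a strong jump (it may lie on the boundary, with $\pi^R(\mathcal{F})=\alpha$), which is precisely why the proposition excludes the endpoints; this is consistent with Theorem~\ref{turan}, which forbids $\pi^R(\mathcal{F})$ from being a strong jump, and with Proposition~\ref{open}, which would have been contradicted had the open interval failed to consist of strong jumps. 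Hence the statement, and it is why the authors could give it ``without proof.''
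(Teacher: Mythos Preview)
Your argument is correct and is exactly the one-line deduction the paper has in mind: for each $\beta$ in the open interval, the \emph{same} family $\mathcal{F}$ satisfies both (i$'$) and (ii) of Theorem~\ref{t:equiv} at $\beta$, so $\beta$ is a strong jump. The references to Proposition~\ref{open} and Theorem~\ref{turan} are only sanity checks and are not needed for the argument itself, which is why the authors state the proposition without proof.
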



\section{Proof of  Theorem \ref{t:12}} 

We now give the proof of Theorem \ref{t:12}.  
The proof will make extensive use of Theorem \ref{t:equiv}.
Throughout this section we take $R=\{1,2\}$.

	\subsection*{Case: $\alpha\in [0,1)$}
		There is a unique integer $t\geq 2$ such that $\alpha\in [1-\frac{1}{t-1}, 1-\frac{1}{t})$.
		Let $\mathcal{F}=\{K_{1}^{\{1\}}, K_{t}^{\{2\}}\}$.
		If $G$ is an $R$-graph that is $\mathcal{F}$-free, then $G$ is also a $\{2\}$-graph.
		Hence \[\pi^{R}(\mathcal{F})=\pi^{\{2\}}(K_{t}^{\{2\}})=1-\frac{1}{t-1}\leq \alpha.\]
		We note that $\lambda(K_{1}^{\{1\}})=1$.
		We now compute $\lambda(K_{t}^{\{2\}})$.
		
		\begin{align*}
		\lambda(K_{t}^{\{2\}}) &= \max_{\vec{x}\in S_{t}} \lambda(K_{t}^{\{t\}},\vec{x}) \\
        		               &= \max_{\vec{x}\in S_{t}} \sum_{1\leq i<j\leq t} 2x_{i}x_{j} \\
							   &= \sum_{1\leq i<j\leq t}\frac{2}{t^{2}} \\
							   &= \binom{t}{2}\frac{2}{t^{2}} \\
							   &= \frac{t-1}{t} \\
							   &> \alpha.
		\end{align*}
		Note that the third line follows since every vertex is equivalent (see Lemma \ref{l:vertequiv}).
		Hence, by theorem \ref{t:equiv} we have that $\alpha$ is a jump.
		Furthermore, we have that for all $t\geq 2$ the value $1-\frac{1}{t-1}$ is a weak jump and every other value in $[0,1)$ is a strong jump.

	\subsection*{Case: $\alpha\in[1,\frac{9}{8})$}
		Let $F$ be a chain on two vertices, i.e. edges $\{1\}$ and $\{1,2\}$.
		Since $F$ is a chain (of length 2), $\pi^{R}(F)=1\leq \alpha$.
		We now compute $\lambda(F)$.
		\begin{align*}
			\lambda(F) &= \max_{\vec{x}\in S_{2}} \lambda(F, \vec{x}) \\
			           &= \max_{\vec{x}\in S_{2}} x_{1}+2x_{1}x_{2} \\
					   &= \max_{x_1\in [0,1]}x_{1}+2x_{1}(1-x_{1}) \\
					   &= \frac{9}{8}.
		\end{align*}
		Note that $1$ is not a strong jump by theorem \ref{turan} since $\pi^{R}(F)=1$.
		Also, note that $\frac{9}{8}$ is not a strong jump since $\lambda(F)=\frac{9}{8}$.

	\subsection*{Case: $\alpha\in [\frac{9}{8}, \frac{5}{4})$}
		Note that there is a unique $t\geq 3$ such that $\alpha\in [\frac{5}{4}-\frac{1}{4(t-1)}, \frac{5}{4}-\frac{1}{4t})$.
		Let $K^{\ast}_{t}$ denote the $\{1,2\}$-graph with vertex set $V=[t]$ and edge set $E=\{1\} \cup \binom{[t]}{2}$.
		First, we will show that $\pi (K^{\ast}_{t}, K_{2}^{\{1,2\}}) \leq \frac{5}{4} - \frac{1}{4(t-1)}$ (for $t\geq 3$).

		Let $G_{n}$ be any graph on $n$ vertices which forbids both $K^{\ast}_{t}$ and $K_{2}^{\{1,2\}}$.
		Partition the vertex set of $G_{n}$ into two sets $X$ and $\bar{X}$ where a vertex $v\in X$ if and only if $\{v\}\in E$.
		We will say that $|X|=xn$ and $|\bar{X}|=(1-x)n$.
		For each $v\in X$ define the set $N_{v}$ to be the set of vertices $u\in \bar{X}$ such that $\{v,u\}\in E$.
		Since $G_{n}$ is $K^{\ast}_{t}$-free, it follows that for each $v\in X$ the graph $G[N_{v}]$ is $K^{\{2\}}_{t-1}$-free.

		Note that $\pi(K_{t-1}^{\{2\}})=1-\frac{1}{t-2}$ so the number of edges in $G[N_{v}]$ is at most $\left(1-\frac{1}{t-2}\right)\binom{|N_{v}|}{2}+o(1)$.
		In other words, in $G[N_{v}]$ there are at least $\frac{1}{t-2}\binom{|N_{v}|}{2}-o(1)$ non-edges.
		Fix $v\in X$ such that $|N_{v}|$ is as large as possible, and say that $|N_{v}|=\alpha(1-x)n$.
		We then have that
		\begin{align*}
			h_{n}(G_{n}) &\leq \frac{xn}{\binom{n}{1}} + \frac{ \binom{(1-x)n}{2} + (xn)\alpha(1-x)n - \frac{1}{t-2}\binom{\alpha(1-x)n}{2}}{\binom{n}{2}} \\
            			 &= x + (1-x)^2 + 2\alpha x(1-x) - \frac{\alpha^2 (1-x)^2}{t-2} + o(1) \\
					     &= x + 2\alpha x(1-x) + (1-x)^2\left(1-\frac{\alpha^2}{t-2}\right) + o(1) \\
					     &\leq \max_{x\in [0,1], \alpha\in [0,1]} x+2\alpha x(1-x)+(1-x)^2\left(1-\frac{\alpha^{2}}{t-2}\right) + o(1) \\
					     &= \frac{5}{4} - \frac{1}{4(t-1)}+o(1).
		\end{align*}
		The last line of the inequality above is achieved when $x=\frac{t}{2(t-1)}$ and $\alpha=1$.
		With that observation, we have actually proven that $\pi(K_{t}^{\ast}, K_{2}^{\{1,2\}})=\frac{5}{4}-\frac{1}{4(t-1)}$.

		We now show that $\lambda(K_{2}^{\{1,2\}})=\frac{3}{2}>\alpha$.
		\begin{align*}
			\lambda(K_{2}^{\{1,2\}}) &= \max_{\vec{x}\in S_{2}} \lambda(K_{2}^{\{1,2\}}, \vec{x}) \\
			                         &= \max_{\vec{x}\in S_{2}} x_1+x_2+2x_1 x_2 \\
									 &= 1 + 2\left(\frac{1}{2}\right)\left(\frac{1}{2}\right) \\
									 &= \frac{3}{2}.
		\end{align*}
		Note again that the third line follows since the two vertices are equivalent.
		Now, we bound $\lambda(K_{t}^{\ast})$.

		Note that
			\[\lambda(K_{t}^{\ast}, \vec{x}) = x_1 + \sum_{1\leq i<j\leq t} 2x_{i}x_{j}.\]
		To get a lower bound on $\lambda(K_{t}^{\ast})$, choose $x_{1}=\frac{t+1}{2t}$ and $x_{i}=\frac{1}{2t}$ for $2\leq i\leq t$.
		It follows that 
		\begin{align*}
			\lambda(K_{t}^{\ast}) &\geq \frac{t+1}{2t} + 2\left(\frac{t+1}{2t}\right)\sum_{i=2}^{t}\frac{1}{2t} + \sum_{2\leq i<j\leq t}\left(\frac{1}{2t}\right)^2 \\
	    		                  &= \frac{t+1}{2t} + 2\left(\frac{t+1}{2t}\right)\left(\frac{t-1}{2t}\right)+2\binom{t-1}{2}\left(\frac{1}{2t}\right)^2 \\
								  &= \frac{5}{4}-\frac{1}{4t} \\
								  &> \alpha.
		\end{align*}
		Hence by theorem \ref{t:equiv} $\alpha$ is a jump for $R$.
		By the same arguments we've previously seen, the interior of the interval: $[\frac{5}{4}-\frac{1}{4(t-1)}, \frac{5}{4}-\frac{1}{4t})$ are strong jumps, and the endpoints are weak jumps.

	\subsection*{Case: $\alpha\in [\frac{5}{4}, \frac{3}{2})$}
		Note that $\pi^{R}(K_{2}^{\{1,2\}})=\frac{5}{4}\leq \alpha$.
		(This is a result from \cite{tnuh}.)
		Additionally, we already saw that $\lambda(K_{2}^{\{1,2\}})=\frac{3}{2}>\alpha$.
		Hence $\alpha$ is a jump.
		Again, $\frac{5}{4}$ and $\frac{3}{2}$ are weak jumps, and the interval $(\frac{5}{4},\frac{3}{2})$ is comprised of strong jumps.

		Finally, suppose that $\alpha\in [\frac{3}{2}, 2)$.
		Then there is a unique $t\geq 3$ such that $\alpha\in [2-\frac{1}{t-1}, 2-\frac{1}{t})$.
		Note that $\pi^{R}(K_{t}^{\{1,2\}})=2-\frac{1}{t-1}$; again, this is a result from \cite{tnuh}.
		Furthermore, every vertex is equivalent.
		Thus
		\begin{align*}
			\lambda(K_{t}^{\{1,2\}}) &= \max_{\vec{x}\in S_{t}} \lambda(K_{t}^{\{1,2\}}) \\
            			             &= \max_{\vec{x}\in S_{t}} \sum_{i=1}^{t} x_{i} + \sum_{1\leq i<j\leq t}2x_{i}x_{j} \\
									 &= 1 + 2\binom{t}{2}\left(\frac{1}{t}\right)^{2} \\
									 &= 1 + \frac{t-1}{t} \\
									 &= 2 - \frac{1}{t} \\
									 &> \alpha.
		\end{align*}
		Hence $\alpha$ is a jump.
		As before, $2-\frac{1}{t-1}$ and $2-\frac{1}{t}$ are weak jumps, and the interval $(2-\frac{1}{t-1}, 2-\frac{1}{t})$ is comprised of strong jumps.

\section{Hereditary Properties and Future Direction}

In this last section, we will reveal the relationship between the non-strong-jump values and hereditary properties.
Hereditary properties have been well-studied for graphs and $r$-uniform
hypergraphs \cite{BT95, BT97, DN, nikiforov1}.
This concept can be naturally extended to $R$-graphs.
A \textbf{property} of $R$-graphs is a family of $R$-graphs closed under isomorphism.
A property is called \textbf{hereditary} if it is closed under taking induced subgraphs.
A typical hereditary property can be obtained by forbidding a set of $R$-graphs as induced sub-hypergraphs.
Given a hereditary property $\mathcal{P}$ of $R$-graphs, let $\mathcal{P}_n$
be the set of $R$-graphs in $\cal P$ with $n$ vertices, and set
\[\pi_n(\mathcal{P})=\max_{G\in \mathcal{P}_n} h_n(G),\]
where $h_n(G)$ is the Lubell value of $G$.
We have the following proposition.
\begin{prop}\label{p7}
	For any hereditary property $\cal P$ of $R$-graphs, the limit 
	$\lim_{n\to \infty} \pi_n({\cal P})$ always exists.
\end{prop}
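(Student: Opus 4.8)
The plan is to show that the sequence $\pi_n(\mathcal{P})$ is non-increasing in $n$ and bounded, so that the limit exists by the monotone convergence theorem. The crucial observation is that the Lubell function is \emph{exactly} invariant under averaging over induced subgraphs on one fewer vertex, just as edge density is in the uniform case.

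First I would establish the averaging identity. Let $G$ be an $R$-graph on the vertex set $[n]$ and let $T$ be a uniformly random $(n-1)$-element subset of $[n]$. For an edge $e\in E(G)$ with $|e|=k$, the number of $(n-1)$-subsets containing $e$ is $\binom{n-k}{n-1-k}=n-k$, so $\Pr[e\subseteq T]=\frac{n-k}{n}$. Combining this with the one-line identity $\frac{n-k}{n}\binom{n-1}{k}^{-1}=\binom{n}{k}^{-1}$ and using linearity of expectation yields
\[\mathbb{E}\bigl[h_{n-1}(G[T])\bigr]=\sum_{e\in E(G)}\Pr[e\subseteq T]\cdot\frac{1}{\binom{n-1}{|e|}}=\sum_{e\in E(G)}\frac{1}{\binom{n}{|e|}}=h_n(G).\]

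Next I would deduce monotonicity. Pick $G\in\mathcal{P}_n$ achieving $h_n(G)=\pi_n(\mathcal{P})$. Since the average of $h_{n-1}(G[T])$ over all $(n-1)$-subsets $T$ equals $\pi_n(\mathcal{P})$, some particular $T$ satisfies $h_{n-1}(G[T])\geq\pi_n(\mathcal{P})$. As $\mathcal{P}$ is hereditary, $G[T]\in\mathcal{P}_{n-1}$, and therefore $\pi_{n-1}(\mathcal{P})\geq h_{n-1}(G[T])\geq\pi_n(\mathcal{P})$. Hence $\{\pi_n(\mathcal{P})\}$ is non-increasing; it is bounded below by $0$ (a sum of positive terms) and above by $|R|$, so $\lim_{n\to\infty}\pi_n(\mathcal{P})$ exists.

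The argument is essentially routine — it mirrors the proof that Tur\'an densities of $R$-graphs exist, given in \cite{tnuh}. The only thing needing care is the degenerate bookkeeping: one must ensure $\pi_n(\mathcal{P})$ is well defined. But heredity forces $\mathcal{P}_{m}\neq\emptyset$ whenever $\mathcal{P}_n\neq\emptyset$ and $m\leq n$, so either $\mathcal{P}$ contains $R$-graphs of every order (and the above applies verbatim) or there is a maximum order and the statement is vacuous. I do not anticipate any real obstacle beyond correctly verifying the binomial identity and the probability $\Pr[e\subseteq T]$.
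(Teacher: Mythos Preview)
Your proposal is correct and is precisely the averaging argument the paper has in mind: the authors omit the proof entirely, remarking that it is identical to the Katona--Nemetz--Simonovits argument used in \cite{tnuh} to show that Tur\'an densities of $R$-graphs exist. Your verification of the identity $\mathbb{E}[h_{n-1}(G[T])]=h_n(G)$ and the resulting monotonicity $\pi_{n-1}(\mathcal{P})\geq\pi_n(\mathcal{P})$ is exactly that argument spelled out.
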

The limit, $\pi(\mathcal{P})$, is called the density of $\mathcal{P}$: 
\[\pi(\mathcal{P})=\lim_{n\to\infty}\pi_n(\mathcal{P}).\]
This proposition can be proved using the average argument, first shown in
Katona-Nemetz-Simonovit’s theorem \cite{KNS} for the existence of the
Tur\'an density of any $r$-uniform hypergraph. For non-uniform
hypergraphs, the proof
of Proposition \ref{p7} is actually
identical to the proof of existence of the Tur\'an density $\pi(\mathcal{H})$
(see Theorem 1 in \cite{tnuh}), and is omitted here. 
\begin{theorem}\label{t7}
	For any fixed set $R$ of finite positive integers, a value $\alpha \in [0, |R|]$ is not a strong jump
	for $R$ if and only if there exists a hereditary property $\mathcal{P}$ of $R$-graphs such that
	$\pi(\mathcal{P})=\alpha$.
\end{theorem}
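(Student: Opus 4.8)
The plan is to prove both directions by relating hereditary properties to admissible sequences and invoking Corollary \ref{nsj}, which characterizes non-strong-jump values as exactly those densities achievable as $\alpha = h(\mathbf{G}) = \bar h(\mathbf{G})$ for some $R$-admissible sequence.

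For the easier direction, suppose $\mathcal{P}$ is a hereditary property with $\pi(\mathcal{P}) = \alpha$. By Proposition \ref{p7} and the definition of $\pi_n(\mathcal{P})$, for each $n$ pick $G_n \in \mathcal{P}_n$ with $h_n(G_n) = \pi_n(\mathcal{P})$; after passing to a subsequence (or using that $\pi_n(\mathcal{P}) \to \alpha$) we get an admissible sequence $\mathbf{G} = \{G_n\}$ with $h(\mathbf{G}) = \alpha$. Because $\mathcal{P}$ is hereditary, every induced subgraph on $t$ vertices of every $G_n$ lies in $\mathcal{P}_t$, so $h_t(G_n[T]) \le \pi_t(\mathcal{P})$; hence $\sigma_t(\mathbf{G}) \le \pi_t(\mathcal{P})$ and $\bar h(\mathbf{G}) \le \lim_t \pi_t(\mathcal{P}) = \alpha$. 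Since always $\bar h(\mathbf{G}) \ge h(\mathbf{G})$, we conclude $\bar h(\mathbf{G}) = h(\mathbf{G}) = \alpha$, so by Corollary \ref{nsj} $\alpha$ is not a strong jump. (One should handle the degenerate endpoint $\alpha = |R|$ separately, but this is immediate.)

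For the converse, suppose $\alpha$ is not a strong jump. By Corollary \ref{nsj} there is an admissible sequence $\mathbf{G} = \{G_{n_i}\}$ with $h(\mathbf{G}) = \bar h(\mathbf{G}) = \alpha$. The natural candidate is to let $\mathcal{P}$ be the hereditary closure of $\{G_{n_i}\}$, i.e. all $R$-graphs isomorphic to an induced subgraph of some $G_{n_i}$ (together with all $R$-graphs on fewer than $\max R$ vertices, to be safe). This is hereditary by construction. We must check $\pi(\mathcal{P}) = \alpha$. The inequality $\pi(\mathcal{P}) \ge \alpha$ follows because, for any $t$, some $G_{n_i}$ has an induced $t$-vertex subgraph of density close to $\sigma_t(\mathbf{G})$, which lies in $\mathcal{P}_t$, so $\pi_t(\mathcal{P}) \ge \sigma_t(\mathbf{G}) - o(1)$, giving $\pi(\mathcal{P}) \ge \bar h(\mathbf{G}) = \alpha$; also every $G_{n_i}$ itself lies in $\mathcal{P}$, so $\pi_{n_i}(\mathcal{P}) \ge h_{n_i}(G_{n_i}) \to \alpha$. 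For the reverse inequality $\pi(\mathcal{P}) \le \alpha$: any $H \in \mathcal{P}_n$ is (by definition) an induced subgraph of some $G_{n_i}$ with $n_i \ge n$, hence $h_n(H) \le \sigma_n(\mathbf{G})$, so $\pi_n(\mathcal{P}) \le \sigma_n(\mathbf{G})$ and $\pi(\mathcal{P}) \le \lim_n \sigma_n(\mathbf{G}) = \bar h(\mathbf{G}) = \alpha$.

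The main obstacle is the converse's bookkeeping around which graphs to include in $\mathcal{P}$: one must make sure $\mathcal{P}$ is genuinely closed under induced subgraphs (the hereditary closure of a set of graphs automatically is), that it is nonempty on every vertex count $n$ (adding all small $R$-graphs, or all induced subgraphs of all $G_{n_i}$, achieves this), and that padding $\mathcal{P}$ with extra small or sparse graphs does not push $\pi_n(\mathcal{P})$ above $\alpha$ — which is fine since for $n$ larger than $\max R$ every extra graph we might add is still an induced subgraph of some $G_{n_i}$, and for small $n$ the Lubell value is bounded by a quantity tending to $0$ relative to the limit anyway. A clean way to avoid all edge cases is simply to define $\mathcal{P} = \{H : H \text{ is an induced subgraph of } G_{n_i} \text{ for some } i\}$ and note $G_{n_i} \in \mathcal{P}$ forces the nonemptiness for infinitely many $n$, then use monotonicity of $\pi_n(\mathcal{P})$ (established in Proposition \ref{p7}) to pass to the limit. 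I would also remark that this $\mathcal{P}$ can alternatively be described as the property of forbidding, as induced subgraphs, all $R$-graphs that never appear inside any $G_{n_i}$, connecting the statement back to the ``forbidden induced subgraph'' description of hereditary properties mentioned just before the theorem.
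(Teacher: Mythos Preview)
Your proposal is correct and follows essentially the same approach as the paper: both directions go through Corollary~\ref{nsj}, the forward direction takes extremal $G_n \in \mathcal{P}_n$ and uses heredity to bound $\bar h(\mathbf{G})$, and the converse takes the hereditary closure of an admissible sequence witnessing $h(\mathbf{G}) = \bar h(\mathbf{G}) = \alpha$. In fact you are more careful than the paper in the converse, explicitly verifying both inequalities $\pi(\mathcal{P}) \ge \alpha$ and $\pi(\mathcal{P}) \le \alpha$ via $\pi_n(\mathcal{P}) \le \sigma_n(\mathbf{G})$, whereas the paper simply asserts $\pi(\mathcal{P}) = \alpha$ from $\bar h(\mathbf{G}) = \alpha$; your extra bookkeeping about nonemptiness and small-$n$ padding is unnecessary but harmless.
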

\begin{proof}
By Corollary \ref{nsj}, $\alpha$ is not a strong jump for $R$ if and only if there exists an admissible
sequence of $R$-graphs $\mathbf{G}:=\{G_{n_i}\}_{i=1}^\infty$ satisfying $h(\mathbf{G})=\bar h(\mathbf{G})=\alpha$.

Now we show that it is a sufficient condition. 
Consider a hereditary property $\mathcal{P}$ with $\pi(\mathcal{P})=\alpha$.
Let $G_n\in\mathcal{P}_n$ be an $R$-graph achieving the maximum Lubell value, and $\mathbf{G}:=\{G_n\}$. 
By definition of $\pi(\mathcal{P})$, we have
\[h({\bf G})=\pi({\cal P})=\alpha.\]
Since $\mathcal{P}$ is hereditary, any induced subgraphs of $\mathbf{G}$ are still in $\mathcal{P}$.
Thus
\[\bar h(\mathbf{G})\leq \pi({\cal P})=\alpha.\]
Since $\bar h(\mathbf{G})\geq h(\mathbf{G})$, it forces $\bar h(\mathbf{G})=h(\mathbf{G})=\alpha$.
By Corollary \ref{nsj}, $\alpha$ is not a strong jump for $R$.

Now we show that it is also a necessary condition.
We define a property
\[\mathcal{P}:=\{H\colon H \text{ is an induced subgraph of } G_n \in \mathbf{G}\}.\]
It is clear that $\mathcal{P}$ is hereditary.
Since $\bar h(\mathbf{G})=\alpha$, we have $\pi(\mathcal{P})=\alpha$. 
\end{proof}

For $r$-uniform hypergraphs, Nikiforov \cite{nikiforov1} recently proved an
important result related to the $p$-spectrum and the density $\pi(\mathcal{P})$:
for any hereditary property $\mathcal{P}$ of $r$-graphs, and any $p>1$, $\lambda^{p}(\mathcal{P})=\pi(\mathcal{P})$.
Readers are referred to a recent survey paper \cite{nikiforov2} for
the exact definition of $p$-spectrum and $\lambda^{p}(\mathcal{P})$.
Our future work is to generalize the $p$-spectrum to non-uniform hypergraphs and study the spectral
characterization of the non-jump values. 
Another future task is to discover more non-jump values (even for the $r$-uniform hypergraphs). 
Some work has been done along this line. 
Frankl-Peng-R\"odl-Talbot \cite{FrPeRoTa07} proved that $\frac{5}{9}$ is a non-jump value for $3$ and made the
following conjecture:

\begin{center}
\begin{minipage}{0.9\linewidth}
\textbf{Conjecture (see \cite{FrPeRoTa07}):} 
\textit{For any $l\geq 3$, $s \geq 1$, and $l\geq s+1$, the value 
 $1-\frac{3}{l}+\frac{3s+2}{l^2}$ is a non-jump value for $r=3$.}
\end{minipage}
\end{center}
The value $\frac{5}{9}$ corresponds to $l=3$ and $s=1$.
They showed the conjecture holds for all $l\geq 9s+6$.
We \cite{Lag-UNI} made some progress on this conjecture and found several new non-jump values for $r=3$.

\end{document}